\DeclareMathOperator{\Dist}{dist}
\DeclareMathOperator{\Div}{div}
\newcommand{\AP}{A_{\Vert}}
\newcommand{\Z}{\mathbb{Z}}
\newcommand{\supp}{\mathrm{supp}}
\newcommand{\EPS}{\varepsilon}
\newcommand{\ColorWord}[2]{\color{#1} #2 \color{black} }
\numberwithin{equation}{section}
\theoremstyle{plain}
\newtheorem{thm}[equation]{Theorem}
\newcommand{\refthm}[1]{\emph{\ColorWord{blue}{Theorem} \ref{#1}}}
\newtheorem{lemma}[equation]{Lemma}
\newcommand{\reflemma}[1]{\emph{\ColorWord{blue}{Lemma} \ref{#1}}}
\newtheorem{prop}[equation]{Proposition}
\newcommand{\refprop}[1]{\emph{\ColorWord{blue}{Proposition} \ref{#1}}}
\newtheorem{cor}[equation]{Corollary}
\newcommand{\refcor}[1]{\emph{\ColorWord{blue}{Corollary} \ref{#1}}}
\theoremstyle{definition}
\newtheorem{defin}[equation]{Definition}
\newcommand{\refdef}[1]{\emph{Definition \ref{#1}}}
\theoremstyle{remark}
\newtheoremstyle{named}{}{}{\itshape}{}{\bfseries}{}{.5em}{#1 #3}
\theoremstyle{named}
\title{Solvability of the Dirichlet problem using a weaker Carleson condition in the upper half plane}
\author{Martin Ulmer}
\date{\today}
\begin{document}

\maketitle

\begin{abstract}
\noindent
We study an elliptic operator \(L:=\mathrm{div}(A\nabla \cdot)\) on the upper half plane \(\mathbb{R}^2_+\). There are several conditions on the behavior of the matrix \(A\) in the transversal \(t\)-direction that yield \(\omega\in A_\infty(\sigma)\). These include the \(t\)-independence condition, a mixed \(L^1-L^\infty\) condition on \(\partial_t A\), and Dini-type conditions. We introduce an \(L^1\) Carleson condition on \(\partial_t A(x,t)\) that extends the class of elliptic operators for which we have  \(\omega\in A_\infty(\sigma)\), i.e. solvability of the \(L^p\) Dirichlet problem for some \(1<p<\infty\).  
\end{abstract}

\tableofcontents

\section{Introduction}

In this work let \(\Omega:=\mathbb{R}^{2}_+:=\mathbb{R}\times (0,\infty)\) be the upper half space and \(L:=\Div(A\nabla\cdot)\) an uniformly elliptic operator, i.e. there exists \(\lambda_0>0\) such that
\begin{align}
    \lambda_0 |\xi|^2\leq \xi^T A(x,t) \xi \leq \lambda_0^{-1}|\xi|^2 \qquad \textrm{ for all }\xi\in \mathbb{R}^{2},\label{eq:DefinitionOfUniformElliptic}
\end{align}
and a.e. \((x,t)\in \mathbb{R}^{2}_+\). Furthermore assume that the coefficients of \(A\) are bounded and merely measurable. We are interested in the solvability of the \(L^p\) Dirichlet boundary value problem 
\[\begin{cases} Lu=\Div(A\nabla u)=0 &\textrm{in }\Omega, \\ u=f &\textrm{on }\partial\Omega,\end{cases}\]
where \(f\in L^p(\partial\Omega)\) (see \refdef{def:L^pDirichletProblem}). It is well known that solvability for some \(1< p<\infty\) is equivalent to the elliptic measure \(\omega_L\) lying in the Muckenhoupt space \(A_\infty(\sigma)\). This Muckenhoupt space yields a version of scale-invariant absolute continuity between the elliptic measure \(\omega\) and the surface measure \(\sigma\). Due to the counterexamples in \cite{caffarelli_completely_1981} and \cite{modica_construction_1980} we do not always have absolute continuity between \(\omega\) and \(\sigma\), even if we assume that the coefficients of \(A\) are continuous. In fact, these examples show that some regularity on the coefficients in transversal direction is necessary to obtain absolute continuity. This observation gave rise to the study of so called \textit{t-independent} elliptic operators \(L\), i.e. operators where \(A(x,t)=A(x)\) is independent of the transversal direction. The first breakthrough in this direction came in \cite{jerison_dirichlet_1981}, where Jerison and Kenig used a "Rellich" identity to show that if \(A\) is symmetric and t-independent with bounded and measurable coefficients on the unit ball then \(\omega\in B_2(\sigma)\subset A_\infty(\sigma)\). Although this "Rellich" identity could not be transferred to the case of nonsymmetric matrices \(A\), the work \cite{kenig_new_2000} was able to establish \(\omega\in A_\infty(\sigma)\) for nonsymmetric operators in the upper half plane \(\mathbb{R}^2_+\) after about 20 years. Additionally, they give a counterexample why \(\omega\in B_2(\sigma)\) cannot be expected for nonsymmetric matrices and the space \(A_\infty(\sigma)\) is sharp in that case. 

\medskip

For dimension \(n\) however, it took until the Kato conjecture was resolved in \cite{Auscher_Kato} after being open for 50 years, before the authors of \cite{hofmann_square_2014} could extend this result to matrices that are not necessarily symmetric and have merely bounded and measurable coefficients. Later, this work got streamlined in \cite{hofmann_dirichlet_2022}, where the authors also extend the result to the case of matrices whose antisymmetric part might be unbounded and instead has a uniformly bounded BMO norm of its antisymmetric part. The \(t\)-independence condition could also be adapted for the parabolic setting as a sufficient condition for solvability of the \(L^p\) Dirichlet problem (see \cite{auscher_dirichlet_2018}) and was also established for the elliptic regularity problem as sufficient condition for solvability (see \cite{hofmann_regularity_2015}). 

\medskip

A little while after the breakthrough \cite{jerison_dirichlet_1981} the same authors together with Fabes published \cite{fabes_necessary_1984}, where they showed that \(t\)-independence can be relaxed if we use continuous coefficients. More precisely, they show that if a symmetric \(A\) has continuous coefficients, \(\Omega\) is a bounded \(C^1\)-domain, and the modulus of continuity 
\[\eta(s)=\sup_{P\in \partial\Omega, 0<r<s}|A_{ij}(P-rV(P))-A_{ij}(P)|\]
with outer normal vector field \(V\) satisfies the Dini-type condition
\begin{align}\int_0\frac{\eta(s)^2}{s}ds<\infty,\label{DiniTypeCond}\end{align}
then \(\omega\in B_2(\sigma)\subset A_\infty(\sigma)\). Together with the counterexample in \cite{jerison_dirichlet_1981}, where for a given \(\eta\) with \(\int_0 \frac{\eta(s)^2}{s}ds=+\infty\) the authors construct completely singular measures \(\omega\) with respect to the surface measure, the Dini type condition (\ref{DiniTypeCond}) turns out to be sufficient for \(\omega\in A_\infty(d\sigma)\) and kind of necessary, if \(A\) is symmetric with continuous bounded coefficients. Necessity here does not mean that every elliptic operator whose matrix does not satisfy the Dini-type condition \eqref{DiniTypeCond} cannot have an elliptic measure in \(A_\infty(\sigma)\), and hence this is not a classification of solvability. A little bit later, \cite{dahlberg_absolute_1986} extended this result to symmetric matrices with merely bounded and measurable coefficients by considering perturbations from continuous matrices. 

\medskip

The third condition that leads to an elliptic measure in the Muckenhoupt class \(A_\infty(\sigma)\) is an \(L^1-L^\infty\) condition that appears in \cite{ulmer_mixed_2024}. This condition states that if \(|\partial_t A(x,t)|<\frac{c}{t}\) for a.e. \((x,t)\in \mathbb{R}^{n+1}_+\) and
\begin{align}\int_0^\infty\Vert\partial_t A(\cdot,t)\Vert_{L^\infty(\mathbb{R}^n)}dt<\infty,\label{L1-LInftyCond}\end{align}
then \(\omega\in A_\infty(\sigma)\). This condition generalizes the \(t\)-independence condition and hence extends the class of elliptic operators with \(\omega\in A_\infty(\sigma)\). Interestingly, this condition is also different from the Dini-type condition \eqref{DiniTypeCond} with examples that satisfy \eqref{L1-LInftyCond} but not \eqref{DiniTypeCond} and vice versa (cf. \cite{ulmer_mixed_2024}). We would like to point out that the Dini-type condition \eqref{DiniTypeCond} has not yet been shown on the unbounded domain that we are considering here or for non symmetric matrices, and this comparison illustrates that \eqref{L1-LInftyCond} covers indeed a different class of elliptic operators.
\medskip

However, the question of full classification of matrices for which we obtain \(\omega\in A_\infty(\sigma)\) remains open, even for smooth coefficients and symmetric matrices. In trying to make progress towards this question we only look at the case of \(n=1\), i.e. the upper half plane \(\Omega:=\mathbb{R}^{2}_+\). We assume matrices with bounded and merely measurable coefficients which might be non symmetric and introduce the \(L^1\) Carleson condition on \(\partial_t A(x,t)\) as
\begin{align}\sup_{\Delta\subset\partial\Omega \textrm{ boundary ball}}\frac{1}{\sigma(\Delta)}\int_{T(\Delta)}\sup_{(y,s)\in B(x,t,t/2)}|\partial_s A(y,s)| dx dt\leq C<\infty.\label{condition:L^1Carlesontypecond}\end{align}

Now we can state the main theorem of this work.
\begin{thm}\label{thm:MainTheorem}
Let \(L:=\Div(A\nabla\cdot)\) be an elliptic operator satisfying \eqref{eq:DefinitionOfUniformElliptic} on \(\Omega=\mathbb{R}^{2}_+\). If condition \eqref{condition:L^1Carlesontypecond} is satisfied and \(|\partial_t A|t\leq C<\infty\), then \(\omega\in A_\infty(d\sigma)\), i.e. the \(L^p\) Dirichlet boundary value problem for \(L\) is solvable for some \(1<p<\infty\).
\end{thm}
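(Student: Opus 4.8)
The plan is to use the well-known reduction (going back to Kenig–Kanig–Pipher, and used in \cite{ulmer_mixed_2024}) that $\omega\in A_\infty(\sigma)$ follows once one proves that every bounded weak solution $u$ of $Lu=0$ in $\Omega$ satisfies a Carleson-measure estimate of the form
\[
\sup_{\Delta}\frac{1}{\sigma(\Delta)}\int_{T(\Delta)}|\nabla u(x,t)|^2\, t\,dx\,dt \;\lesssim\; \|u\|_{L^\infty}^2,
\]
equivalently a "$\varepsilon$-approximability" or an $S<N$ type bound. So the first step is to state this reduction precisely as a black box and reduce the theorem to establishing this square-function/Carleson bound under hypotheses \eqref{condition:L^1Carlesontypecond} and $|\partial_t A|\,t\le C$.

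The second step, which is the technical heart, is to split the matrix near a given Carleson box $T(\Delta)$ into a $t$-independent part and an error. Concretely, freeze the transversal variable: write $A(x,t)=A_0(x)+\bigl(A(x,t)-A(x,0^+)\bigr)=:A_0(x)+\mathcal{E}(x,t)$, and note that by the fundamental theorem of calculus $|\mathcal{E}(x,t)|\le\int_0^t|\partial_s A(x,s)|\,ds$, so that, combined with the pointwise bound $|\partial_t A|\,t\le C$, the quantity $\frac{|\mathcal{E}(x,t)|^2}{t}$ is controlled by $\bigl(\sup_{B(x,t,t/2)}|\partial_s A|\bigr)\cdot C$, whence \eqref{condition:L^1Carlesontypecond} says exactly that $\frac{|\mathcal{E}(x,t)|^2}{t}\,dx\,dt$ (and also $|\partial_t A(x,t)|^2\,t\,dx\,dt$ after absorbing one power of the pointwise bound) is a Carleson measure on $\Omega$. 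Thus $L$ is a Carleson perturbation, in the sense of the Dahlberg–Fefferman–Kenig–Pipher perturbation theory (and its refinements in \cite{hofmann_square_2014}, \cite{fefferman_theory_1991}), of the $t$-independent symmetric-or-nonsymmetric operator $L_0:=\Div(A_0\nabla\cdot)$ on $\mathbb{R}^2_+$, for which $\omega_{L_0}\in A_\infty(\sigma)$ is already known by \cite{kenig_new_2000} (nonsymmetric, $n=1$) / \cite{hofmann_dirichlet_2022}. Invoking the perturbation theorem then yields $\omega_L\in A_\infty(\sigma)$.

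I would organize the core estimate as follows. Let $u$ solve $Lu=0$ and let $u_0$ solve $L_0 u_0=0$ with the same boundary data; then $L_0(u-u_0)=\Div(\mathcal{E}\nabla u)=:\Div(F)$ with $|F|\le|\mathcal{E}|\,|\nabla u|$. A Caccioppoli/interior-estimate argument bounds $|\nabla u(x,t)|$ pointwise by $t^{-1}$ times a local $L^\infty$ norm of $u$, or by a local $L^2$ average of $\nabla u$; plugging this into the Carleson-measure property of $\frac{|\mathcal{E}|^2}{t}\,dx\,dt$ gives that $\iint_{T(\Delta)}|F|^2\,t^{-1}\,dx\,dt$ is dominated by $\|u\|_\infty^2\,\sigma(\Delta)$ — this is the key smallness input. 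One then runs the standard "good-$\lambda$ / stopping-time plus square-function" machinery: use the known $A_\infty$ property of $\omega_{L_0}$ to get the square-function estimate for $u_0$, control $u-u_0$ via the $L_0$-Green representation against $\Div F$ together with the Carleson bound on $F$, and conclude the square-function/Carleson estimate for $u$. I expect the main obstacle to be the bookkeeping in this last stopping-time argument in the non-symmetric setting: one cannot use a Rellich identity, so the perturbation must be handled purely through $A_\infty(\sigma)$ of the comparison operator plus the Carleson-measure smallness of $\mathcal{E}$, and one must be careful that the pointwise constraint $|\partial_t A|\,t\le C$ is genuinely used (it is what converts the $L^1$-Carleson hypothesis \eqref{condition:L^1Carlesontypecond} on $\partial_t A$ into the $L^2$-type Carleson bound on $\mathcal{E}/\sqrt{t}$ that the perturbation theory requires). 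A secondary technical point is justifying the reduction on the unbounded domain $\mathbb{R}^2_+$ — localizing to Carleson boxes and checking the constants are uniform in $\Delta$ — which is routine but must be stated.
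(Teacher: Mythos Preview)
Your reduction to the Carleson-measure estimate for bounded solutions is correct and is exactly how the paper begins. The perturbation strategy, however, breaks at its central step. You assert that
\[
\frac{|\mathcal{E}(x,t)|^2}{t}\;\lesssim\;\sup_{B(x,t,t/2)}|\partial_s A|,
\]
but this is false: $\mathcal{E}(x,t)=\int_0^t\partial_s A(x,s)\,ds$ depends on the entire history of $\partial_s A$ on $(0,t)$, not just on its size at heights comparable to $t$. Concretely, take $A$ depending only on $t$ with $\partial_t A(t)=c\,t^{-1}|\log t|^{-5/4}$ for small $t$. Then $|\partial_t A|\,t\le C$, and $\int_0^R\sup_{s\in(t/2,3t/2)}|\partial_t A(s)|\,dt\lesssim|\log R|^{-1/4}<\infty$, so \eqref{condition:L^1Carlesontypecond} holds; yet $|\mathcal{E}(t)|\approx|\log t|^{-1/4}$ and
\[
\int_0^{l(\Delta)}\frac{|\mathcal{E}(t)|^2}{t}\,dt\;\approx\;\int_0^{l(\Delta)}\frac{dt}{t\,|\log t|^{1/2}}\;=\;\infty.
\]
So $L$ is \emph{not} an FKP-type Carleson perturbation of the $t$-independent operator $L_0=\Div(A(\cdot,0^+)\nabla\cdot)$ (nor of $\Div(A(\cdot,t_0)\nabla\cdot)$ for any fixed $t_0$), and the black-box invocation of perturbation theory is unavailable. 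The same example kills the subsequent claim $\iint_{T(\Delta)}|F|^2\,t^{-1}\,dx\,dt\lesssim\|u\|_\infty^2\,\sigma(\Delta)$ with $F=\mathcal{E}\nabla u$. The pointwise bound $|\partial_t A|\,t\le C$ does convert the $L^1$ Carleson hypothesis on $\partial_t A$ into the $L^2$ Carleson condition $|\partial_t A|^2 t\,dx\,dt$ (this is \eqref{condition:L^2Carlesontypecond} in the paper), but that is a statement about $\partial_t A$ at a single scale, not about the accumulated difference $\mathcal{E}$.

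This is precisely why the paper does not compare to a single $t$-independent operator. Instead it carries the whole family $L^s=\partial_x(A_\Vert(\cdot,s)\partial_x\cdot)$ of frozen operators along: for each $s$ a Hodge decomposition $c(\cdot,s)\chi_{3\Delta}=A_\Vert(\cdot,s)\partial_x\phi^s+h^s$ is performed, and one introduces the approximation $\rho(x,s)=e^{-(\eta s)^2L^s}\phi^s(x)$ and the difference $\theta=\phi^s-\rho$. Differentiating in $s$ produces three pieces $w_t$, $w_s^{(1)}$, $w_s^{(2)}$ (the latter two arising from the $s$-dependence of $L^s$ and of $\phi^s$), and the bulk of the work is establishing local square-function bounds for each piece via the Kato estimates, heat-semigroup bounds, and $L^2$--$L^2$ off-diagonal decay. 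The point is that each piece sees $\partial_s A$ only at the running scale $s$, so the $L^1$ Carleson hypothesis can be fed in directly---something a comparison $A-A_0$, which integrates $\partial_s A$ over all scales below $t$, cannot do.
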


It is clear that on the upper half plane the \(L^1\) Carleson condition on \(\partial_tA\) \eqref{condition:L^1Carlesontypecond} is weaker than the mixed \(L^1-L^\infty\) condition \eqref{L1-LInftyCond}. Hence, this new condition gives rise to a larger class of elliptic operators with \(\omega\in A_\infty(\sigma)\) that is not covered by any other previously mentioned condition.

\medskip
The \(L^1\) Carleson condition on \(\partial_tA\) can also be put into context from a different point of view. In fact, Carleson conditions are a widely applied tool in the area of solvability of boundary value problems in the elliptic and parabolic setting. The famous DKP condition can be stated as 
\begin{align}
    \sup_{\Delta\subset\partial\Omega \textrm{ boundary ball}}\frac{1}{\sigma(\Delta)}\int_{T(\Delta)}|\nabla A(y,s)|^2t dx dt\leq C<\infty,\label{condition:TypicalL^2Carlesontypecond}
\end{align}
if we assume \(|\nabla A(x,t)|\leq \frac{C}{t}\), or with \(\mathrm{osc}_{(y,s)\in B(x,t,t/2)}|A(y,s)|\) instead of the term \(|\nabla A(y,s)|\), and originates from \cite{kenig_dirichlet_2001}. The DKP condition and versions thereof have been studied for elliptic and parabolic boundary value problems and yield solvability of the corresponding boundary value problem in many cases. The elliptic Dirichlet boundary value problem was studied in \cite{kenig_dirichlet_2001}, \cite{dindos_lp_2007}, \cite{hofmann_uniform_2021}, and \cite{david_small_2023}, and also for elliptic operators with complex coefficients (cf. \cite{dindos_regularity_2019}) or elliptic systems (cf. \cite{dindos__2021}). Furthermore, the DKP condition was also successfully studied for the elliptic regularity problem in \cite{dindos_boundary_2017}, \cite{mourgoglou_lp-solvability_2022}, \cite{dindos_etal_regularity_2023}, and \cite{feneuil_alternative_2023}. A helpful survey article of the elliptic setting is \cite{dindos_boundary_2023} which contains further references.
Lastly, we also have positive results for the parabolic Dirichlet and regularity boundary value problem in \cite{dindos_parabolic_2020} and \cite{dindos_lp_2024}. 

\medskip
In contrast to the typical DKP condition \eqref{condition:TypicalL^2Carlesontypecond}, the new Carleson condition in this work \eqref{condition:L^1Carlesontypecond} does not contain any derivative in any other direction than the transversal \(t-\)direction and hence it is easy to see that \eqref{condition:L^1Carlesontypecond} applies to a different class of operators.

\subsection{Idea and sketch of the proof}
According to \cite{dindos_bmo_2011} it is enough to show the local square function bound
\begin{align}\sup_{\Delta\subset\partial\Omega}\sigma(\Delta)^{-1}\int_{T(\Delta)}\vert\nabla u\vert^2\delta dX\lesssim \Vert f\Vert^2_{L^\infty(\partial\Omega)}\label{SquarefctBound}\end{align}
to get \(\omega\in A_\infty(d\sigma)\) for small cubes \(\Delta\).
The first part of the proof follows the outline from \cite{hofmann_dirichlet_2022} or \cite{ulmer_mixed_2024} in which we go back to the idea of a Hodge decomposition of the component \(c\) of \(A\) which originated in \cite{hofmann_square_2015}. However, since \(c\) does depend on \(t\) we have to deal with a family of Hodge decompositions
        \[c(x,t)\chi_{2\Delta}(x)=\AP(x,t)\nabla\phi_t(x) + h_t(x)\qquad \textrm{with \(x\)-divergence free }h_t.\]
and have to control this family uniformly. Since we are only in \(n=1\), we get pointwise bounds of \(\partial_x\phi^s\) and \(\partial_s\partial_x\phi^s\) which we do not find in \cite{ulmer_mixed_2024}. These allow us to find square function bounds of the partial derivatives of the approximating function 
        \[\rho(x,t):= e^{-t^2L^t}\phi_t(x),\]
        where \(L^t:=\Div_x(A(x,t)\nabla_x\cdot)\) is a family of one-dimensional differential operators. This approximating function \(\rho\) plays an important role in the proof as it does in \cite{hofmann_dirichlet_2022} and \cite{ulmer_mixed_2024} and bounding the necessary square function terms of the partial derivatives of \(\rho\) requires the observation in \cite{ulmer_mixed_2024} to decouple the different dependencies of \(t\) by considering 
        \[w(x,t,s):= e^{-t^2L^s}\phi_s(x)\]
        with three variables. This will allow us to obtain a new explicit representation of \(\rho\). The slight improvement of regularity of \(\phi^s\) that we obtain through the restriction to \(n=1\) allows us to eventually replace the mixed \(L^1-L^\infty\) condition \eqref{L1-LInftyCond} from \cite{ulmer_mixed_2024} by something weaker, namely \eqref{condition:L^1Carlesontypecond}. The local nature of the Carleson condition require much more delicate handling of each term. In addition to the tools needed in \cite{ulmer_mixed_2024} like the Kato conjecture and heat semigroup properties, we also make use of off-diagonal estimates for the heat semigroup in delicate ways.
        
        \medskip
        As in \cite{ulmer_mixed_2024} the majority of the work lies in establishing the involved square function bounds in \reflemma{lemma:L^2estimatesForSquareFunctions} and properties of the family \((\nabla\phi^s)_s\), which have to be done differently for our weaker condition \eqref{condition:L^1Carlesontypecond}.

\section*{Acknowledgements}
The author expresses gratitude to Jill Pipher and Martin Dindo\v{s} for many insightful discussions in this topic that helped with refining the applied methods. 
\section{Notations and Setting}

Throughout this work let \(\Omega=\mathbb{R}^{2}_+=\mathbb{R}\times(0,\infty)\) and \(\Delta=\Delta(P,r):=B(P,r)\cap\partial\Omega\) denotes a boundary ball centered in point \(P\in\partial\Omega\) with radius \(l(\Delta)=r>0\) and 
\[T(\Delta):=\{(x,t)\in\Omega; x\in\Delta, 0<t<l(\Delta)\}\]
its Carleson region. 
The cone with vertex in \(P\in \partial\Omega\) and aperture \(\alpha\) is denoted by
\[\Gamma_\alpha(P):=\{(x,t)\in \Omega; |x-P|<\alpha t\}.\] 
Furthermore, we set \(\mathcal{D}^\eta_k(\Delta)\) as the collection of certain boundary balls with radii \(\eta 2^{-k}\) in \(\Delta\) such that their union covers \(\Delta\) but they have finite overlap. 

\smallskip

We define the nontangential maximal function as
\[N_\alpha(u)(P):=\sup_{(x,t)\in \Gamma_\alpha(P)} |u(x,t)|\qquad \textrm{ for }P\in \mathbb{R}^n,\]
and the mean-valued nontangential maximal function as
\[\tilde{N}_\alpha(u)(P):=\sup_{(x,t)\in \Gamma_\alpha(P)} \Big(\int_{\big\{(y,s); |y-x|<\frac{t}{2}\big\}}|u(y,s)|^2dyds\Big)^{1/2}\qquad \textrm{ for }P\in \mathbb{R}^n.\]

\smallskip

We can now consider the \(L^p\)-Dirichlet boundary value problem for the elliptic operator \(Lu(x,t):=\Div(A(x,t)\nabla u(x,t))\), i.e. for an operator satisfying \eqref{eq:DefinitionOfUniformElliptic}.
\begin{defin}\label{def:L^pDirichletProblem}
We say the \textit{\(L^p\)-Dirichlet boundary value problem} is solvable for \(L\) if for all boundary data \(f\in C_c(\partial\Omega)\cap L^p(\Omega)\) the unique existent solution \(u\in W^{1,2}(\Omega)\) of 
\[\begin{cases} Lu=0 &\textrm{in }\Omega, \\ u=f &\textrm{on }\partial\Omega.\end{cases}\]
satisfies
\[\Vert N(u) \Vert_{L^p(\partial\Omega)}\leq C \Vert f\Vert_{L^p(\partial\Omega)},\]
where the constant \(C\) is independent of \(u\) and \(f\).
\end{defin}
As usual, we denote the elliptic measure by \(\omega\), the usual \(n\)-dimensional Hausdorff surface measure by \(\sigma\) and the Muckenhoupt and Reverse Hölder spaces by \( A_\infty(d\sigma)\) and \(B_q(d\sigma)\) respectively. The solvability of the \(L^p\) Dirichlet boundary value problem is equivalent to \(\omega\in B_{p'}(d\sigma)\), which means that solvability for some \(p\) is equivalent to \(\omega\in A_\infty(d\sigma)\). 

\medskip

In our setting we would like to introduce a bit more notation and specification. In the following we are 
considering the matrix
\begin{align} 
A(x,t)=\begin{pmatrix} \AP(x,t) & b(x,t) \\ c(x,t) & d(x,t)\end{pmatrix}\label{eq:DefinitionOfMatrixA}
\end{align}
for \((x,t)\in \Omega=\mathbb{R}^{n+1}_+\), where \(\AP,b,c,d\) are frunctions from \(\Omega\) to \(\mathbb{R}\). We will also be discussing the family of operators
\[L^sv(x):=\Div_x(\AP(x,s)\nabla v(x)) \qquad\textrm{ for }v\in W^{1,2}(\partial\Omega)=W^{1,2}(\mathbb{R}^n), s>0.\]
At last, we notice that from \(\frac{|\partial_t A|}{t}\leq C\) and the \(L^1\) Carleson condition (\ref{condition:L^1Carlesontypecond}), which is
\begin{align*}\frac{1}{\sigma(\Delta)}\int_{T(\Delta)}\sup_{(y,s)\in B(x,t,t/2)}|\partial_s A(y,s)| dx dt\leq C<\infty,\end{align*}
we have as a consequence the weaker \(L^2\) Carleson type condition
\begin{align}\frac{1}{\sigma(\Delta)}\int_{T(\Delta)}\sup_{(y,s)\in B(x,t,t/2)}|\partial_s A(y,s)|^2s dx dt\leq C<\infty.\label{condition:L^2Carlesontypecond}\end{align}

Since it is sufficient to check \eqref{SquarefctBound} on only small cubes for \(\omega_L\in A_\infty(\sigma)\), we restrict to boundary cubes of size \(l(\Delta)\leq R\) for some fixed \(R>0\).

\section{Consequences of the Kato conjecture and properties of the heat semigroup}

The following two results are the solution to the Kato conjecture which was fully resolved in \cite{Auscher_Kato} for \(p=2\). The \(L^p\) theory for other \(p\) was first fully established in \cite{auscher_necessary_2007} with partial \(L^p\) theory results appearing earlier (please refer to the introduction of \cite{auscher_necessary_2007} for the full history). Note that we define \(\dot W^{1,p}(\mathbb{R}^n)\) as the closure of \(C_0^\infty(\mathbb{R}^n)\) under the seminorm given by \(\Vert f\Vert_{\dot W^{1,p}}:=\Vert \nabla f\Vert_{L^p(\mathbb{R}^n)}\). The Kato \(L^p\) theory for the most general elliptic operator is shown in Thm 4.77 in \cite{hofmann_lp_2022}.
\begin{prop}\label{prop:KatoConjecture}
    For a function \(f\in \dot W^{1,p}(\mathbb{R}^n)\) we have
    \[\Vert L^{1/2} f\Vert_{L^p(\mathbb{R}^n)}\leq C(n,\lambda_0,\Lambda_0)\Vert \nabla f\Vert_{L^p(\mathbb{R}^n)}\]
    for all \(1<p<\infty\). Furthermore, there exists \(\varepsilon_1>0\) such that if \(1<p<2+\varepsilon_1\) then
    \[\Vert \nabla f\Vert_{L^p(\mathbb{R}^n)}\leq C(n,\lambda_0,\Lambda_0)\Vert L^{1/2} f\Vert_{L^p(\mathbb{R}^n)}.\]
\end{prop}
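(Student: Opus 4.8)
\emph{Proof strategy.} The plan is not to reprove this from first principles: Proposition~\ref{prop:KatoConjecture} is exactly the (now fully resolved) Kato square root problem together with its $L^p$ extension, and in the sequel we shall simply invoke it. Still, let me indicate the structure of the argument one would follow. The heart of the matter is the $L^2$ equivalence \(\Vert L^{1/2}f\Vert_{L^2}\simeq\Vert\nabla f\Vert_{L^2}\) of \cite{Auscher_Kato}. One would recast $L^{1/2}$ through the holomorphic functional calculus of the bisectorial first-order operator built from the perturbed Dirac-type system \(\Pi=\left(\begin{smallmatrix}0&\Div\\-\nabla&0\end{smallmatrix}\right)\), reduce the sought quadratic estimate to a square-function bound for \(t\,\Pi(1+t^2\Pi^2)^{-1}\), split it into a "principal part'' handled by a $Tb$-type argument with test functions \((1+t^2L)^{-1}\mathbf 1\) and a remainder controlled by a Carleson measure estimate. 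The essential analytic input throughout is the Gaffney--Davies (\(L^2\) off-diagonal) decay of the heat and resolvent families \(e^{-tL}\) and \((1+t^2L)^{-1}\), which the paper exploits in its own right later on as well.

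From the $L^2$ identity one bootstraps to $L^p$ as in \cite{auscher_necessary_2007} and \cite{hofmann_lp_2022}. The first inequality, \(\Vert L^{1/2}f\Vert_{L^p}\lesssim\Vert\nabla f\Vert_{L^p}\) for every \(1<p<\infty\), I would obtain by observing that the relevant operator is \(L^2\)-bounded and satisfies \(L^2\) off-diagonal estimates, hence is a Calderón--Zygmund-type operator: boundedness on \(L^p\) for \(1<p\le 2\) follows from a good-\(\lambda\) (or sparse domination) argument, and for \(2<p<\infty\) by duality, since the adjoint operator is of the same type with \(L\) replaced by \(L^*\), to which the identical \(L^2\) theory applies. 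The second inequality is the delicate one: the Riesz transform \(\nabla L^{-1/2}\) is bounded on \(L^p\) only for \(p\) in an interval \((1,2+\varepsilon_1)\), the obstruction being that \(\nabla e^{-tL}\) enjoys \(L^p\to L^p\) off-diagonal bounds only in such a range, and this range cannot be pushed past \(2\) by more than a dimensional/ellipticity-dependent \(\varepsilon_1\) --- Auscher's examples show the threshold \(2+\varepsilon_1\) is sharp. Its proof runs by interpolating the \(L^2\) bound against a weak-type endpoint estimate extracted from a Calderón--Zygmund decomposition adapted to \(L\), the "bad'' part being absorbed via the \(L^p\) off-diagonal estimates for \(\nabla e^{-tL}\) with some \(p>2\).

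The single genuine obstacle, were one to insist on a self-contained treatment, is precisely the \(L^2\) Kato estimate, which resisted effort for half a century; once it is in hand, the \(L^p\) statements are a nontrivial but essentially routine extrapolation. Since all of this is available in the cited literature in exactly the generality we require --- bounded measurable, possibly nonsymmetric coefficient matrices, and in our case the one-dimensional boundary operators \(L^s\) --- we take Proposition~\ref{prop:KatoConjecture} as given.
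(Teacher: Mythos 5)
The paper offers no proof of this proposition at all: it is quoted from the literature (\cite{Auscher_Kato} for the $L^2$ case, \cite{auscher_necessary_2007} and Thm 4.77 of \cite{hofmann_lp_2022} for the $L^p$ theory), which is exactly what you do. Your sketch of the underlying machinery is a reasonable summary of those references, and taking the result as given is precisely the paper's own approach.
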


The classical Kato solution is the \(L^2\) case.
\begin{prop}\label{cor:NablaL^-1/2Bounded}
    For a function \(f\in L^2(\mathbb{R}^n)\) we have
    \[\Vert \nabla L^{-1/2} f\Vert_{L^2(\mathbb{R}^n)}\approx\Vert f\Vert_{L^2(\mathbb{R}^n)}\]
    and if f is a vector valued function
    \[\Vert L^{-1/2} \Div(f)\Vert_{L^2(\mathbb{R}^n)}\approx\Vert f\Vert_{L^2(\mathbb{R}^n)}.\]
\end{prop}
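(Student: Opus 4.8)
\smallskip

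\emph{Proof sketch.} This statement is a reformulation of the $p=2$ case of \refprop{prop:KatoConjecture}, and the plan is to deduce it from there. First one must give meaning to $L^{-1/2}$: since $L$ is maximal accretive on $L^2(\R^n)$, the square root $L^{1/2}$ is a closed, injective operator with dense range, and \refprop{prop:KatoConjecture} with $p=2$ (where both displayed inequalities are available, since $2<2+\varepsilon_1$) states precisely that $L^{1/2}$ extends to an isomorphism $\dot W^{1,2}(\R^n)\to L^2(\R^n)$ with $\Vert L^{1/2}g\Vert_{L^2}\approx\Vert\nabla g\Vert_{L^2}$. Hence $L^{-1/2}$ is a well-defined bounded operator $L^2(\R^n)\to\dot W^{1,2}(\R^n)$. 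Substituting $g=L^{-1/2}f$ into this equivalence and using $L^{1/2}L^{-1/2}f=f$ yields the first claim $\Vert\nabla L^{-1/2}f\Vert_{L^2}\approx\Vert f\Vert_{L^2}$ at once.

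For the second equivalence I would argue by duality. The transposed operator $L^{*}:=\Div(A^{T}\nabla\cdot)$ satisfies \eqref{eq:DefinitionOfUniformElliptic} with the same ellipticity constant $\lambda_0$, so \refprop{prop:KatoConjecture}, and hence the first part already proven, applies to $L^{*}$ as well: $\Vert\nabla (L^{*})^{-1/2}h\Vert_{L^2}\approx\Vert h\Vert_{L^2}$. Moreover $(L^{1/2})^{*}=(L^{*})^{1/2}$, so $(L^{-1/2})^{*}=(L^{*})^{-1/2}$. For a vector field $f\in L^2(\R^n;\R^n)$ and $h\in L^2(\R^n)$ integration by parts gives
\[\langle L^{-1/2}\Div f,\,h\rangle=\langle \Div f,\,(L^{*})^{-1/2}h\rangle=-\langle f,\,\nabla (L^{*})^{-1/2}h\rangle,\]
so Cauchy--Schwarz together with the $L^{*}$-estimate gives $\Vert L^{-1/2}\Div f\Vert_{L^2}\lesssim\Vert f\Vert_{L^2}$. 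For the converse I would take the supremum over $h$ with $\Vert h\Vert_{L^2}=1$ in the displayed identity: in the one-dimensional setting at hand the map $h\mapsto\nabla (L^{*})^{-1/2}h=\D_x(L^{*})^{-1/2}h$ is an isomorphism of $L^2(\R)$ onto itself, so that supremum equals, up to constants, $\sup_{\Vert k\Vert_{L^2}=1}|\langle f,k\rangle|=\Vert f\Vert_{L^2}$, which gives the reverse inequality. (In dimension $n\ge 2$ this last step would only see the curl-free part of $f$, but only $n=1$ is needed here.)

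The argument requires no new ideas beyond \refprop{prop:KatoConjecture}; the only points needing care are bookkeeping ones — injectivity and dense range of $L^{1/2}$ so that $L^{-1/2}$ is unambiguous, identifying the domain in \refprop{prop:KatoConjecture} with $\dot W^{1,2}$ (immediate from the paper's definition of $\dot W^{1,2}$ as the $\Vert\nabla\cdot\Vert_{L^p}$-completion of $C_0^\infty$), the identity $(L^{1/2})^{*}=(L^{*})^{1/2}$, and the fact that $L^{*}$ inherits ellipticity with unchanged constants. These are standard facts about maximal accretive operators and their holomorphic functional calculus, and I expect this to be the shortest argument in the paper.
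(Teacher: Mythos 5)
The paper offers no proof of this proposition at all: it is quoted as ``the classical Kato solution'' for $p=2$, i.e.\ as a known consequence of \cite{Auscher_Kato}, so there is nothing to compare your argument against line by line. Your derivation is a correct way to supply the missing argument from the ingredients the paper does state. The first equivalence is exactly the substitution $g=L^{-1/2}f$ into the two-sided $p=2$ estimate of \refprop{prop:KatoConjecture}, and the duality computation $\langle L^{-1/2}\Div f,h\rangle=-\langle f,\nabla(L^{*})^{-1/2}h\rangle$ together with the same estimate for $L^{*}$ (which is elliptic with the same $\lambda_0$) gives $\Vert L^{-1/2}\Div f\Vert_{L^2}\lesssim\Vert f\Vert_{L^2}$ in every dimension. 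Your parenthetical caveat is in fact the one substantive point worth recording: the reverse inequality $\Vert L^{-1/2}\Div f\Vert_{L^2}\gtrsim\Vert f\Vert_{L^2}$ is simply false for general vector fields when $n\ge 2$ (any nonzero divergence-free $f$ is a counterexample), so the proposition as printed on $\mathbb{R}^n$ must be read either with ``$\lesssim$'' in that direction or, as in this paper's actual use, with $n=1$, where your surjectivity argument for $h\mapsto\partial_x(L^{*})^{-1/2}h$ (dense range of $\partial_x$ on $C_0^\infty$ plus the two-sided bound) closes the loop. The only bookkeeping you wave at but should not skip entirely is that $(L^{1/2})^{*}=(L^{*})^{1/2}$ and the extension of $L^{1/2}$ to an isomorphism $\dot W^{1,2}\to L^2$; both are standard for maximal accretive operators and are exactly what the citation to \cite{hofmann_lp_2022} is carrying.
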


The following result assumes a uniform elliptic \(t\)-independent operator \(L\), i.e. \(Lu(x)=\Div_x(\AP(x)\nabla u(x))\) for \(x\in\mathbb{R}^n\).

\begin{prop}[Proposition 4.3  from \cite{hofmann_dirichlet_2022}]\label{prop:L2NormBoundsOfHeatSemigroup}
If \(L\) is t-independent then the following norm estimates hold for \(f\in L^2(\mathbb{R}^n),l\in \mathbb{N}_0, t>0\) and constants \(c_l,C>0\):

\begin{itemize}
    \item \(\Vert \partial_t^l e^{-tL}f\Vert_{L^2(\mathbb{R}^n)}\leq c_l t^{-l} \Vert f\Vert_{L^2(\mathbb{R}^n)}\),
    \item \(\Vert \nabla e^{-tL}f\Vert_{L^2(\mathbb{R}^n)}\leq C t^{-1/2} \Vert f\Vert_{L^2(\mathbb{R}^n)}\).
\end{itemize}
\end{prop}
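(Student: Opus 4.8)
The plan is to derive both families of estimates from a single structural fact: since $L$ is $t$-independent, it is the closed accretive operator on $L^2(\mathbb{R}^n)$ associated with the bounded and coercive sesquilinear form
\[
a(u,v)=\int_{\mathbb{R}^n}A\,\nabla u\cdot\overline{\nabla v}\,dx .
\]
Boundedness of $A$ gives $|a(u,v)|\le\Lambda_0\|\nabla u\|_2\|\nabla v\|_2$, while \eqref{eq:DefinitionOfUniformElliptic} gives $\mathrm{Re}\,a(u,u)\ge\lambda_0\|\nabla u\|_2^2$ (this survives passing to complex-valued $u$, since only the symmetric part of $A$ enters the real part); equivalently, the numerical range of $L$ lies in a sector of half-angle $\arctan(\Lambda_0/\lambda_0)<\pi/2$, so $L$ is sectorial of angle strictly below $\pi/2$. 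By the classical theory of such operators, $L$ generates a bounded analytic contraction semigroup $\{e^{-tL}\}_{t>0}$ on $L^2(\mathbb{R}^n)$, together with the analyticity bounds $\|L^l e^{-tL}\|_{L^2\to L^2}\le c_l\,t^{-l}$ for all $l\in\mathbb{N}_0$ and $t>0$; for $l\ge2$ these also follow by iterating the case $l=1$, since $L$ commutes with the semigroup, $L^l e^{-tL}=\bigl(Le^{-(t/l)L}\bigr)^l$.

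Granting this, the first bullet is immediate. For $t>0$ the semigroup maps $L^2$ into $D(L^l)$ for every $l$, and differentiating the semigroup identity gives $\partial_t^l e^{-tL}f=(-1)^l L^l e^{-tL}f$; hence $\|\partial_t^l e^{-tL}f\|_2=\|L^l e^{-tL}f\|_2\le c_l\,t^{-l}\|f\|_2$.

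For the gradient bound I would argue by energy rather than via a spectral decomposition, which is unavailable since $A$ need not be symmetric. Writing $u(t)=e^{-tL}f$, coercivity followed by Cauchy--Schwarz and the $l=1$ analyticity bound yields
\[
\lambda_0\|\nabla u(t)\|_2^2\le\mathrm{Re}\,a\bigl(u(t),u(t)\bigr)=\mathrm{Re}\,\langle Lu(t),u(t)\rangle\le\|Lu(t)\|_2\,\|u(t)\|_2\le\frac{c_1}{t}\,\|f\|_2^2 ,
\]
using $\|u(t)\|_2\le\|f\|_2$; this gives $\|\nabla e^{-tL}f\|_2\le C\,t^{-1/2}\|f\|_2$. (Alternatively, one may combine $\|L^{1/2}e^{-tL}\|_{L^2\to L^2}\le Ct^{-1/2}$, itself a consequence of analyticity via $L^{1/2}e^{-tL}=\pi^{-1/2}\int_0^\infty s^{-1/2}Le^{-(s+t)L}\,ds$, with the Kato equivalence $\|\nabla g\|_2\approx\|L^{1/2}g\|_2$ from Proposition~\ref{cor:NablaL^-1/2Bounded}.)

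There is essentially no hard step once $t$-independence is imposed: the only non-elementary ingredient is the generation of a bounded analytic semigroup by a sectorial operator, which is classical. The points needing a little care are purely form-theoretic bookkeeping, namely that $e^{-tL}$ maps $L^2$ into $\bigcap_{l\ge1}D(L^l)$ for $t>0$ (so that the $t$-derivatives and the identity $\partial_t^l e^{-tL}f=(-1)^l L^l e^{-tL}f$ are legitimate) and that $a(u,u)=\langle Lu,u\rangle$ for $u\in D(L)$, both standard.
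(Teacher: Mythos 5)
The paper does not prove this proposition at all: it is imported verbatim as Proposition 4.3 of \cite{hofmann_dirichlet_2022}, so there is no in-paper argument to compare against. Your proof is the standard form-method/analytic-semigroup argument and it is correct. The accretivity and sectoriality of $L$ follow exactly as you say (and you are right to note that coercivity survives complexification, since $\mathrm{Re}\bigl(A(\xi+i\eta)\cdot\overline{(\xi+i\eta)}\bigr)=A\xi\cdot\xi+A\eta\cdot\eta\ge\lambda_0|\xi+i\eta|^2$ for real $\xi,\eta$); analyticity then gives $\|tLe^{-tL}\|_{L^2\to L^2}\le c$, the iteration $L^le^{-tL}=(Le^{-(t/l)L})^l$ is legitimate because $L$ commutes with its semigroup, and $\partial_t^le^{-tL}f=(-1)^lL^le^{-tL}f$ holds for $t>0$ by smoothing into $D(L^l)$. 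The energy argument for the gradient bound,
\[
\lambda_0\|\nabla u(t)\|_2^2\le\mathrm{Re}\,\langle Lu(t),u(t)\rangle\le\|Lu(t)\|_2\|u(t)\|_2\le c_1t^{-1}\|f\|_2^2,
\]
is the right choice here: it is elementary and avoids both symmetry of $A$ and the Kato theorem, whereas your parenthetical alternative via $\|L^{1/2}e^{-tL}\|\lesssim t^{-1/2}$ and $\|\nabla g\|_2\approx\|L^{1/2}g\|_2$ needlessly invokes the square-root estimate (\refprop{cor:NablaL^-1/2Bounded}), which is much deeper than the statement being proved. No gaps.
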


We have the two following important results
\begin{prop}\label{Proposition11}[Proposition 11 in \cite{hofmann_dirichlet_2022}]
    Let \(\eta>0,\alpha>0\) and L be t-independent. Then we have for all \((y,t)\in\Gamma_{\eta\alpha}(x)\)
    \[\eta^{-1}\partial_t e^{-(\eta t)^2L}f(y)\lesssim M(\nabla f)(x),\]
    and hence
    \[\Vert \eta^{-1} N_{\eta\alpha}[\partial_t e^{-(\eta t)^2L}f]\Vert_{L^p}\lesssim \Vert\nabla f\Vert_{L^p}\]
    for all \(p>1\) and \(f\in W^{1,p}(\mathbb{R}^n)\).
\end{prop}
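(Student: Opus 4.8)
The plan is to eliminate the parameter $\eta$ by a change of variables and then reduce the statement to a single pointwise heat‑semigroup estimate. Write $s:=(\eta t)^2$, so that $\sqrt s=\eta t$, and use the chain rule together with $\tfrac{d}{d\tau}e^{-\tau L}=-Le^{-\tau L}$ to record
\[\eta^{-1}\partial_t e^{-(\eta t)^2L}f(y)=-2\eta t\,\big(Le^{-sL}f\big)(y)=-2\sqrt s\,\big(Le^{-sL}f\big)(y).\]
Thus the asserted inequality is equivalent to $\big|Le^{-sL}f(y)\big|\lesssim s^{-1/2}M(\nabla f)(x)$ whenever $(y,t)\in\Gamma_{\eta\alpha}(x)$, i.e. $|x-y|<\alpha\sqrt s$; note that no $\eta$ survives this reduction, so there is nothing uniform left to track.

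Next I would prove the pointwise bound via the heat kernel $K_s(\cdot,\cdot)$ of $e^{-sL}$. Since $L$ is $t$‑independent, in divergence form, and has no lower‑order term, in our setting ($n=1$) the kernel and its time derivative enjoy Gaussian upper bounds, $|\partial_s K_s(y,z)|\lesssim s^{-(n+2)/2}e^{-c|y-z|^2/s}$, and $L\mathbf{1}=0$, so $e^{-sL}$ preserves constants and $\int\partial_s K_s(y,z)\,dz=\partial_s\!\int K_s(y,z)\,dz=0$. I would then write $-Le^{-sL}f(y)=\int\partial_s K_s(y,z)\big(f(z)-f(y)\big)\,dz$ and insert $f(z)-f(y)=\int_0^1\nabla f\big(y+\tau(z-y)\big)\cdot(z-y)\,d\tau$ to get
\[\big|Le^{-sL}f(y)\big|\lesssim\int_{\mathbb{R}^n}s^{-\frac{n+2}{2}}e^{-c|y-z|^2/s}\,|y-z|\int_0^1\big|\nabla f\big(y+\tau(z-y)\big)\big|\,d\tau\,dz .\]
Exchanging the order of integration, substituting $w=y+\tau(z-y)$ for each fixed $\tau\in(0,1)$, and estimating the resulting Gaussian‑weighted average of $|\nabla f|$ by $M(\nabla f)(y)$, the powers of $s$ and $\tau$ combine to give, uniformly in $\tau$, a bound of $s^{-1/2}M(\nabla f)(y)$; hence $\big|Le^{-sL}f(y)\big|\lesssim s^{-1/2}M(\nabla f)(y)$. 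Finally $|x-y|<\alpha\sqrt s$ makes balls centered at $y$ comparable to balls centered at $x$, so $M(\nabla f)(y)\lesssim_\alpha M(\nabla f)(x)$, which together with the reduction above yields $\eta^{-1}|\partial_t e^{-(\eta t)^2L}f(y)|\lesssim M(\nabla f)(x)$.

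The nontangential statement then follows by taking the supremum over $(y,t)\in\Gamma_{\eta\alpha}(x)$, and $\big\|\eta^{-1}N_{\eta\alpha}[\partial_t e^{-(\eta t)^2L}f]\big\|_{L^p}\lesssim\|M(\nabla f)\|_{L^p}\lesssim\|\nabla f\|_{L^p}$ for every $p>1$ by the Hardy--Littlewood maximal theorem. (The kernel integral above converges for any $f$ with $\nabla f\in L^p$ because of the Gaussian decay, so no global integrability of $f$ itself is needed, which is consistent with working in $\dot W^{1,p}$.)

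The main obstacle I anticipate is the very first input of the second step: genuine pointwise Gaussian bounds for the heat kernel \emph{and} its $s$‑derivative. In dimension $n=1$, which is the setting of this paper, this is classical. For general $n$ with merely measurable, possibly nonsymmetric coefficients one would instead have to run the localization through $L^2$ off‑diagonal (Gaffney--Davies) estimates for $e^{-sL}\,\Div$ and interior parabolic regularity for the caloric function $e^{-sL}f$; the delicate point there is to arrange the annular sum so that one recovers the centered, $L^1$‑averaged Hardy--Littlewood maximal function $M(\nabla f)$ — and hence the full range $p>1$ — rather than the larger $L^2$‑averaged maximal operator, which would only give $p>2$. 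Everything else — the chain rule, the identity $L\mathbf{1}=0$, the first‑order Taylor expansion producing $\nabla f$, the elementary estimation of Gaussian‑weighted averages by $M$, and the comparison of $M(\nabla f)(y)$ with $M(\nabla f)(x)$ — is routine.
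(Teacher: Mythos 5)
Your proof is correct and matches the approach used in the source paper \cite{hofmann_dirichlet_2022}: the present paper does not reprove the proposition but cites it, and later (see the proof of Corollary 4.7) explicitly observes that ``the only properties of $\partial_t e^{-t^2L}$ that were used are bounds of its kernel,'' i.e.\ exactly the Gaussian upper bounds on $K_s$ and $\partial_s K_s$, the conservation identity $\int \partial_s K_s(y,z)\,dz=0$, the first-order Taylor expansion producing $\nabla f$, and the estimation of Gaussian-weighted averages by the $L^1$-Hardy--Littlewood maximal function, precisely your steps. One small correction to your closing caveat: pointwise Gaussian bounds for $K_s$ and $\partial_s K_s$ hold for divergence-form operators with real, bounded, measurable (possibly nonsymmetric) coefficients in every dimension $n$, not only $n=1$, by Aronson--Nash theory combined with interior parabolic regularity; the off-diagonal (Gaffney) route is only forced on one for complex coefficients or systems, where it indeed degrades the maximal operator to the $L^2$-averaged version and the exponent range to $p>2$, as you correctly note in comparing with Proposition \ref{Proposition12}.
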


\begin{prop}[Proposition 12 in \cite{hofmann_dirichlet_2022}]\label{Proposition12}
     Let \(\eta>0\) and \(L\) be t-independent. Then we have 
    \[\Vert \tilde{N}_\eta[\nabla e^{-(\eta t)^2L}f]\Vert_{L^p}\lesssim \Vert\nabla f\Vert_{L^p}\]
    for all \(p>2\) and \(f\in W^{1,p}(\mathbb{R}^n)\).
\end{prop}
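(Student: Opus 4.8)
The plan is to establish the pointwise domination
$$\tilde N_\eta\big[\nabla e^{-(\eta t)^2 L}f\big](P)\;\lesssim\;\big(M(|\nabla f|^2)(P)\big)^{1/2},\qquad P\in\mathbb{R}^n,$$
and then to take $L^p$ norms: since $p>2$, the Hardy--Littlewood maximal operator $M$ is bounded on $L^{p/2}$, so
$$\big\|\tilde N_\eta[\nabla e^{-(\eta t)^2L}f]\big\|_{L^p}\lesssim\big\|M(|\nabla f|^2)\big\|_{L^{p/2}}^{1/2}\lesssim\big\|\,|\nabla f|^2\,\big\|_{L^{p/2}}^{1/2}=\|\nabla f\|_{L^p}.$$
This is also where the hypothesis $p>2$ is forced: the $L^2$ average built into $\tilde N_\eta$ makes $M(|\nabla f|^2)$ appear, and $M$ fails to be bounded on $L^{p/2}$ once $p\le2$.

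To prove the pointwise bound, fix $P$ and $(x,t)\in\Gamma_\eta(P)$, write $r:=\eta t$, and set $B:=B(x,t/2)$, so that $B\subset B(P,C_\eta r)$ and $\eta\tau\approx r$ whenever $\tau\approx t$. It suffices to bound, uniformly over scales $r'\approx r$,
$$\frac{1}{|B|}\int_B\big|\nabla e^{-(r')^2 L}f\big|^2\;\lesssim\;M(|\nabla f|^2)(P),$$
since the $L^2$ average over the Whitney box around $(x,t)$ defining $\tilde N_\eta$ is then controlled by taking such averages with $r'=\eta\tau$, $\tau\approx t$. Because $L$ annihilates constants, so does $\nabla e^{-(r')^2L}$, and we may replace $f$ by $g:=f-(f)_{4B}$. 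Write $g=g_0+\sum_{j\ge1}g_j$ with $g_0:=g\,\chi_{4B}$ and $g_j:=g\,\chi_{2^{j+2}B\setminus 2^{j+1}B}$, so that $\big(\tfrac1{|B|}\int_B|\nabla e^{-(r')^2L}g|^2\big)^{1/2}\le\sum_{j\ge0}\big(\tfrac1{|B|}\int_B|\nabla e^{-(r')^2L}g_j|^2\big)^{1/2}$.

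For $j=0$, the bound $\|\nabla e^{-(r')^2L}h\|_{L^2}\lesssim (r')^{-1}\|h\|_{L^2}$ from \refprop{prop:L2NormBoundsOfHeatSemigroup} combined with the Poincaré inequality $\|f-(f)_{4B}\|_{L^2(4B)}\lesssim r\,\|\nabla f\|_{L^2(4B)}$ bounds the $j=0$ term by $\big(\tfrac1{|4B|}\int_{4B}|\nabla f|^2\big)^{1/2}\lesssim M(|\nabla f|^2)(P)^{1/2}$. For $j\ge1$, the support of $g_j$ lies at distance $\gtrsim 2^j r$ from $B$; the $L^2$ off-diagonal (Davies--Gaffney) estimates for $\nabla e^{-(r')^2L}$ then supply a gain $(r')^{-1}e^{-c4^j}$, and estimating $\|g_j\|_{L^2}$ by Poincaré on the annulus $2^{j+2}B\setminus 2^{j+1}B$ together with the telescoping bound $|(f)_{2^{j+2}B}-(f)_{4B}|\lesssim\sum_{k\le j+2}2^k r\,M(|\nabla f|^2)(P)^{1/2}\lesssim 2^j r\,M(|\nabla f|^2)(P)^{1/2}$ gives
$$\Big(\tfrac1{|B|}\int_B|\nabla e^{-(r')^2L}g_j|^2\Big)^{1/2}\lesssim e^{-c4^j}\,2^{j(1+n/2)}\,M(|\nabla f|^2)(P)^{1/2}.$$
Summing the Gaussian-times-geometric series in $j$ yields the pointwise bound, uniformly in $(x,t)\in\Gamma_\eta(P)$ and $r'\approx r$; integrating over the Whitney box and taking the supremum over the cone completes the argument.

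The main obstacle is upgrading the global, scale-uniform estimate $\|\nabla e^{-(r')^2L}f\|_{L^2}\lesssim\|\nabla f\|_{L^2}$ — immediate from the Kato estimate \refprop{cor:NablaL^-1/2Bounded}, since $\|\nabla e^{-(r')^2L}f\|_{L^2}\approx\|e^{-(r')^2L}L^{1/2}f\|_{L^2}\le\|L^{1/2}f\|_{L^2}\approx\|\nabla f\|_{L^2}$ — to the \emph{local} average over $B$ compared against the maximal function evaluated at $P$. This is exactly where the exponential off-diagonal decay of the semigroup is indispensable, and where the telescoping control of the means $(f)_{2^jB}$ must be arranged carefully so that the series in $j$ still converges against that decay; the remaining ingredients are routine Poincaré and Hardy--Littlewood bookkeeping.
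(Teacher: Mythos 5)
The paper does not actually prove this statement: it is imported verbatim as Proposition 12 of \cite{hofmann_dirichlet_2022}, so there is no internal proof to compare against. Your argument --- reduce to a scale-uniform local estimate on the Whitney ball, subtract the mean, split into annuli, use the $L^2$ bound $\Vert\nabla e^{-t L}h\Vert_{L^2}\lesssim t^{-1/2}\Vert h\Vert_{L^2}$ for the near part and the $L^2$--$L^2$ off-diagonal estimates for $(\sqrt{t}\nabla e^{-tL})_{t>0}$ for the far parts, control $\Vert g_j\Vert_{L^2}$ by Poincar\'e plus telescoping of averages, sum the Gaussian-times-geometric series, and finish with $L^{p/2}$-boundedness of the Hardy--Littlewood maximal function (which is exactly where $p>2$ enters) --- is the standard proof of this estimate and is essentially the argument in the cited reference; it is correct. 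The one step you gloss over is the conservation property $e^{-(r')^2L}1=1$, needed to justify replacing $f$ by $f-(f)_{4B}$: this does not follow merely from ``$L$ annihilates constants'' (constants are not in $L^2$, so $e^{-tL}$ must first be extended to them), but it is a standard consequence of the off-diagonal estimates and should simply be cited.
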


We will also apply the following very basic lemma which can be proved with the above introduced statements and ideas.

\begin{lemma}\label{lemma:nablaSemigroupBoundedByNablaf}
Furthermore for t-independent \(L\), if \(f\in W_0^{1,2}(\mathbb{R}^n)\) then 
\[\Vert \nabla e^{-tL}f\Vert_{L^2(\mathbb{R}^n)}\leq C \Vert \nabla f\Vert_{L^2(\mathbb{R}^n)},\]
and
\[\Vert \nabla \partial_t e^{-t^2L}f\Vert_{L^2(\mathbb{R}^n)}\leq C t^{-1}\Vert \nabla f\Vert_{L^2(\mathbb{R}^n)}\]
\end{lemma}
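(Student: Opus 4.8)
The plan is to deduce \reflemma{lemma:nablaSemigroupBoundedByNablaf} from the Kato machinery of \refprop{prop:KatoConjecture}, \refprop{cor:NablaL^-1/2Bounded} together with the spectral calculus for the accretive operator \(L\) on \(L^2(\mathbb{R}^n)\), since \(-L\) generates a bounded analytic semigroup. First I would prove the first estimate: write
\[
\nabla e^{-tL}f = (\nabla L^{-1/2})\, L^{1/2} e^{-tL} f = (\nabla L^{-1/2})\, e^{-tL} L^{1/2} f .
\]
By \refprop{cor:NablaL^-1/2Bounded} the operator \(\nabla L^{-1/2}\) is bounded on \(L^2\), the semigroup \(e^{-tL}\) is a contraction (or at least uniformly bounded) on \(L^2\), and by the \(L^2\) Kato estimate in \refprop{prop:KatoConjecture} (the case \(p=2\)) we have \(\Vert L^{1/2} f\Vert_{L^2}\lesssim \Vert \nabla f\Vert_{L^2}\). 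Chaining these three bounds gives \(\Vert \nabla e^{-tL}f\Vert_{L^2}\lesssim \Vert \nabla f\Vert_{L^2}\) with a constant independent of \(t\), which is exactly the first claim; note this is the genuinely \(t\)-uniform strengthening of the \(t^{-1/2}\) bound in \refprop{prop:L2NormBoundsOfHeatSemigroup}, and it uses that \(f\in W_0^{1,2}\) so that \(L^{1/2}f\) and the factorization make sense.

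For the second estimate I would argue similarly, now inserting the extra derivative \(\partial_t\). Since \(\partial_t e^{-t^2L} = -2tL e^{-t^2L}\), we have
\[
\nabla \partial_t e^{-t^2L} f = -2t\,\nabla L\, e^{-t^2L} f
= -2t\,(\nabla L^{-1/2})\, L^{3/2} e^{-t^2L} f
= -2t\,(\nabla L^{-1/2})\, \big(L e^{-t^2L}\big)\, L^{1/2} f .
\]
Again \(\nabla L^{-1/2}\) is bounded on \(L^2\) by \refprop{cor:NablaL^-1/2Bounded}, the analyticity of the semigroup gives the standard bound \(\Vert L e^{-t^2L}\Vert_{L^2\to L^2}\lesssim (t^2)^{-1}=t^{-2}\) (this is the \(l=1\) case of the first bullet in \refprop{prop:L2NormBoundsOfHeatSemigroup} combined with \(\partial_t e^{-tL}=-Le^{-tL}\)), and \(\Vert L^{1/2}f\Vert_{L^2}\lesssim \Vert \nabla f\Vert_{L^2}\) by Kato. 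Multiplying the constants, the factor \(t\cdot t^{-2}=t^{-1}\) survives and we obtain \(\Vert \nabla \partial_t e^{-t^2L} f\Vert_{L^2}\lesssim t^{-1}\Vert \nabla f\Vert_{L^2}\), as desired.

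The only mildly delicate points, which I would address briefly, are justifying the operator factorizations on the correct domains (all identities should first be checked on \(f\) in a dense subspace such as the domain of \(L\), e.g. \(C_0^\infty\), where \(L^{1/2}\) and \(L^{-1/2}\) commute with \(e^{-tL}\) through the holomorphic functional calculus, and then extended by density using the uniform bounds just proved) and confirming that the analytic-semigroup estimate \(\Vert L e^{-sL}\Vert_{L^2\to L^2}\lesssim s^{-1}\) is available in this generality — but this is exactly \refprop{prop:L2NormBoundsOfHeatSemigroup} applied with \(L\) in place of the \(t\)-independent operator there, or a standard consequence of \(-L\) generating a bounded analytic semigroup on \(L^2\). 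I do not expect any real obstacle: the lemma is, as the paper says, "very basic," and the whole point is that Kato's theorem converts the \(L^2\)-norm of \(\nabla\) into the \(L^2\)-norm of \(L^{1/2}\), after which everything reduces to uniform boundedness and analyticity of the heat semigroup.
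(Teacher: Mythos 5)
Your proof is correct. For the first inequality it is essentially identical to the paper's argument: the paper chains \(\Vert \nabla e^{-tL}f\Vert_{L^2}\lesssim \Vert L^{1/2}e^{-tL}f\Vert_{L^2}=\Vert e^{-tL}L^{1/2}f\Vert_{L^2}\lesssim \Vert L^{1/2}f\Vert_{L^2}\lesssim \Vert\nabla f\Vert_{L^2}\), which is the same factorization you write with \(\nabla L^{-1/2}\). For the second inequality, however, you take a genuinely different route. The paper argues by ellipticity and integration by parts: it bounds \(\Vert\nabla\partial_t e^{-t^2L}f\Vert_{L^2}^2\) by \(\int A\nabla\partial_t e^{-t^2L}f\cdot\nabla\partial_t e^{-t^2L}f\,dx\), rewrites \(L\partial_t e^{-t^2L}f\) in terms of \(\partial_t e^{-t^2L}f\) and \(\partial_{tt}e^{-t^2L}f\) via the identity \(\partial_t e^{-t^2L}=-2tLe^{-t^2L}\), applies Cauchy--Schwarz, and then invokes the kernel bounds for \(\partial_t e^{-t^2L}\) and \(t\partial_{tt}e^{-t^2L}\) from Proposition 4.3 of \cite{hofmann_lp_2022} to get \(\Vert\partial_{tt}e^{-t^2L}f\Vert_{L^2}\lesssim t^{-1}\Vert\nabla f\Vert_{L^2}\) and \(\Vert\partial_t e^{-t^2L}f\Vert_{L^2}\lesssim\Vert\nabla f\Vert_{L^2}\). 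Your argument instead stays entirely within the holomorphic functional calculus: the factorization \(-2t(\nabla L^{-1/2})(Le^{-t^2L})L^{1/2}\), combined with the Kato bounds on both ends and the analyticity estimate \(\Vert Le^{-sL}\Vert_{L^2\to L^2}\lesssim s^{-1}\) (the \(l=1\) case of \refprop{prop:L2NormBoundsOfHeatSemigroup} via \(\partial_s e^{-sL}=-Le^{-sL}\)), gives the factor \(t\cdot t^{-2}=t^{-1}\) directly. Your route is arguably cleaner and more self-contained, since it avoids the appeal to external kernel estimates for the second time derivative of the semigroup; the paper's route has the (minor) advantage of not needing to commute \(L^{1/2}\) through \(e^{-t^2L}\), though as you note that commutation is unproblematic on a dense subspace. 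Your remarks on justifying the factorizations by density are exactly the right caveat, and there is no gap.
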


\begin{proof}
    The first inequality is an easy corollary from the Kato conjecture, i.e. we have with \refprop{prop:KatoConjecture} and \refprop{prop:L2NormBoundsOfHeatSemigroup}
    \begin{align*}
        \Vert \nabla e^{-tL}f\Vert_{L^2(\mathbb{R}^n)}&\lesssim \Vert L^{1/2} e^{-tL}f\Vert_{L^2(\mathbb{R}^n)}\lesssim \Vert  e^{-tL}L^{1/2} f\Vert_{L^2(\mathbb{R}^n)}
        \\
        &\lesssim \Vert L^{1/2} f\Vert_{L^2(\mathbb{R}^n)}\lesssim \lesssim \Vert  \nabla f\Vert_{L^2(\mathbb{R}^n)}.
    \end{align*}
    For the second one we have
    \begin{align*}
        \Vert\nabla \partial_t e^{-t^2L}f\Vert_{L^2(\mathbb{R}^n)}^2 &\lesssim \int_{\mathbb{R}^n} A\nabla \partial_t e^{-t^2L}f \cdot \nabla \partial_t e^{-t^2L}f dx
        \\
        &\lesssim \int_{\mathbb{R}^n} \frac{\partial_{tt} e^{-t^2L}f}{2t}\partial_t e^{-t^2L}f dx
        \\
        &\lesssim \frac{1}{t}\Vert\partial_{tt} e^{-t^2L}f\Vert_{L^2(\mathbb{R}^n)} \Vert\partial_t e^{-t^2L}f\Vert_{L^2(\mathbb{R}^n)}
    \end{align*}
    Again, we can observe in the proof of in \refprop{Proposition11} that the only properties of \(\partial_t e^{-t^2L}\) that were used are bounds of its kernel which can be found in Proposition 4.3 of \cite{hofmann_lp_2022}. However, the same Proposition gives the same bounds for \(t\partial_{tt}e^{-t^2L}\) and hence we get 
    \[\Vert\partial_{tt} e^{-t^2L}f\Vert_{L^2(\mathbb{R}^n)}\lesssim \frac{1}{t}\Vert\nabla f\Vert_{L^2(\mathbb{R}^n)}\qquad\textrm{and}\qquad \Vert\partial_{t} e^{-t^2L}f\Vert_{L^2(\mathbb{R}^n)}\lesssim \Vert\nabla f\Vert_{L^2(\mathbb{R}^n)}.\]
    So in total we have
    \[ \Vert\nabla \partial_t e^{-t^2L}f\Vert_{L^2(\mathbb{R}^n)}^2\lesssim \frac{1}{t^2}\Vert\nabla f\Vert_{L^2(\mathbb{R}^n)}^2.\]
\end{proof}

Furthermore, we have off-diagonal estimates for operators involving the heat semi group. We quote the following special case of \cite{hofmann_lp_2022}:
\begin{prop}\label{prop:off-diagonal}
    We say an operator family \(T=(T_t)_{t>0}\) satisfies \(L^2-L^2\) off-diagonal estimates, if there exists \(C,\alpha>0\) such that for all closed sets \(E\) and \(F\) and all \(t>0\)
    \[\Vert T_t(h)\Vert_{L^2(F)}\leq Ce^{-\alpha\frac{d(E,F)^2}{t}}\Vert h\Vert_{L^2(E)},\]
    where \(\supp(h)=E\) and \(d(E,F)\) is the semi-distance induced on sets by Euclidean distance.

    \medskip
    Then the families \((e^{-tL})_{t>0}, (t\partial_te^{-tL})_{t>0}\), and \((\sqrt{t}\nabla e^{-tL})_{t>0}\) satisfy \(L^2-L^2\) off-diagonal estimates.
\end{prop}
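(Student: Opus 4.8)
The plan is to establish the three off‑diagonal bounds in sequence, bootstrapping the last two from the first; the two ingredients throughout are the ellipticity inequality~\eqref{eq:DefinitionOfUniformElliptic} and the fact that, since \(L\) is \(t\)-independent, \(-L\) generates a bounded analytic semigroup on \(L^2(\R^n)\) of some angle \(\nu=\nu(\lambda_0)\in(0,\pi/2)\). \emph{First, the semigroup itself, via Davies' exponential perturbation.} For a bounded Lipschitz \(\varphi\) with \(\|\nabla\varphi\|_{L^\infty}\le 1\) and a parameter \(\rho>0\) I would pass to the conjugated operator \(L_\rho u:=e^{\rho\varphi}L(e^{-\rho\varphi}u)\), with associated form \(\mathfrak a_\rho[u,v]=\int_{\R^n}A\,\nabla(e^{-\rho\varphi}u)\cdot\nabla(e^{\rho\varphi}v)\). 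Expanding the gradients and using \eqref{eq:DefinitionOfUniformElliptic} together with \(|\nabla\varphi|\le 1\) gives a lower bound \(\operatorname{Re}\mathfrak a_\rho[u,u]\ge\tfrac{\lambda_0}{2}\|\nabla u\|_{L^2}^2-C\rho^2\|u\|_{L^2}^2\) and a matching upper bound, so \(L_\rho+C\rho^2\) is maximal accretive and sectorial of angle \(<\pi/2\); consequently \(\|e^{\rho\varphi}e^{-tL}e^{-\rho\varphi}\|_{L^2\to L^2}=\|e^{-tL_\rho}\|\le e^{C\rho^2 t}\), and the same argument after rotating the contour gives \(\|e^{\rho\varphi}e^{-zL}e^{-\rho\varphi}\|\lesssim e^{C\rho^2|z|}\) on a subsector \(\Sigma_{\nu'}\subset\Sigma_\nu\). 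Given closed sets \(E,F\) and \(h\) with \(\supp h=E\), I would choose \(\varphi\) to be a Lipschitz truncation of \(\Dist(\cdot,E)\), so \(\varphi\equiv 0\) on \(E\) and \(\varphi\ge\Dist(E,F)\) on \(F\); then \(e^{\rho\Dist(E,F)}\|e^{-tL}h\|_{L^2(F)}\le\|e^{\rho\varphi}e^{-tL}h\|_{L^2}\le e^{C\rho^2 t}\|h\|_{L^2(E)}\), and optimizing \(\rho=\Dist(E,F)/(2Ct)\) yields \(\|e^{-tL}h\|_{L^2(F)}\le e^{-\alpha\Dist(E,F)^2/t}\|h\|_{L^2(E)}\) and, in the same way, \(\|e^{-zL}h\|_{L^2(F)}\lesssim e^{-\alpha\Dist(E,F)^2/|z|}\|h\|_{L^2(E)}\) for \(z\in\Sigma_{\nu'}\).

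\emph{Next, the time derivative.} Since \(z\mapsto e^{-zL}\) is analytic on \(\Sigma_\nu\), I would use Cauchy's formula on a circle \(|\zeta-t|=\varepsilon t\), with \(\varepsilon\) a small fixed constant chosen so that the circle lies inside \(\Sigma_{\nu'}\): \(t\partial_t e^{-tL}=\frac{t}{2\pi i}\oint_{|\zeta-t|=\varepsilon t}(\zeta-t)^{-2}e^{-\zeta L}\,d\zeta\). Inserting the complex off-diagonal bound from the first step and using \(|\zeta|\le(1+\varepsilon)t\) on the circle produces \(\|t\partial_t e^{-tL}h\|_{L^2(F)}\lesssim e^{-\alpha'\Dist(E,F)^2/t}\|h\|_{L^2(E)}\). (One could instead write \(-tLe^{-tL}=(-tLe^{-tL/2})\circ e^{-tL/2}\) and invoke the fact that a composition of two off-diagonal families is again off-diagonal, but the Cauchy route avoids treating \(sLe^{-sL}\) separately.)

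\emph{Finally, the gradient, via a weighted Caccioppoli estimate.} Write \(r=\Dist(E,F)\) and \(u=e^{-tL}h\). When \(r\le\sqrt t\) it suffices to use the uniform bound \(\|\sqrt t\,\nabla e^{-tL}\|_{L^2\to L^2}\lesssim 1\) — itself a consequence of \(\|\nabla u\|_{L^2}^2\lesssim\operatorname{Re}\mathfrak a[u,u]=-\operatorname{Re}\langle\partial_t u,u\rangle\le\|\partial_t u\|_{L^2}\|u\|_{L^2}\lesssim t^{-1}\|h\|_{L^2}^2\) by the previous step — because \(e^{-\alpha r^2/t}\) is then bounded below by a constant. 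When \(r>\sqrt t\), I would pick \(\eta\in C_c^\infty(\R^n)\) with \(\eta\equiv 1\) on \(F\), \(\supp\eta\subset\widehat F:=\{\Dist(\cdot,F)<r/2\}\), and \(|\nabla\eta|\lesssim 1/r\); testing the heat equation \(\partial_t u=\Div(A\nabla u)\) against \(\eta^2 u\) and absorbing the term carrying \(\nabla\eta\) gives \(\int\eta^2|\nabla u|^2\lesssim\|\partial_t u\|_{L^2(\widehat F)}\|u\|_{L^2(\widehat F)}+r^{-2}\|u\|_{L^2(\widehat F)}^2\). Since \(\Dist(E,\widehat F)\ge r/2\), the two earlier steps bound \(\|u\|_{L^2(\widehat F)}\) and \(\|\partial_t u\|_{L^2(\widehat F)}=t^{-1}\|t\partial_t e^{-tL}h\|_{L^2(\widehat F)}\) by a constant times \(e^{-\alpha r^2/(4t)}\|h\|_{L^2(E)}\), respectively \(t^{-1}\) times that, and using \(r^{-2}<t^{-1}\) to absorb the last term one arrives at \(\|\sqrt t\,\nabla e^{-tL}h\|_{L^2(F)}\lesssim e^{-\alpha''r^2/t}\|h\|_{L^2(E)}\).

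The step I expect to be the main obstacle is the gradient bound. Unlike the time derivative it cannot be reached by holomorphic functional calculus alone, since the Riesz transform \(\nabla L^{-1/2}\) is non-local, so one is forced into the energy argument, and the polynomial prefactor \(r^{-2}+t^{-1}\) it produces must be reconciled with the Gaussian factor — which is exactly why the case split \(r\le\sqrt t\) versus \(r>\sqrt t\), together with the uniform \(L^2\) bound in the first regime, is needed. A secondary technical point, on which the second step rests, is checking that the Davies perturbation is stable under rotation of the contour, i.e. that the sector of accretivity of \(L_\rho\) and the off-diagonal bound persist for complex time \(z\in\Sigma_{\nu'}\).
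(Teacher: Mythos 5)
The paper does not actually prove this proposition; it is stated with the preface ``We quote the following special case of \cite{hofmann_lp_2022}'' and the proof is delegated entirely to that reference. So there is no in-paper argument to compare your proposal against, only the standard literature route, and your write-up is exactly that route: Davies' exponential perturbation for the semigroup, analyticity plus Cauchy's integral formula for $t\partial_t e^{-tL}$, and a weighted Caccioppoli inequality with the $r\le\sqrt{t}$ / $r>\sqrt{t}$ case split for $\sqrt{t}\,\nabla e^{-tL}$. The key computations you outline are all correct: the form lower bound $\operatorname{Re}\mathfrak a_\rho[u,u]\ge\frac{\lambda_0}{2}\|\nabla u\|^2-C\rho^2\|u\|^2$ with the uniform-in-$\rho$ sector angle, the optimization $\rho=\Dist(E,F)/(2Ct)$ producing the Gaussian factor, the $(\varepsilon t)^{-2}\cdot\varepsilon t$ bookkeeping in the Cauchy contour, and the absorption of the $\nabla\eta$ term combined with $r^{-2}<t^{-1}$. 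Two cosmetic remarks: you write $\eta\in C_c^\infty$, but for an unbounded $F$ you need a Lipschitz cutoff that is $1$ on $F$ and supported in $\widehat F$ without compact support (or a truncation-and-limit argument); and one should say explicitly that $e^{\rho\varphi}h=h$ because $\varphi\equiv 0$ on $E=\supp h$, which is what turns $\|e^{\rho\varphi}e^{-tL}h\|_{L^2}$ into $\|e^{-tL_\rho}h\|_{L^2}$. Neither affects correctness.
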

\section{Hodge decomposition, Approximation function \(\rho\) and difference function \(\theta\)}

\subsection{Hodge decomposition}
For each \(s>0\) we can find a Hodge decomposition consisting of \(\phi^s\in W_0^{1,2}(3\Delta)\) and \( h^s\in L^2(3\Delta)\), where \(h^s\) is divergence free and
\[c(x,s)\chi_{3\Delta}(x)=\AP(x,s)\partial_x\phi^s(x)+h^s(x).\]
Since this is a PDE in one dimension, if we write \(3\Delta=(a, a+3l(\Delta))\), the divergence free function is the constant 
\[h^s=\frac{\fint_{a}^{a+3l(\Delta)} \frac{c(x,s)}{\AP(x,s)}dx}{\fint_{a}^{a+3l(\Delta)} \frac{1}{\AP(x,s)}dx},\]
and 
\[\phi^s(y)=\int_{a}^y \frac{c(x,s)- h^s}{\AP(x,s)} dx\qquad \textrm{ for }x\in 3\Delta. \]

Hence we obtain the following uniform bounds on \(\partial_x \phi^s\) and \(\partial_s\partial_x\phi^s\):
\begin{lemma}\label{lemma:UniformBoundOnM[nablaphi^s]}
There exists \(\kappa_0>0\) such that
\[M[|\partial_x\phi^s|](x)\leq C(\lambda,\Lambda_0)l(\Delta)\leq \kappa_0,\]
and hence
\[\Vert M[\partial_x\phi^s]\Vert_{L^2}\leq C(\lambda,\Lambda_0)l(\Delta)\leq \kappa_0 |\Delta|^{1/2}.\]
Furthermore, we have for every \(x\in 3\Delta\)
\[|\partial_s\partial_x \phi^s(x)|\lesssim |\partial_s A(x,s)| + \fint_{3\Delta}|\partial_s A(y,s)| dy.\]
\end{lemma}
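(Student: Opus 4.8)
The plan is to argue entirely from the explicit one‑dimensional Hodge formulas displayed just above: in $n=1$ the decomposition is completely explicit, so every bound reduces to elementary manipulations once all denominators are controlled from below by uniform ellipticity. Throughout I would use that testing \eqref{eq:DefinitionOfUniformElliptic} against $\xi=(1,0)^T$ gives $\lambda_0\le\AP(x,s)\le\lambda_0^{-1}$ a.e., and that boundedness of the coefficients gives $|c(x,s)|\le\Lambda_0$ as well as $|\partial_s c|,|\partial_s\AP|\le|\partial_s A|$.

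\emph{Step 1: the bounds on $\partial_x\phi^s$.} Since $\partial_x\phi^s(x)=\bigl(c(x,s)\chi_{3\Delta}(x)-h^s\bigr)/\AP(x,s)$, extended by zero off $3\Delta$, and the formula for $h^s$ gives
\[|h^s|\le\frac{\fint_{3\Delta}|c(x,s)|/\AP(x,s)\,dx}{\fint_{3\Delta}1/\AP(x,s)\,dx}\le\frac{\Lambda_0\lambda_0^{-1}}{\lambda_0}=\Lambda_0\lambda_0^{-2},\]
one obtains the pointwise bound $|\partial_x\phi^s(x)|\le\lambda_0^{-1}\Lambda_0(1+\lambda_0^{-2})\,\chi_{3\Delta}(x)=:C(\lambda_0,\Lambda_0)\chi_{3\Delta}(x)$. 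Hence $M[|\partial_x\phi^s|]\le C(\lambda_0,\Lambda_0)$ everywhere, which (using $l(\Delta)\le R$) yields the first displayed estimate with $\kappa_0$ depending only on $\lambda_0,\Lambda_0,R$; moreover, since $M$ is bounded on $L^2(\mathbb{R})$ and $\|\partial_x\phi^s\|_{L^2(\mathbb{R})}^2=\int_{3\Delta}|\partial_x\phi^s|^2\,dx\le C(\lambda_0,\Lambda_0)^2\,|3\Delta|\lesssim l(\Delta)\sim|\Delta|$, it follows that $\|M[\partial_x\phi^s]\|_{L^2}\lesssim|\Delta|^{1/2}$, which is the second displayed estimate.

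\emph{Step 2: the bound on $\partial_s\partial_x\phi^s$.} Differentiating the explicit formula for $\partial_x\phi^s$ in $s$ — legitimate for a.e.\ $x,s$ under the standing hypotheses, since $|\partial_tA|\,t\le C$ forces $|\partial_sA(\cdot,s)|\le C/s$ a.e., in particular bounded in $x$ for fixed $s$, so one may differentiate under $\int_{3\Delta}$ — gives
\[\partial_s\partial_x\phi^s(x)=\frac{\partial_s c(x,s)}{\AP(x,s)}-\frac{(c(x,s)-h^s)\,\partial_s\AP(x,s)}{\AP(x,s)^2}-\frac{\partial_s h^s}{\AP(x,s)}.\]
The first two terms are $\lesssim|\partial_sA(x,s)|$ by the bounds collected above together with $|c-h^s|\le C(\lambda_0,\Lambda_0)$ and $\AP\ge\lambda_0$. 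For the last term write $h^s=F(s)/G(s)$ with $F(s)=\fint_{3\Delta}c(x,s)/\AP(x,s)\,dx$ and $G(s)=\fint_{3\Delta}1/\AP(x,s)\,dx$; then
\[\partial_s h^s=\frac{F'(s)}{G(s)}-\frac{F(s)G'(s)}{G(s)^2},\qquad |F'(s)|+|G'(s)|\lesssim\fint_{3\Delta}|\partial_sA(y,s)|\,dy,\]
the last estimate again using $\lambda_0\le\AP\le\lambda_0^{-1}$ and $|c|\le\Lambda_0$. Since $G(s)\ge\lambda_0$ and $|F(s)|\le\Lambda_0\lambda_0^{-1}$ this yields $|\partial_s h^s|\lesssim\fint_{3\Delta}|\partial_sA(y,s)|\,dy$, and combining the three contributions gives exactly $|\partial_s\partial_x\phi^s(x)|\lesssim|\partial_sA(x,s)|+\fint_{3\Delta}|\partial_sA(y,s)|\,dy$.

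I do not expect a genuine obstacle: the lemma is a computation made possible by the explicit $1$D Hodge decomposition, and uniform ellipticity keeps every denominator ($\AP$ and $G(s)$) bounded away from zero. The one place requiring a little care is $\partial_s h^s$, because $h^s$ is a quotient of $s$‑dependent averages, so its derivative produces two terms ($F'/G$ and $FG'/G^2$) and one has to verify that \emph{both} are dominated by the single quantity $\fint_{3\Delta}|\partial_sA|$ — which works precisely because $\AP$, and hence $G$, stay comparable to $1$.
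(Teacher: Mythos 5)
Your proposal is correct and is essentially the proof the paper intends: the lemma is stated as an immediate consequence of the explicit one-dimensional formulas for \(h^s\) and \(\phi^s\), and your Step 1 (uniform ellipticity controlling \(\AP\) and \(G(s)\) from below, boundedness controlling \(c\) and \(h^s\) from above) and Step 2 (quotient rule on \(\partial_x\phi^s=(c-h^s)/\AP\) with the two terms \(F'/G\) and \(FG'/G^2\) from \(\partial_s h^s\)) are exactly that computation spelled out. Your handling of the factor \(l(\Delta)\) via \(l(\Delta)\le R\) is also the right reading of the (somewhat loosely stated) first two displays.
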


Let \(\eta>0\) be a parameter to be determined later. We define 
\[w(x,t,s):=e^{-tL^s}\phi^s(x)\]
as the solution to the ("t-independent") heat equation
\[\begin{cases} \partial_t w(x,t,s)-L^s w(x,t,s)=0 &,(x,t)\in \Omega \\ w(x,0,s)=\phi^s(x) &,x\in\partial\Omega\end{cases}\]
using the heat semigroup.

We define the by \(\eta\) scaled \textit{approximation function} with ellipticized homogeneity
\[\rho_\eta(x,s):=w(x,\eta^2s^2,s),\]
and the \textit{difference function}
\[\theta_\eta(x,s):=\phi^s(x)-\rho_\eta(x,s).\]

As a consequence of \reflemma{lemma:UniformBoundOnM[nablaphi^s]} we have the following bound on the difference function.

\begin{lemma}\label{lemma:thetaPointwiseBoundOnGoodSetF}
For all \((x,s)\in T(3\Delta)\)
\[\theta_\eta(x,s)\leq C(\lambda_0,\Lambda_0)\kappa_0\eta s.\]
Furthermore, we have any aperture \(\alpha>0\) and
\[\fint_{(x- {\eta\alpha} s, x+{\eta\alpha} s)}|\partial_x\rho_\eta(y,s)|^2 dy \lesssim_{\alpha,\eta}M[\partial_x \phi^s]^2(x)\lesssim \kappa_0^2\]
\end{lemma}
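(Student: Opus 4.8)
The plan is to prove the two assertions separately, using only the formula for $\rho_\eta$ in terms of the heat semigroup together with the uniform bounds on $\partial_x\phi^s$ from Lemma 3.4. For the first assertion, write $\theta_\eta(x,s) = \phi^s(x) - e^{-\eta^2 s^2 L^s}\phi^s(x)$ and use the fundamental theorem of calculus in the semigroup parameter:
\begin{equation*}
\theta_\eta(x,s) = \phi^s(x) - e^{-\eta^2 s^2 L^s}\phi^s(x) = -\int_0^{\eta^2 s^2} \partial_\tau\big(e^{-\tau L^s}\phi^s\big)(x)\, d\tau = \int_0^{\eta^2 s^2} L^s e^{-\tau L^s}\phi^s(x)\, d\tau.
\end{equation*}
The cleaner route, avoiding pointwise bounds on $L^s e^{-\tau L^s}\phi^s$, is to note that since $L^s$ is $t$-independent (for fixed $s$) and $\phi^s \in W_0^{1,2}(3\Delta)$, Proposition 3.5 in the form $\eta^{-1}\partial_t e^{-(\eta t)^2 L}f(y) \lesssim M(\nabla f)(x)$ applies: evaluating at the ``time'' that makes $(\eta t)^2 = \eta^2 s^2$, i.e. at $t = s$, and integrating $\partial_\tau e^{-\tau L^s}\phi^s$ from $\tau=0$ to $\tau = \eta^2 s^2$ gives
\begin{equation*}
|\theta_\eta(x,s)| = \Big| \int_0^{\eta^2 s^2} \partial_\tau e^{-\tau L^s}\phi^s(x)\, d\tau \Big| \lesssim \eta s \cdot \sup_{0<\tau\le \eta^2 s^2} \big(\tau^{1/2}|\partial_\tau e^{-\tau L^s}\phi^s(x)|\big) \cdot \frac{1}{\eta s}\int_0^{\eta^2 s^2}\frac{d\tau}{\sqrt\tau} \lesssim \eta s\, M[\partial_x\phi^s](x),
\end{equation*}
and then Lemma 3.4 bounds $M[\partial_x\phi^s](x) \lesssim \kappa_0$, which yields $\theta_\eta(x,s) \le C(\lambda_0,\Lambda_0)\kappa_0\eta s$. (One should double-check that $(x,s)\in T(3\Delta)$ keeps the relevant balls inside $3\Delta$ so that Lemma 3.4 is applicable, shrinking $R$ if necessary; this is the kind of routine bookkeeping the paper does throughout.)

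For the second assertion I would argue similarly but one derivative higher. Fix $s$ and set $L = L^s$, $f = \phi^s$, so $\partial_x\rho_\eta(y,s) = \nabla e^{-\eta^2 s^2 L}f(y)$. I want an $L^2$-averaged bound over the ball $(x-\eta\alpha s, x+\eta\alpha s)$, which is exactly the object controlled by the mean-valued nontangential maximal function $\tilde N_{\eta\alpha}$ from Proposition 3.7 (Proposition 12 of \cite{hofmann_dirichlet_2022}): that proposition gives $\|\tilde N_\eta[\nabla e^{-(\eta t)^2 L}f]\|_{L^p} \lesssim \|\nabla f\|_{L^p}$, and its pointwise form — which is what the proof actually establishes — says precisely that for $(y,\tau)$ in the cone of aperture $\eta\alpha$ over $x$, the $L^2$ average of $\nabla e^{-(\eta\tau)^2 L}f$ over the ball of radius $\tau/2$ about $y$ is dominated by $M[\nabla f](x)$ (or a suitable power thereof). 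Taking $\tau = s$ and $y = x$, the ball of radius $\eta\alpha s$ is contained in the $\tilde N_{\eta\alpha}$-averaging region, so
\begin{equation*}
\fint_{(x-\eta\alpha s,\, x+\eta\alpha s)} |\partial_x\rho_\eta(y,s)|^2\, dy \lesssim_{\alpha,\eta} M[\partial_x\phi^s]^2(x) \lesssim \kappa_0^2,
\end{equation*}
again invoking Lemma 3.4 for the last step.

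The only genuine obstacle is a bookkeeping one: making sure the $L^2$-averaging balls and the Hardy–Littlewood maximal functions appearing in Propositions 3.5 and 3.7 are taken over regions contained in $3\Delta$, since $\phi^s$ and its bounds from Lemma 3.4 live only on $3\Delta$ (and $\phi^s \in W_0^{1,2}(3\Delta)$ is extended by zero). For $(x,s)\in T(3\Delta)$ and apertures $\eta,\alpha$ of the sizes we will ultimately choose (with $\eta$ small), the relevant balls have radius comparable to $s < l(3\Delta)$ centered near $x\in 3\Delta$, so after possibly passing to the slightly smaller region $T(2\Delta)$ or shrinking $R$, the containment holds and the maximal functions and off-diagonal behaviour only see the zero-extension harmlessly. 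Everything else is a direct quotation of Propositions 3.5 and 3.7 combined with Lemma 3.4, plus the elementary integration in the semigroup parameter.
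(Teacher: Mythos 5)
Your proof of the first bound is correct but follows a different route from the paper's one-line justification. The paper simply asserts that the bound ``follows from the maximum principle for parabolic operators,'' which is quite opaque (the author likely means the fundamental-theorem-of-calculus argument you give; indeed, a direct maximum-principle comparison for $\theta_\eta$ is not straightforward since $\theta_\eta$ does not satisfy a clean parabolic equation). Your integration of $\partial_r e^{-(\eta r)^2 L^s}\phi^s$ from $r=0$ to $r=s$ and appeal to Proposition \ref{Proposition11}, together with Lemma \ref{lemma:UniformBoundOnM[nablaphi^s]}, is arguably the cleaner and more explicit argument, and the two are morally the same.

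For the second bound your route is genuinely different from the paper's. The paper performs a Caccioppoli-type argument: it introduces a cut-off $\psi$, writes $\fint A\partial_x\rho_\eta\cdot\partial_x\rho_\eta\psi^2$ as two terms via integration by parts, converts the $L^s\rho_\eta$ term to $w_t/s=\partial_te^{-\eta^2t^2L^s}\phi^s/s$, and invokes Corollary \ref{cor:PointwiseBoundsofw_tAndrhoByM(nablaphi^s)} to bound the Poincar\'e-type factors; the hiding trick then closes the estimate. You instead quote what you call the ``pointwise form'' of Proposition \ref{Proposition12}. This is a shortcut you should be careful with: the proposition as stated is only an $L^p\to L^p$ bound for $p>2$, and that range restriction is a strong hint that the pointwise domination its proof establishes is by $M\bigl[\,|\nabla f|^2\,\bigr]^{1/2}$ (the $L^2$-maximal function), not by $M[\nabla f]$. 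That would give $\fint|\partial_x\rho_\eta|^2\lesssim M\bigl[|\partial_x\phi^s|^2\bigr](x)$ rather than $M[\partial_x\phi^s]^2(x)$ as literally stated in the lemma. In the present setting the distinction is harmless because $\partial_x\phi^s=(c-h^s)/\AP$ is bounded pointwise by an ellipticity constant, so both quantities are $\lesssim\kappa_0^2$ and the final assertion survives; but if you use the pointwise form of Proposition \ref{Proposition12} you should state explicitly which maximal function appears and verify that its proof actually yields a pointwise bound, since the paper itself avoids this by doing the Caccioppoli argument from scratch.
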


\begin{proof}
    The first statement follows from the maximum principle for parabolic operators. For the second statement we have with a smooth cut-off function \(\psi\in C^\infty\) with \(\mathrm{supp}(\psi)\subset (x-2{\eta\alpha} s, x+2{\eta\alpha} s )\) and \(\psi\equiv 1\) on \((x-{\eta\alpha} s, x+{\eta\alpha} s )\), while \(|\partial_t\psi| + |\partial_x\psi|\lesssim \frac{1}{{\EPS\eta\alpha} s}\). We choose the measn value \(m:=\fint_{(x-2{\eta\alpha} s, x+2{\eta\alpha} s )}\phi^s(x) dx=(\phi^s(\cdot))_{B_x(x, 2\eta\alpha s)}\). Then we get
\begin{align*}
    \fint_{(x-2{\eta\alpha} s, x+2{\eta\alpha} s )}|\partial_x\rho_\eta|^2\psi^2 dy&\leq \fint_{(x-2{\eta\alpha} s, x+2{\eta\alpha} s )}\AP\partial_x\rho_\eta\cdot \partial_x\rho_\eta \psi^2 dy
    \\
    &=\fint_{(x-2{\eta\alpha} s, x+2{\eta\alpha} s ))}\AP\partial_x\rho_\eta\cdot \partial_x((\rho_\eta-m) \psi^2) dy
    \\
    &\qquad + \fint_{(x-2{\eta\alpha} s, x+2{\eta\alpha} s )}\AP\partial_x\rho_\eta\cdot (\rho_\eta-m) \psi\partial_x\psi dy \eqcolon I+J.
\end{align*}
Since \(s L^s \rho_\eta(x,s)=w_t(x,s)\) we have by \refcor{cor:PointwiseBoundsofw_tAndrhoByM(nablaphi^s)}
\[I=\fint_{(x-2{\eta\alpha} s, x+2{\eta\alpha} s )}\partial_t e^{-\eta^2t^2L^s}\phi^s|_{t=s} \frac{(\rho_\eta(y,s)-m)\psi^2}{\eta\alpha s} dy\lesssim M[\partial_x\phi^s](P)^2\]
for every \((P,s)\in \Gamma_{\eta\alpha}(x)\). Choosing \(P\in F\) leads with \reflemma{lemma:UniformBoundOnM[nablaphi^s]} to \(I\lesssim \kappa_0^2\).

\medskip

For \(J\) we can calculate for \(0<\sigma<1\)
\begin{align*}
J&\lesssim \Big(\fint_{(1+\EPS)B_x(x,\eta\alpha s)}|\partial_x\rho_\eta|^2\psi^2 dy\Big)^{1/2}\Big(\fint_{(1+\EPS)B_x(x,\alpha\eta s)}\frac{|\rho_\eta-m|^2}{s^2} dy\Big)^{1/2}
\\
&\lesssim \sigma \fint_{(1+\EPS)B_x(x,\alpha\eta s)}|\partial_x\rho_\eta|^2\psi^2 dy+ C_\sigma M[\partial_x\phi^s](P)^2.
\end{align*}
Hiding the first term on the left side yields
\[\fint_{B_x(x,\alpha\eta s)}|\partial_x\rho_\eta|^2 dy\lesssim \kappa_0^2.\]
\end{proof}

The previous proof used the following result, which is a corollary of \refprop{Proposition11}. We introduce the function \(w_t\) in more detail in the next section when discussing all partial derivatives of \(\theta\).

\begin{cor}\label{cor:PointwiseBoundsofw_tAndrhoByM(nablaphi^s)}
    Let \(\eta>0,\alpha>0\). Then we have for all \((y,t)\in\Gamma_{\eta\alpha}(x)\)
    \begin{align} \eta^{-1}w_t(y,s)=\eta^{-1}\partial_te^{-(\eta t)^2L^s}\phi^s\bigg|_{t=s}(y)\lesssim M(\partial_x \phi^s)(x),\label{eq:w_tpointwiseBounded}\end{align}
    and
    \begin{align}\frac{\rho_\eta(y,s)}{\eta s}=\frac{e^{-(\eta s)^2L^s}(\phi^s-m)(y)}{s}\lesssim M(\partial_x \phi^s)(x),\label{eq:rhopointwiseBounded}\end{align}
    where
    \[m(s):=\fint_{B_x(x, 2\eta\alpha s)}\phi^s(x) dx=(\phi^s(\cdot))_{B_x(x, 2\eta\alpha s)}.\]
\end{cor}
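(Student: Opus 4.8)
The plan is to read off \eqref{eq:w_tpointwiseBounded} directly from \refprop{Proposition11}, and to prove \eqref{eq:rhopointwiseBounded} by a Gaussian heat‑kernel estimate that is organised around the cone vertex \(x\).

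For \eqref{eq:w_tpointwiseBounded}: for each fixed \(s>0\) the operator \(L^sv=\Div_x(\AP(\cdot,s)\nabla v)\) is uniformly elliptic and \(t\)-independent on \(\mathbb{R}\) with ellipticity constant \(\lambda_0\) (uniform in \(s\)), and \(\phi^s\in W_0^{1,2}(3\Delta)\subset W^{1,2}(\mathbb{R})\). I would apply \refprop{Proposition11} to \(L^s\) and \(f=\phi^s\) with scaling \(\eta\) and aperture \(\alpha\), and then simply read the conclusion at the height \(t=s\), i.e. at the point \((y,s)\in\Gamma_{\eta\alpha}(x)\); this is exactly \(\eta^{-1}\partial_t e^{-(\eta t)^2L^s}\phi^s(y)\big|_{t=s}\lesssim M(\partial_x\phi^s)(x)\), with constant depending only on \(\eta,\alpha,\lambda_0\).

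For \eqref{eq:rhopointwiseBounded}: since \(L^s1=0\), the heat semigroup fixes constants, so \(\rho_\eta(y,s)-m=e^{-(\eta s)^2L^s}(\phi^s-m)(y)\). Writing \(p^s_\tau\) for the kernel of \(e^{-\tau L^s}\) and using \(\int_{\mathbb{R}}p^s_\tau(y,z)\,dz=1\), I would split, with \(\tau=(\eta s)^2\),
\[
\rho_\eta(y,s)-m=\int_{\mathbb{R}}p^s_\tau(y,z)\big(\phi^s(z)-\phi^s(x)\big)\,dz+\big(\phi^s(x)-m\big)=:A+B .
\]
The term \(B\) is controlled by the one–dimensional Poincaré inequality on the ball \(B_x(x,2\eta\alpha s)\), which is centered at \(x\): \(|B|\lesssim\eta\alpha s\,\fint_{B_x(x,2\eta\alpha s)}|\partial_x\phi^s|\lesssim\eta\alpha s\,M[\partial_x\phi^s](x)\). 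For \(A\) I would invoke the Gaussian kernel bounds \(|p^s_\tau(y,z)|\lesssim\tau^{-1/2}e^{-c|y-z|^2/\tau}\) (the same kernel input used elsewhere in the paper, cf. Proposition 4.3 in \cite{hofmann_lp_2022}, with constants depending only on \(\lambda_0\)); since \(|y-x|<\eta\alpha s=\alpha\sqrt{\tau}\), the weight may be transferred from \(y\) to the vertex \(x\) at the cost of an \(\alpha\)-dependent constant, and a dyadic annular decomposition around \(x\), together with \(|\phi^s(z)-\phi^s(x)|\le\int_{[x,z]}|\partial_x\phi^s|\lesssim|z-x|\,M[\partial_x\phi^s](x)\) on each annulus and the rapid Gaussian decay, gives \(|A|\lesssim_\alpha\sqrt{\tau}\,M[\partial_x\phi^s](x)=\eta s\,M[\partial_x\phi^s](x)\). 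Combining the two estimates and dividing by \(s\) yields \eqref{eq:rhopointwiseBounded}.

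The one genuinely delicate point is transferring the maximal function of \(\partial_x\phi^s\) from the running point \(y\) to the cone vertex \(x\) in \eqref{eq:rhopointwiseBounded}. The naive route — writing \(\rho_\eta(y,s)-\phi^s(y)=\int_0^s\partial_t e^{-(\eta t)^2L^s}\phi^s(y)\,dt\) and applying \refprop{Proposition11} at each intermediate height \(t\) — breaks down, because \((y,t)\notin\Gamma_{\eta\alpha}(x)\) once \(t<s\), and \(M[\partial_x\phi^s](y)\) is not comparable to \(M[\partial_x\phi^s](x)\) in general. Working with the kernel at the single, height‑matched time \(\tau=(\eta s)^2\), at which the averaging scale \(\sqrt{\tau}=\eta s\) dominates \(|y-x|/\alpha\), is precisely what makes the transfer to \(x\) legitimate; checking that every constant depends only on \(\lambda_0\) and not on \(s\) is then routine, since the family \((L^s)_s\) is uniformly elliptic with the same constant.
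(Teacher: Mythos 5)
Your proof is correct and follows essentially the same route as the paper: part \eqref{eq:w_tpointwiseBounded} is read off directly from \refprop{Proposition11} applied to the \(t\)-independent operator \(L^s\), and part \eqref{eq:rhopointwiseBounded} rests on the Gaussian kernel bounds for the semigroup --- the paper merely remarks that the proof of \refprop{Proposition11} uses only such kernel bounds, which hold equally for \(t^{-1}e^{-t^2L}\), whereas you unpack that remark into an explicit splitting around the cone vertex. The care you take in transferring \(M[\partial_x\phi^s]\) from the running point \(y\) to the vertex \(x\) is precisely the content hidden in the paper's citation, so nothing is missing.
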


\begin{proof}
    The first inequality \eqref{eq:w_tpointwiseBounded} follows directly from \refprop{Proposition11}. For \eqref{eq:rhopointwiseBounded} we can observe in the proof of \refprop{Proposition11} that the only properties of \(\partial_t e^{-t^2L}\) that were used are bounds of its kernel which can be found in Proposition 4.3 of \cite{hofmann_lp_2022}. However, the same Proposition gives the same bounds for \(\frac{e^{-t^2L}}{t}\) and hence we get with a similar proof to that of \eqref{eq:w_tpointwiseBounded} also \eqref{eq:rhopointwiseBounded}. 
\end{proof}

\subsection{The different parts of the derivatives of \(\theta\)}

We will have to take the partial derivative of \(\theta_\eta\) in the transversal \(t\)-direction. Thereby, we see that
\[\partial_s \theta_\eta(x,s)=\partial_s \phi^s - \partial_s w(x,\eta^2s^2,s)=\partial_s\phi^s-\partial_t w(x,t,s)|_{t=\eta^2 s^2}-\partial_s w(x,t,s)|_{t=\eta^2 s^2}.\]

We can calculate the second and third term more explicitly. For the first of them we get
\begin{align*}
    \partial_t w(x,t,s)|_{t=\eta^2 s^2}=2\eta^2 s L^s w(x,\eta^2 s^2,s).
\end{align*}

For the second one we need to work a bit more. To start with we can observe that
\begin{align*}
    \partial_s\partial_t w(x,t,s)&=\partial_s \partial_x(\AP(x,s)\partial_x w(x,t,s))
    \\
    &=\partial_x(\partial_s \AP(x,s)\partial_x w(x,t,s))+L^s\partial_sw(x,t,s).
\end{align*}
We can now set \(v_1(x,t)\) as the solution to
\[\begin{cases} \partial_t v_1(x,t)=L^s v_1(x,t) &\textrm{for }(x,t)\in\Omega, \\ v_1(x,0)=\partial_s\phi^s(x) &\textrm{for }(x,0)\in\partial\Omega.\end{cases}\]
and \(v_2(x,t)\) as the solution to 
\[\begin{cases} \partial_t v_2(x,t)=L^s v_2(x,t) + \partial_x(\partial_s \AP(x,s)\partial_x w(x,t,s)) &\textrm{for }(x,t)\in\Omega, \\ v_2(x,0)=0 &\textrm{for }(x,0)\in\partial\Omega,\end{cases}\]

Since \(\partial_s w(x,0,s)=\partial_s\phi^s\) we note that \(\partial_s w\) and \(v_1+v_2\) satisfy the same linear PDE with same boundary data and hence must be equal. Next, we can give explicit representations for \(v_2\) by applying Duhamel's principle and for \(v_1\) by the heat semigroup. These are
\[v_1(x,t,s)=e^{-tL^s}\partial_s\phi^s(x),\]
and
\[v_2(x,t,s)=\int_0^t e^{-(t-\tau)L^s}\partial_x(\partial_s A(x,s)\partial_x w(x,\tau,s))d\tau.\]

Together we get for the derivative of \(\theta\) in \(t\)-direction
\begin{align*}
    \partial_s \theta(x,s)&=\partial_s\phi^s-2\eta^2 s L^s w(x,\eta^2 s^2,s)-e^{-\eta^2s^2L^s}\partial_s\phi^s(x)
    \\
    &\qquad-\int_0^{\eta^2s^2} e^{-(\eta^2s^2-\tau)L^s}\partial_x(\partial_s A(x,s)\partial_x w(x,\tau,s))d\tau
    \\
    &=\partial_s\phi^s-2\eta^2 s L^s w(x,\eta^2 s^2,s)-e^{-\eta^2s^2L^s}\partial_s\phi^s(x)
    \\
    &\qquad-\int_0^{s} 2\eta^2\tau e^{-\eta^2(s^2-\tau^2)L^s}\partial_x(\partial_s A(x,s)\partial_x w(x,\eta^2\tau^2,s))d\tau
    \\
    &\eqcolon \partial_s\phi^s(x)-w_t(x,s)-w_s^{(1)}(x,s)-w_s^{(2)}(x,s).
\end{align*}

\begin{lemma}\label{lemma:CaccTypeForw_t}
Let \(\tilde{Q}\subset\Omega\) be a cube and assume also \(4\tilde{Q}\subset\Omega\) and \(s\approx l(\tilde{Q})\). If we set
\[v(x,t,s):=\partial_t w(x,t^2,s)=\partial_t e^{-t^2L^s}\phi_s,\]
we have the Caccioppolli type inequality
\[\int_{\tilde{Q}}|\partial_x v|^2dxdt\lesssim \frac{1}{s^2}\int_{2\tilde{Q}}|v|^2dxdt.\]
\end{lemma}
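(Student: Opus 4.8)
The plan is to run the standard Caccioppoli argument, using the fact that $v(x,t,s) = \partial_t e^{-t^2 L^s}\phi_s$ solves a (backward-type) parabolic equation associated to the $t$-independent operator $L^s$. First I would identify the equation satisfied by $v$. Since $w(x,t,s) = e^{-tL^s}\phi^s$ satisfies $\partial_t w = L^s w$ (heat equation with $t$-independent generator $L^s$), and $v(x,t,s) = \partial_t w(x,t^2,s)$, one differentiates: with $\tilde w(x,t) := w(x,t^2,s)$ we have $\partial_t \tilde w = 2t\,L^s w|_{t^2}$, so $v = \partial_t \tilde w$ satisfies a parabolic-type equation with generator $L^s$ (after the change of variables $t \mapsto t^2$, the coefficients in the $t$-direction are smooth and controlled on $\tilde Q$ because $s \approx l(\tilde Q)$ keeps us away from the degenerate time $t=0$). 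The precise form is not important; what matters is that $v$ is a (weak) solution of a uniformly parabolic equation $\partial_t v = L^s v$ (up to a harmless smooth time-rescaling factor bounded above and below on $2\tilde Q$ since $t \approx s$ there), with $L^s$ uniformly elliptic by \eqref{eq:DefinitionOfUniformElliptic}.

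Next I would run the Caccioppoli inequality. Choose a cutoff $\psi \in C_c^\infty(2\tilde Q)$ with $\psi \equiv 1$ on $\tilde Q$, $0 \le \psi \le 1$, and $|\partial_x \psi| + |\partial_t \psi| \lesssim 1/l(\tilde Q) \approx 1/s$. Test the equation for $v$ against $v\psi^2$ and integrate over $2\tilde Q$: the term $\int L^s v \cdot v\psi^2 = -\int \AP(\cdot,s)\partial_x v \cdot \partial_x(v\psi^2)$ produces, after expanding $\partial_x(v\psi^2) = \partial_x v\, \psi^2 + 2v\psi\partial_x\psi$, the good term $\int \AP \partial_x v \cdot \partial_x v\, \psi^2 \gtrsim \lambda_0 \int_{\tilde Q}|\partial_x v|^2$ and a cross term bounded via Cauchy--Schwarz with parameter $\EPS$ by $\EPS \int |\partial_x v|^2\psi^2 + C_\EPS s^{-2}\int_{2\tilde Q}|v|^2$. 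The time term $\int \partial_t v \cdot v\psi^2 = \frac12 \int \partial_t(v^2)\psi^2 = -\int v^2 \psi \partial_t\psi$ is likewise absorbed into $s^{-2}\int_{2\tilde Q}|v|^2$. Hiding the small $\EPS$-term on the left yields $\int_{\tilde Q}|\partial_x v|^2 \lesssim s^{-2}\int_{2\tilde Q}|v|^2$, which is the claim. The smooth time-rescaling factor $2t$ (or $2\sqrt{t}$ after substitution) only contributes constants comparable to $s$, which combine with the other $s$-powers correctly because of the normalization $|\partial_t\psi| \approx 1/s$.

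The main obstacle I anticipate is bookkeeping the time-rescaling $t \mapsto t^2$ cleanly so that the equation for $v$ is genuinely of the uniformly parabolic form with coefficients bounded and bounded away from degeneracy on the relevant region; this is exactly why the hypotheses $4\tilde Q \subset \Omega$ and $s \approx l(\tilde Q)$ are imposed — they guarantee $t \approx s \approx l(\tilde Q)$ on $2\tilde Q$, so the factor $2t$ and its derivative are comparable to $s$ and $1$ respectively, and no blow-up at $t=0$ enters. A secondary technical point is justifying that $v$ is a legitimate weak solution one may test against $v\psi^2$ (i.e.\ $v \in W^{1,2}_{\mathrm{loc}}$ in $x$ with enough time regularity), but this follows from standard interior parabolic regularity for the $t$-independent operator $L^s$ together with the semigroup smoothing inherent in $e^{-t^2L^s}$, using the kernel/derivative bounds already quoted from \cite{hofmann_lp_2022} and \refprop{prop:L2NormBoundsOfHeatSemigroup}. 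Everything else is the routine Caccioppoli computation.
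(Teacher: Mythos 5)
Your proposal is correct and follows essentially the same route as the paper: you derive the PDE satisfied by $v$ from $\partial_t w = L^s w$ and the time-rescaling $t\mapsto t^2$, then run the standard Caccioppoli argument with a cutoff adapted to $2\tilde{Q}$. The paper writes the equation out explicitly as $\partial_t v = \tfrac{v}{t} - 2t L^s v$ and then simply invokes ``standard arguments for Caccioppoli inequalities,'' while you fill in the cutoff/absorption details; note that the extra first-order term $\tfrac{v}{t}$ (which you subsume under ``harmless factors'') does appear, but on $2\tilde{Q}$ where $t\approx s$ it is controlled by $s^{-1}\int_{2\tilde Q}|v|^2$ exactly as your sketch indicates.
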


The proof works analogoulsy to the proof of the other two Caccioppolli type inequalities in \reflemma{Lemma:CacciopolliTypeInequalities}.

\begin{proof}\label{remark:CaccForpartial_tw}
We observe that \(v\) satisfies the PDE
\begin{align*}
\partial_t v(x,t,s)&=\partial_t\partial_t w(x,t^2,s) = \partial_t (-2tL^s w(x,t^2,s)) 
\\
&= -2L^s w(x,t^2,s) - 2tL^s v(x,t,s)=\frac{v(x,t,s)}{t} - 2tL^s v(x,t^2,s).
\end{align*}
The rest of the proof follows standard arguments for Cacciopolli inequalities.
\end{proof}

 To proof \reflemma{lemma:SqFctBoundsForw_s^2} we need two more Cacciopolli type inequalities which are the following

 \begin{lemma}\label{Lemma:CacciopolliTypeInequalities}
     Let \(2\hat{Q}\subset \Omega\) be a Whitney cube, i.e. \(\mathrm{dist}(\hat{Q},\partial\Omega)\approx l(\hat{Q})\approx s\).
     If we set 
     \[v(x,t,s):=v_2(x,t^2,s)=\int_0^t 2\tau e^{-(t^2-\tau^2)L^s}\Div(\partial_s A(x,s)\partial_x e^{-\tau^2 L^s}\phi_s(x))d\tau,\]
     then we have
     \[\int_{\hat{Q}} |\partial_x v(x,t,s)|^2dxdt\lesssim \frac{1}{s^2}\int_{2\hat{Q}}|v(x,t,s)|^2dxdt + \int_{2\hat{Q}} |\partial\AP(x,s)\partial_x e^{-t^2L^s}\phi_s|^2dxdt,\]
     and
     \begin{align*}
         \int_{\hat{Q}} |\partial_x\partial_t v(x,t,s)|^2dxdt&\lesssim \frac{1}{s^2}\int_{2\hat{Q}}|\partial_t v(x,t,s)|^2dxdt
         \\
         & \qquad + \frac{1}{s^2}\int_{2\hat{Q}} |\partial_t\AP(x,s)\partial_x e^{-t^2L^s}\phi_s|^2dxdt
         \\
         &\qquad + \int_{2Q}|\partial_t\AP(x,s)\partial_x\partial_t e^{-t^2L^s}\phi_s|^2dxdt.
     \end{align*}
 \end{lemma}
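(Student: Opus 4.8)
The plan is to prove both inequalities in \reflemma{Lemma:CacciopolliTypeInequalities} by the standard Caccioppoli argument adapted to the inhomogeneous parabolic equations satisfied by $v$ and $\partial_t v$, exactly in the spirit of the proof sketched for \reflemma{lemma:CaccTypeForw_t}. First I would record the PDEs. Writing $F(x,t,s):=\partial_s\AP(x,s)\,\partial_x e^{-t^2L^s}\phi_s(x)$, Duhamel together with the chain rule (as already done for $w_s^{(2)}$ in the preceding subsection) shows that $v(x,t,s)=v_2(x,t^2,s)$ solves
\[
\partial_t v = \frac{v}{t} - 2tL^s v + 2t\,\partial_x F,
\]
i.e. the same equation as for $w_t$ but with the extra forcing term $2t\,\partial_x F$; the $v/t$ term comes from differentiating the $t^2$-rescaling and is harmless since $t\approx s\approx l(\hat Q)$ on the Whitney cube. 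Likewise, differentiating once more in $t$, the function $\partial_t v$ solves an equation of the same type with forcing $\partial_x$ of (a bounded combination of) $\partial_t F$ and $t^{-1}F$, which after using $t\approx s$ reduces to forcing terms $\partial_x(\partial_s\AP\,\partial_x e^{-t^2L^s}\phi_s)$ and $\partial_x(\partial_s\AP\,\partial_x\partial_t e^{-t^2L^s}\phi_s)$ — precisely the two last terms on the right-hand side of the second claimed inequality.

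Next I would run the Caccioppoli argument. Fix a cutoff $\psi\in C_c^\infty(2\hat Q)$ with $\psi\equiv1$ on $\hat Q$ and $|\partial_x\psi|+|\partial_t\psi|\lesssim s^{-1}$, which is legitimate since $l(\hat Q)\approx s$. Testing the equation for $v$ against $v\psi^2$ and integrating over $2\hat Q$, the parabolic term $\int \partial_t v\, v\psi^2$ produces a boundary-in-$t$ term plus $-\int v^2\psi\partial_t\psi$, both controlled by $s^{-2}\int_{2\hat Q}|v|^2$; the elliptic term $-2t\int L^s v\, v\psi^2 = 2t\int \AP\partial_x v\cdot\partial_x(v\psi^2)$ gives, after ellipticity and Young's inequality to absorb the cross term $\int \AP\partial_x v\cdot v\,\psi\partial_x\psi$, the good term $\sim t\int|\partial_x v|^2\psi^2$ minus $s^{-2}\int_{2\hat Q}|v|^2$; the $v/t$ term contributes only $s^{-2}\int_{2\hat Q}|v|^2$ after Cauchy–Schwarz; and the forcing term $2t\int\partial_x F\, v\psi^2 = -2t\int F\,\partial_x(v\psi^2)$ is split as $-2t\int F\partial_x v\,\psi^2 - 2t\int F v\,\psi\partial_x\psi$, the first absorbed into the good term and the second bounded by $s^{-2}\int_{2\hat Q}|v|^2 + \int_{2\hat Q}|F|^2$. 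Dividing by $t\approx s\approx l(\hat Q)$ and noting the $s$-powers match (this is the "ellipticized homogeneity" bookkeeping) yields the first inequality. For the second inequality I would repeat this verbatim with $v$ replaced by $\partial_t v$ and with the two forcing terms identified above, which is why three terms appear on the right.

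I would also make one remark that keeps the argument honest: on the Whitney cube $t$ is comparable to $s$ and to $l(\hat Q)$ from above and below, so all the factors of $t$ arising from the $t^2$-rescaling can be freely traded for $s$ without changing the scaling of any term; the same comment is what makes the $v/t$ term lower order rather than singular. The main obstacle — really the only place requiring care rather than bookkeeping — is the treatment of the forcing term $\partial_x F$: because $F$ carries a derivative $\partial_s\AP$ that is only in the $L^1$/$L^2$-Carleson class rather than pointwise small, one must be scrupulous to integrate the $\partial_x$ by parts onto the test function and onto $\partial_x v$ (never onto $F$), so that $F$ itself appears undifferentiated and can be collected into the harmless right-hand side term $\int_{2\hat Q}|\partial_s\AP\,\partial_x e^{-t^2L^s}\phi_s|^2$, which will later be estimated globally via the Carleson condition. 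Once the integration by parts is organized this way, the absorption via Young's inequality and the final division by $l(\hat Q)$ are routine.
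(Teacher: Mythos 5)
Your approach is the same as the paper's: derive the inhomogeneous parabolic equations satisfied by $v$ and $\partial_t v$, then run the standard Caccioppoli argument with a Whitney-scale cutoff, integrating the divergence-form forcing by parts so that $\partial_s\AP$ only ever appears undifferentiated; the paper itself writes down exactly these two PDEs and then declares the rest standard. One correction: your equation for $v$ has a spurious $v/t$ term. Since the factor $2\tau$ sits \emph{inside} the Duhamel integral, differentiating $v_2(x,t^2,s)$ in $t$ produces only the boundary term $2t\,\partial_x F$ and the term $-2tL^s v$; the $v/t$ term you carried over belongs to the function $\partial_t w(x,t^2,s)$ of \reflemma{lemma:CaccTypeForw_t}, where the prefactor $2t$ is multiplicative. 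This does not damage your argument — you correctly observe that such a term is absorbed into $s^{-2}\int_{2\hat Q}|v|^2$, and the true equation has one term fewer — but the identity you test is not the right one as written. For the second inequality, note also that differentiating $-2tL^s v$ produces $-2L^s v$, which is neither a forcing term nor a zeroth-order term; handling it requires substituting the first PDE back in (yielding $\tilde v/t$ plus another copy of $\Div F$), which is what the paper does and what your phrase ``equation of the same type'' glosses over.
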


\begin{proof}
    First we denote \(\tilde{v}:=\partial_t v\) to shorten notation. We observe that \(v\) and \(\tilde{v}\) satisfy the following PDEs
    \begin{align*}
        \partial_t v &= 2t\Div(\partial_s \AP\partial_x e^{-t^2L^s}\phi_s) - \int_0^t 4t\tau L^se^{-(t^2-\tau^2)L^s}\Div(\partial_s \AP(x,s)\partial_x e^{-\tau^2 L^s}\phi_s(x))d\tau
        \\
        &= 2t\Div(\partial_s \AP\partial_x e^{-t^2L^s}\phi_s) - 2t L^s v,
    \end{align*}
    and
    \begin{align*}
        \partial_t \tilde{v} &= \Div(\partial_s \AP\partial_x e^{-t^2L^s}\phi_s) + 2t\Div(\partial_s \AP\partial_x\partial_t e^{-t^2L^s}\phi_s) - 2 L^s v - 2t L^s \tilde{v}.
    \end{align*}

    The rest of the proof is standard for Cacciopolli inequalities of inhomogeneous parabolic PDEs and we omit it here.

\end{proof}

\section{Proof of \refthm{thm:MainTheorem}}

To show that \(\omega\in A_\infty(d\sigma)\) we would like to show \eqref{SquarefctBound}. It is well known that it is enough to show the Carelson measure estimate for only all small boundary balls, i.e. boundary balls \(\Delta\) with \(l(\Delta)\leq R\) for some fixed \(R>0\).
First, we fix a boundary cube \(\Delta\subset\partial\Omega\) and a small \(\eta>0\) that we determine later and assume without loss of generality that \(\Vert f\Vert_{L^\infty(\partial\Omega)}\leq 1\)
Note that the maximum principle implies \(\Vert u\Vert_\infty\leq 1\). To start with we introduce a smooth cut-off of \(T(\Delta)\). Let \(\psi\in C^{\infty}(\mathbb{R}^{n+1}_+)\) with 
\[\psi\equiv 1 \textrm{ on } T(\Delta),\]
and
\[\psi\equiv 0 \textrm{ on } \Omega\setminus (T(3\Delta)).\]
We also have
\[|\nabla\psi(x,t)|\lesssim \frac{1}{l(\Delta)} \qquad \textrm{for all }(x,t)\in \Omega.\]

We can start to estimate the left side of \eqref{SquarefctBound}. Since \(u\Psi^2t\in W^{1,2}_0(\Omega)\) we have
\[\int_{\Omega}A\nabla u\cdot\nabla (u\psi^2t) dxdt=0,\]
and hence
\begin{align*}
    \int_{T(\Delta)}|\nabla u|^2t dxdt&\leq \int_{\Omega}|\nabla u|^2\psi^2t dxdt
    \\
    &\lesssim \int_{\Omega}A\nabla u\cdot\nabla u\psi^2t dxdt
    \\
    &=\int_{\Omega}A\nabla u\cdot\nabla (u\psi^2t) dxdt + \int_{\Omega}A\nabla u\cdot \nabla \psi \psi u t dxdt 
    \\
    &\qquad + \int_{\Omega}A\nabla u\cdot \vec{e}_{2}u\psi^2 dxdt
    \\
    &=\int_{\Omega}A\nabla u\cdot \nabla \psi \psi u t dxdt + \int_{\Omega}A\nabla u\cdot \vec{e}_{2}u\Psi^2 dxdt
    \\
    &\eqcolon J_1+J_2
\end{align*}

For the first term \(J_1\) we have with boundedness of \(A\) that
\begin{align*} 
J_1&\leq \Big(\int_\Omega |\nabla u|^2\psi^2t dxdt\Big)^{1/2}\Big(\int_\Omega |\nabla \psi|^2u^2t dxdt\Big)^{1/2}
\\
&\leq \sigma \int_\Omega |\nabla u|^2\psi^2t dxdt + C_\sigma |\Delta|,
\end{align*}
and for sufficiently small \(\sigma>0\) we can hide the first term on the left side.

For \(J_2\) to continue to break up the integral to
\[\int_{\Omega}A\nabla u\cdot \vec{e}_{2}u\psi^2 dxdt=\int_{\Omega}c\cdot\partial_x u u\psi^2 dxdt+\int_{\Omega}d \partial_t u u\psi^2 dxdt=:J_{21}+J_{22}.\]

We have for \(J_{22}\)
\begin{align*}
    J_{22}&=\int_{\Omega}\partial_t(d u^2\Psi^2) dxdt + \int_{\Omega}d u^2\partial_t\Psi^2 dxdt + \int_{\Omega}\partial_t d u^2\Psi^2
    \\
    &\lesssim \int_{3\Delta}|d u^2\Psi^2| dxdt +|\Delta| + \int_{\Omega}|\partial_t d|
    \\
    &\lesssim |\Delta|.
\end{align*}
Thus it remains to bound \(J_{21}\). 

We proceed with
\begin{align*}
    J_{21}=\int_{\Omega}c\cdot\partial_x (u^2\psi^2) dxdt + \int_{\Omega}c\cdot\partial_x \psi u^2\psi dxdt:=J_{211}+|\Delta|.
\end{align*}

For the term \(J_{211}\) we apply the Hodge decomposition of \(c\) and get
\begin{align*}
    J_{211}&=\int_{\Omega}\AP\partial_x\phi^s \cdot\partial_x(u^2\psi^2)dxdt
    \\
    &=\int_{\Omega}\AP\partial_x \theta_\eta \cdot\partial_x(u^2\psi^2)dxdt - \int_{\Omega}\AP\partial_x\rho \cdot\partial_x(u^2\psi^2)dxdt
    \\
    &=\int_{\Omega}\AP\partial_x \theta_\eta \cdot\partial_x(u^2\psi^2)dxdt  -\int_{\Omega}\AP\partial_x \partial_t\rho \cdot\partial_x(u^2\psi^2)tdxdt  
    \\
    &\qquad   - \int_{\Omega}\AP\partial_x\rho \cdot\partial_x\partial_t(u^2\psi^2)tdxdt - \int_{\Omega}\partial_t\AP\partial_x\rho \cdot\partial_x(u^2\psi^2)tdxdt
    \\
    &=:J_{2111}+I_1+I_2+I_3.
\end{align*}

First we deal with \(I_1,I_2\) and \(I_3\). By \reflemma{lemma:L^2estimatesForSquareFunctions} we have
\[I_1\lesssim \Big(\int_{T(\Delta)}|\partial_x\partial_t\rho|^2tdxdt\Big)^{1/2}\Big(\int_{T(\Delta)}|\partial_x(u^2\psi^2)|^2tdxdt\Big)^{1/2}\lesssim |\Delta|^{1/2} J_1^{1/2}\lesssim |\Delta|.\]

Next, for \(I_2\) we have by \reflemma{lemma:L^2estimatesForSquareFunctions}
\[I_2\lesssim \Big(\int_{T(\Delta)}|\Div(\AP\partial_x\rho)|^2tdxdt\Big)^{1/2}\Big(\int_{T(\Delta)}|\partial_t(u^2\psi^2)|^2tdxdt\Big)^{1/2}\lesssim |\Delta|^{1/2} J_1^{1/2}\lesssim |\Delta|.\]

At last, for \(I_3\) and by \reflemma{lemma:L^2estimatesForSquareFunctions} we get
\[I_3\lesssim \Big(\int_{T(\Delta)}|\partial_t \AP|^2|\partial_x\rho|^2tdxdt\Big)^{1/2}\Big(\int_{T(\Delta)}|\partial_x(u^2\psi^2)|^2tdxdt\Big)^{1/2}\lesssim |\Delta|^{1/2} J_1^{1/2}\lesssim |\Delta|.\]

We continue with \(J_{2111}\) and have
\begin{align*}
    \int_{\Omega}\AP\partial_x \theta_\eta \cdot\partial_x(u^2)\psi^2dxdt + \int_{\Omega}\AP\partial_x \theta_\eta \cdot u^2\partial_x(\psi^2)dxdt=:II_1+II_2.
\end{align*}

For \(II_2\) we observe that
\begin{align*} 
    II_2&\lesssim \Big(\int_{T(3\Delta)}\frac{|\partial_x\theta_\eta|^2}{l(\Delta)^2}dxdt\Big)^{1/2}|\Delta|^{1/2}\lesssim |\Delta|.
\end{align*}

For \(II_1\) we get
\begin{align*}
    II_1&=\int_{\Omega}\AP\partial_x u \cdot\partial_x(\theta_\eta u \psi^2)dxdt + \int_{\Omega}\AP\partial_x u \cdot\partial_x u \theta_\eta \psi^2dxdt + \int_{\Omega}\AP\partial_x u^2 \cdot\partial_x \psi^2 \theta_\eta dxdt
    \\
    &=:II_{11}+II_{12}+II_{13}.
\end{align*}

Due to \reflemma{lemma:thetaPointwiseBoundOnGoodSetF} we have that \(II_{13}\) can be handled like \(J_1\) and 
\[II_{12}=\eta \kappa_0 \int_{T(\Delta)}|\partial_x u|^2\psi^2 tdxdt,\]
and choosing \(\eta\) sufficiently small let us hide this term on the left side. To deal with \(II_{11}\) we use that \(u\) is a weak solution and \(\theta_\eta u \psi^2\in W_0^{1,2}(\Omega)\) to conclude
\begin{align*}
    II_{11}&=\int_{\Omega}\AP\partial_x u \cdot\partial_x(\theta_\eta u \psi^2)dxdt
    \\
    &= \int_{\Omega}b\partial_t u \cdot\partial_x(\theta_\eta u \psi^2)dxdt
    + \int_{\Omega}c\cdot \partial_x u \partial_t(\theta_\eta u \psi^2)dxdt
    + \int_{\Omega}d\partial_t u \partial_t(\theta_\eta u \psi^2)dxdt
    \\
    &=:II_{111}+II_{112}+II_{113}.
\end{align*}

We have with \reflemma{lemma:thetaPointwiseBoundOnGoodSetF}
\begin{align*}
    II_{113}&=\int_{\Omega}d\partial_t u \partial_t\theta_\eta u \psi^2 dxdt + \int_{\Omega}d\partial_t u \theta_\eta \partial_t u \psi^2 dxdt + \int_{\Omega}d\partial_t u \theta_\eta u \partial_t \psi^2 dxdt
    \\
    &\lesssim \Big(\int_{T(3\Delta)}\frac{|\partial_t\theta_\eta|^2}{t} dxdt\Big)^{1/2}\Big(\int_{T(\Delta)}|\partial_t u|^2\psi^2 tdxdt\Big)^{1/2} 
    + \int_{T(\Delta)}|\partial_t u|^2\psi^2 \theta_\eta dxdt
    \\
    &\qquad + \int_{\Omega}|\partial_t u \theta_\eta u \partial_t \psi^2| dxdt
    \\
    &\lesssim C_\sigma \int_{T(3\Delta)}\frac{|\partial_t\theta_\eta|^2}{t} dxdt + \sigma \int_{T(\Delta)}|\partial_t u|^2\psi^2 tdxdt
    \\
    &\qquad +\eta \int_{T(\Delta)}|\partial_t u|^2\psi^2 t dxdt +  \int_{\Omega}|\partial_t u^2 \partial_t \psi^2|t dxdt.
\end{align*}

Hence, for a sufficiently small choice of \(\eta\) and \(\sigma\) we can hide the second and third term on the left side, while the forth term is bounded like \(J_1\) and the first one with \reflemma{lemma:L^2estimatesForSquareFunctions}.

Analogoulsy, we get for \(II_{112}\)
\begin{align*}
    II_{113}&=\int_{\Omega}c\cdot\partial_x u \partial_t\theta_\eta u \psi^2 dxdt + \int_{\Omega}c\cdot\partial_x u \theta_\eta \partial_t u \psi^2 dxdt + \int_{\Omega}c\cdot\partial_x u \theta_\eta u \partial_t \psi^2 dxdt
    \\
    &\lesssim C_\sigma \int_{T(3\Delta)}\frac{|\partial_t\theta_\eta|^2}{t} dxdt + \sigma \int_{T(\Delta)}|\partial_x u|^2\psi^2 tdxdt
    \\
    &\qquad +\eta \int_{T(\Delta)}|\partial_x u|^2\psi^2 t dxdt +  \int_{\Omega}|\partial_x u^2 \partial_t \psi^2|t dxdt,
\end{align*}

and hence \(II_{113},II_{112}\lesssim |\Delta|\). For \(II_{111}\) however, we write
\begin{align*}
    II_{111}=\int_{\Omega}b\cdot \partial_x\theta_\eta \partial_t u^2 \psi^2 dxdt + \int_{\Omega}b\cdot \partial_x u \partial_t u^2 \psi^2 \theta_\eta dxdt + \int_{\Omega}b\cdot \partial_x \psi u\partial_t u\psi \theta_\eta dxdt
    \\
    =:II_{1111}+II_{1112}+II_{1113}.
\end{align*}
Since 
\[II_{1112}\lesssim \eta \int_{T(\Delta)}|\partial_x u|^2\psi^2 t dxdt,\]
and
\[II_{1113}\lesssim \Big(\int_{T(3\Delta)}\frac{|u|^2}{l(\Delta)^2}t dxdt\Big)^{1/2}\Big(\int_{T(3\Delta)}|\partial_t u|^2\psi^2 tdxdt\Big)^{1/2}\]
we get boundedness of these two terms like before.

Next, we recall that \(\partial_t\theta_\eta=\partial_t\phi^t-(w_t+w_s^{(1)}+w_s^{(2)})\) and get by integration by parts
\begin{align*}
     II_{1111}&=\int_{\Omega}b\cdot \partial_x\partial_t\theta_\eta u^2 \psi^2 dxdt + \int_{\Omega}b\cdot \partial_x \theta_\eta u^2\psi \partial_t\psi  dxdt + \int_{\Omega}\partial_t b\cdot \partial_x \theta_\eta u^2\psi^2  dxdt
    \\
    &\lesssim \int_{\Omega}b\cdot \partial_x(w_t u^2 \psi^2) dxdt + \int_{\Omega}b\cdot \partial_x(\partial_t\phi^t-w_s^{(1)}+w_s^{(2)}) u^2 \psi^2 dxdt
    \\
    &\qquad +\int_{\Omega}b\cdot \partial_x (u^2 \psi^2) w_t dxdt
    + \int_{\Omega}b\cdot \partial_x \theta_\eta u^2\psi \partial_t\psi  dxdt + \int_{\Omega}\partial_t b\cdot \partial_x \theta_\eta u^2\psi^2  dxdt
    \\
    &=:III_{1}+III_{2}+III_{3}+III_{4}+III_5.
\end{align*}
We can bound \(III_{4}\) like \(II_2\), while with Cauchy-Schwarz, \reflemma{lemma:SqFctBoundsForw_t} and with hiding of terms on the left side we have
\begin{align*}
    III_{3}&\lesssim \int_{T(3\Delta)}|w_t\partial_x u u\psi^2|+|w_t u^2\partial_x\psi \psi|dxdt
    \\
    &\lesssim \Big(\int_{T(3\Delta)}\frac{|w_t|^2}{t} dxdt\Big)^{1/2}\Big(\int_{T(3\Delta)}|\partial_x u|^2\psi^2 t dxdt\Big)^{1/2}
    \\
    &\qquad + \Big(\int_{T(3\Delta)}\frac{|w_t|^2}{t} dxdt\Big)^{1/2}\Big(\int_{T(3\Delta)}|u\partial_x\psi|^2 t dxdt\Big)^{1/2}.
\end{align*} 
We obtain boundedness by \(|\Delta|\) for \(III_3\). 

For \(III_2\) we observe that due to \eqref{lemma:w_s^2SqFctBound3} in \reflemma{lemma:SqFctBoundsForw_s^2}, and \eqref{lemma:w_s^1SqFctBound3} in \reflemma{lemma:SqFctBoundsForpartial_sPhiAndw_s^1} we have
\begin{align*}
    III_2&\lesssim\int_{T(3\Delta)}|\partial_x\partial_t\phi^t| +|\partial_x w_s^{(1)}|+|\partial_x w_s^{(2)}| dxdt\lesssim |\Delta|
\end{align*}

For \(III_5\) we get with \reflemma{lemma:UniformBoundOnM[nablaphi^s]} and \reflemma{lemma:thetaPointwiseBoundOnGoodSetF}
\[\int_{\Omega}|\partial_t b\cdot \partial_x \theta_\eta u^2\psi^2|  dxdt\lesssim \int_{T(3\Delta)}\sup_{B(x,t,t/2)}|\partial_s b(y,s)|dxdt\lesssim |\Delta|.\]

For \(III_1\) however, we need the Hodge decomposition of \(b\). Since \(\partial\theta_\eta \psi^2 u^2\in W_0^{1,2}(T(3\Delta))\) we have

\begin{align*}
    III_1= \int_{\Omega}\AP\partial_x\tilde{\theta}\cdot \partial_x(w_t u^2 \psi^2) dxdt + \int_{\Omega}\AP\partial_x\tilde{\rho}\cdot \partial_x(w_t u^2 \psi^2) dxdt
    =:III_{11}+III_{12}.
\end{align*}

With integration by parts the second term becomes
\[III_{12}\lesssim \Big(\int_{T(3\Delta)}\frac{|w_t|^2}{t} dxdt\Big)^{1/2}\Big(\int_{T(3\Delta)}|L^t\tilde{\rho}|^2 t dxdt\Big)^{1/2}\lesssim |\Delta|,\]
where the last inequality follows from \reflemma{lemma:L^2estimatesForSquareFunctions} and \reflemma{lemma:SqFctBoundsForw_t}.

For \(III_{11}\) we obtain
\begin{align*}
    III_{11}&=\int_{\Omega}\AP\partial_x\tilde{\theta}\cdot \partial_x u^2 w_t \psi^2 dxdt + \int_{\Omega}\AP\partial_x\tilde{\theta}\cdot \partial_x \psi^2 w_t u^2  dxdt + \int_{\Omega}\AP\partial_x\tilde{\theta}\cdot \partial_x w_t u^2 \psi^2 dxdt
    \\
    &=:III_{111}+III_{112}+III_{113}.
\end{align*}

First, we see that 
\[III_{112}=\Big(\int_{T(3\Delta)}\frac{|w_t|^2}{t} dxdt\Big)^{1/2}\Big(\int_{T(3\Delta)}|\partial_x\tilde{\theta}|^2|\partial_x\psi|^2 t dxdt\Big)^{1/2}\] 
can be dealt with by \(II_2\) and \reflemma{lemma:SqFctBoundsForw_t}.

Next, for \(III_{113}\) we obtain by integration by parts
\begin{align*}
    III_{113}&=\int_{\Omega} L^t w_t u^2\psi^2 \tilde{\theta} dxdt + \int_{\Omega} \AP \partial_x w_t \partial_x(u^2\psi^2) \tilde{\theta} dxdt,
\end{align*}
where the first term can be bounded by
\[\Big(\int_{T(3\Delta)}\frac{|\tilde{\theta}|^2}{t^3} dxdt\Big)^{1/2}\Big(\int_{T(3\Delta)}|L^tw_t|^2 t^3 dxdt\Big)^{1/2}\lesssim |\Delta|\]
due to \reflemma{lemma:L^2estimatesForSquareFunctions} and \reflemma{lemma:SqFctBoundsForw_t}, while the second term is bounded by
\begin{align*}
    &\int_{T(3\Delta)}|\partial_x w_t\partial_x u u\psi^2|t+|\partial_x w_t u^2\partial_x\psi \psi|tdxdt
    \\
    &\lesssim \Big(\int_{T(3\Delta)}|\partial_x w_t|^2t dxdt\Big)^{1/2}\Big(\int_{T(3\Delta)}|\partial_x u|^2\psi^2 t dxdt\Big)^{1/2}
    \\
    &\qquad + \Big(\int_{T(3\Delta)}|\partial_x w_t|^2 t dxdt\Big)^{1/2}\Big(\int_{T(3\Delta)}|u\partial_x\psi|^2 t dxdt\Big)^{1/2},
\end{align*}
and hence by hiding terms on the left side and \reflemma{lemma:SqFctBoundsForw_t}.

At last, it remains to bound \(III_{111}\). For that we have with \reflemma{lemma:thetaPointwiseBoundOnGoodSetF}
\[III_{111}\lesssim \sigma\int_{T(3\Delta)}|\partial_x u|^2\psi^2 t dxdt + C_\sigma\int_{T(3\Delta)}\frac{|w_t|^2}{t} dxdt.\]
The first term can be hiden on the left side with sufficiently small choice of \(\sigma\). For the second one we write 
\begin{align*}
    \int_{T(3\Delta)}\frac{|\partial_x\tilde{\theta}|^2|w_t|^2}{t} dxdt&
    =\sum_{k\leq k_0}\sum_{Q^\prime\in \mathcal{D}^\eta_k(5\Delta)}\int_{Q^\prime}\int_{2^{-k}}^{2^{-k+1}}\frac{|\partial_x\tilde{\theta}|^2|w_t|^2}{t}dxdt
    \\
    &=\sum_{k\leq k_0}\sum_{Q^\prime\in \mathcal{D}^\eta_k(5\Delta)}\Big(\fint_{Q^\prime}\int_{2^{-k}}^{2^{-k+1}}\frac{|\partial_x\tilde{\theta}|^2}{t}dxdt\Big)\Big(|Q|\sup_{Q\times (2^{-k},2^{-k+1}]}|w_t|\Big)
\end{align*}
Using \reflemma{lemma:UniformBoundOnM[nablaphi^s]} and \reflemma{lemma:UniformBoundOnM[nablaphi^s]} we see that
\[\fint_{Q^\prime}\fint_{2^{-k}}^{2^{-k+1}}|\partial_x\tilde{\theta}|^2dxdt=\fint_{Q^\prime}\fint_{2^{-k}}^{2^{-k+1}}|\partial_x\tilde{\rho}|^2dxdt+\fint_{Q^\prime}\fint_{2^{-k}}^{2^{-k+1}}|\partial_x\tilde{\phi}^t|^2dxdt\lesssim \kappa_0,\]

and hence with \reflemma{lemma:localHarnackTypeInequalityForw_t}
\begin{align*}
    \int_{T(3\Delta)}\frac{|\partial_x\tilde{\theta}|^2|w_t|^2}{t} dxdt
    &\lesssim\sum_{k\leq k_0}\sum_{Q^\prime\in \mathcal{D}^\eta_k(5\Delta)}\int_{Q^\prime}\int_{2^{-k}}^{2^{-k+1}}\frac{|w_t|^2}{t}dxdt\lesssim |\Delta|.
\end{align*}
The last inequality follows again from \reflemma{lemma:L^2estimatesForSquareFunctions}. In total we verified the Carleson measure estimate \eqref{SquarefctBound}.

\section{Square function bounds on the parts of the derivative of \(\theta\)}

We investigate square function type expression involving of \(\partial_s\phi^s,w_t,w_s^{(1)}\) and \(w_s^{(2)}\). For all of the results in this chapter we assume the \(L^1\) Carleson condition \eqref{condition:L^1Carlesontypecond}.

\subsection{The partial derivative parts \(\partial_s\phi^s\) and \(w_s^{(1)}\)}

Instead of investigating \(\partial_s\phi^s\) and \(w_s^{(1)}\) separately, we are also going to use that their difference
\[\partial_s\phi^s-w_s^{(1)}=\partial_s\phi^s-e^{-(\eta s)^2L^s}\partial_s\phi^s\]
gives us better properties.

\begin{lemma}\label{lemma:SqFctBoundsForpartial_sPhiAndw_s^1}
The following square function bounds hold
\begin{enumerate}[(i)]
    \item \[\int_{T(3\Delta)}\frac{|\partial_s\phi^s-w_s^{(1)}|^2}{s}dxds\lesssim |\Delta|,\]\label{lemma:w_s^1SqFctBound1}
    \item \[\int_{T(3\Delta)}\frac{|s\partial_x\partial_s\phi^s|^2}{s}dxds+\int_{T(3\Delta)}\frac{|s\partial_x w_s^{(1)}|^2}{s}dxds\lesssim |\Delta|.\label{lemma:w_s^1SqFctBound2}\]
    \item \[\int_{T(3\Delta)}|\partial_x\partial_s\phi^s|dxds + \int_{T(3\Delta)}|\partial_x w_s^{(1)}|dxds\lesssim |\Delta|.\label{lemma:w_s^1SqFctBound3}\]
\end{enumerate}
\end{lemma}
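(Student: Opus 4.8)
The plan is to derive all three bounds from: the pointwise estimate $|\D_s\D_x\phi^s(x)|\lesssim|\D_s A(x,s)|+\fint_{3\Delta}|\D_s A(y,s)|\,dy$ of \reflemma{lemma:UniformBoundOnM[nablaphi^s]}; the fact that $\D_s\phi^s\in W_0^{1,2}(3\Delta)$ (so its zero extension lies in $W^{1,2}(\R)$, with gradient $\D_x\D_s\phi^s$); the $L^1$ and $L^2$ Carleson bounds \eqref{condition:L^1Carlesontypecond}, \eqref{condition:L^2Carlesontypecond}; and the semigroup estimates of Section~3. I would first dispose of the $\D_s\phi^s$ contributions to \eqref{lemma:w_s^1SqFctBound3} and \eqref{lemma:w_s^1SqFctBound2}. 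Inserting the pointwise bound, $\int_{T(3\Delta)}|\D_x\D_s\phi^s|\,dxds$ is dominated by $\int_{T(3\Delta)}|\D_s A(x,s)|\,dxds$ plus a term in which the $y$-average is interchanged with the $x$-integral; the first is $\lesssim|\Delta|$ directly from \eqref{condition:L^1Carlesontypecond} (since $|\D_s A(x,s)|\le\sup_{B(x,s,s/2)}|\D_s A|$), and the second reduces to the first over a slightly enlarged ball. The same insertion turns $\int_{T(3\Delta)}s|\D_x\D_s\phi^s|^2\,dxds$ into $\int_{T(3\Delta)}s|\D_s A|^2\,dxds$ plus an averaged term, and both are $\lesssim|\Delta|$ by \eqref{condition:L^2Carlesontypecond} (using Jensen on the averaged term).

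\textbf{Item \eqref{lemma:w_s^1SqFctBound2} for $w_s^{(1)}$.} Since $L^s$ is $t$-independent and $\D_s\phi^s\in W_0^{1,2}(\R)$, \reflemma{lemma:nablaSemigroupBoundedByNablaf} gives
\[
\|\D_x w_s^{(1)}(\cdot,s)\|_{L^2(\R)}=\|\D_x e^{-\eta^2 s^2 L^s}\D_s\phi^s\|_{L^2(\R)}\lesssim\|\D_x\D_s\phi^s\|_{L^2(3\Delta)} ,
\]
hence $\int_{T(3\Delta)}s|\D_x w_s^{(1)}|^2\,dxds\lesssim\int_{T(3\Delta)}s|\D_x\D_s\phi^s|^2\,dxds\lesssim|\Delta|$ by the previous step.

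\textbf{Item \eqref{lemma:w_s^1SqFctBound1}.} The point is that $\D_s\phi^s-w_s^{(1)}=(I-e^{-\eta^2 s^2 L^s})\D_s\phi^s$ is \emph{small}. Writing $(I-e^{-\tau_0 L^s})g=\int_0^{\tau_0}L^s e^{-\tau L^s}g\,d\tau=\int_0^{\tau_0}(L^s)^{1/2}e^{-\tau L^s}(L^s)^{1/2}g\,d\tau$, and combining the heat bound $\|(L^s)^{1/2}e^{-\tau L^s}\|_{L^2\to L^2}\lesssim\tau^{-1/2}$ (which follows from \refprop{prop:L2NormBoundsOfHeatSemigroup} and the Kato equivalence $\|(L^s)^{1/2}\cdot\|_{L^2}\approx\|\D_x\cdot\|_{L^2}$ of \refprop{cor:NablaL^-1/2Bounded}) with the same equivalence applied to $g=\D_s\phi^s$, one gets $\|\D_s\phi^s-w_s^{(1)}\|_{L^2(3\Delta)}\lesssim s\,\|\D_x\D_s\phi^s\|_{L^2(3\Delta)}$. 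Dividing by $s$, squaring and integrating reduces \eqref{lemma:w_s^1SqFctBound1} to $\int_{T(3\Delta)}s|\D_x\D_s\phi^s|^2\,dxds\lesssim|\Delta|$, i.e. to \eqref{lemma:w_s^1SqFctBound2}.

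\textbf{The main obstacle: item \eqref{lemma:w_s^1SqFctBound3} for $w_s^{(1)}$.} The remaining bound $\int_{T(3\Delta)}|\D_x w_s^{(1)}|\,dxds\lesssim|\Delta|$ is the delicate one. Using the $L^2$ bound above together with Cauchy--Schwarz in $x$ over $3\Delta$, it suffices to prove the mixed-norm estimate $\int_0^{l(3\Delta)}\big(\int_{3\Delta}|\D_x\D_s\phi^s(x)|^2\,dx\big)^{1/2}\,ds\lesssim l(3\Delta)^{1/2}$, and by the pointwise bound (the $y$-average term being elementary) to
\[
\int_0^{l(3\Delta)}\|\D_s A(\cdot,s)\|_{L^2(3\Delta)}\,ds\lesssim l(3\Delta)^{1/2}.
\]
Here the naive arguments fail: a crude Cauchy--Schwarz in $s$ produces a divergent $\int_0 ds/s$, and the crude use of $|\D_s A|\le C/s$ is not sharp precisely when $\D_s A$ is spread out in $x$. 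Thus the \emph{local} character of \eqref{condition:L^1Carlesontypecond} must be exploited scale by scale: I would run a Whitney decomposition of $T(3\Delta)$ and on each Whitney cube $Q$ combine the Carleson estimate $\int_Q\sup_B|\D_s A|\lesssim l(Q)$ with $\sup_Q|\D_s A|\lesssim l(Q)^{-1}$ to control the square-function mass of $\D_s A$ on $Q$, the essential difficulty being to sum these contributions across all scales without loss — this uses the full family of boundary balls in \eqref{condition:L^1Carlesontypecond}, not just $3\Delta$, and the off-diagonal estimates of \refprop{prop:off-diagonal} enter here to confine $\D_x w_s^{(1)}(\cdot,s)$ to an $O(\eta s)$-neighbourhood of $3\Delta$ so that no geometric factor is lost when passing between $\D_x w_s^{(1)}$ and $\D_x\D_s\phi^s$. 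This book-keeping is what replaces the mixed $L^1$--$L^\infty$ argument of \cite{ulmer_mixed_2024}, and I expect it to be the technical heart of the lemma.
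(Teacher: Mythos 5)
Items \eqref{lemma:w_s^1SqFctBound1}, \eqref{lemma:w_s^1SqFctBound2} and the \(\partial_s\phi^s\)-half of \eqref{lemma:w_s^1SqFctBound3} of your argument are essentially the paper's: everything is reduced, via the pointwise bound of \reflemma{lemma:UniformBoundOnM[nablaphi^s]} and the semigroup estimates, to \(\int_{T(3\Delta)}|\partial_x\partial_s\phi^s|^2 s\,dxds\lesssim|\Delta|\) resp.\ \(\int_{T(3\Delta)}|\partial_sA|\,dxds\lesssim|\Delta|\) (the paper obtains the smallness in \eqref{lemma:w_s^1SqFctBound1} by writing \(\partial_s\phi^s-w_s^{(1)}=\int_0^s\partial_te^{-(\eta t)^2L^s}\partial_s\phi^s\,dt\) and invoking \refprop{Proposition11} and the maximal function instead of your \((L^s)^{1/2}\) functional calculus, but both routes land in the same place).

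The genuine gap is in the \(w_s^{(1)}\)-half of \eqref{lemma:w_s^1SqFctBound3}. Your reduction — Cauchy--Schwarz in \(x\) globally over \(3\Delta\), leaving \(\int_0^{l}\Vert\partial_sA(\cdot,s)\Vert_{L^2(3\Delta)}\,ds\lesssim l^{1/2}\) to prove — is a reduction to a \emph{false} statement, so the Whitney bookkeeping you defer to the end cannot rescue it. Concretely, take \(\partial_sA(x,s)=s^{-1}\chi_E(x)\chi_{(a,l)}(s)\) with \(E\subset3\Delta\) an interval of length \(|E|=l/\log(l/a)\): this satisfies \(|\partial_sA|\le s^{-1}\) and the \(L^1\) Carleson condition \eqref{condition:L^1Carlesontypecond} uniformly in \(a\) (each scale \(s\) contributes \(\approx|E|/s+1\) to \(\sigma(\Delta)^{-1}\int\sup\), which integrates to \(\approx\log(l/a)\cdot|E|/l+1\lesssim1\)), yet \(\int_0^l\Vert\partial_sA(\cdot,s)\Vert_{L^2(3\Delta)}\,ds\approx|E|^{1/2}\log(l/a)=l^{1/2}\log^{1/2}(l/a)\to\infty\) as \(a\to0\). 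The point you are missing is that Cauchy--Schwarz must be applied only \emph{locally}, on boundary cubes \(Q\) of sidelength \(\approx s\) with finite overlap covering \(3\Delta\):
\begin{align*}
\int_{3\Delta}|\partial_x w_s^{(1)}|\,dx\lesssim\sum_{Q}|Q|\Big(\fint_Q|\partial_x e^{-\eta^2s^2L^s}\partial_s\phi^s|^2\,dx\Big)^{1/2},
\end{align*}
and the inner average must be bounded \emph{pointwise in \(L^\infty\)}, not in \(L^2\): the Caccioppoli-type estimate of \reflemma{lemma:thetaPointwiseBoundOnGoodSetF} applied with \(\partial_s\phi^s\) in place of \(\phi^s\) gives \(\fint_Q|\partial_xe^{-\eta^2s^2L^s}\partial_s\phi^s|^2\lesssim M[\partial_x\partial_s\phi^s]^2\), which via \reflemma{lemma:UniformBoundOnM[nablaphi^s]} is controlled by \(\sup_{B(x,s,s/2)}|\partial_s\AP|\) (plus the harmless averaged term). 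Summing \(\sum_Q|Q|\sup_{B(\cdot,s,s/2)}|\partial_s\AP|\) and integrating in \(s\) then reproduces the \(L^1\) Carleson expression \eqref{condition:L^1Carlesontypecond} exactly, with no logarithmic loss; in the example above this local argument yields \(\int(|E|/s+1)\,ds\lesssim l\), whereas the global one loses the factor \(\log^{1/2}(l/a)\). This local-sup step, not off-diagonal confinement of the support, is the mechanism that makes \eqref{lemma:w_s^1SqFctBound3} work.
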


\begin{proof}
    For \eqref{lemma:w_s^1SqFctBound1} we use an analogous way to the proof of \reflemma{lemma:thetaPointwiseBoundOnGoodSetF}, where we use \refprop{Proposition11} to get
    \begin{align*}
        (\partial_s\phi^s-e^{-(\eta s)^2L^s}\partial_s\phi^s)(x)&=\int_0^s\partial_t e^{-(\eta t)^2L^s}\partial_s \phi^sdt\lesssim \eta\int_0^s M[\partial_x\partial_s\phi^s](x)dt
        \\
        &\lesssim M[\partial_x\partial_s\phi^s](x)\eta s.
    \end{align*}
    Hence by \reflemma{lemma:UniformBoundOnM[nablaphi^s]},
    \begin{align*}
       \int_{T(3\Delta)}\frac{|\partial_s\phi^s-w_s^{(1)}|^2}{s}dxds &\lesssim \int_0^{l(3\Delta)} \int_{3 \Delta}|M[\partial_x\partial_s\phi^s](x)|^2sdxds
        \\
        &\lesssim \int_0^{l(3\Delta)} \int_{3\Delta}|\partial_x\partial_s\phi^s(x)|^2sdxds
        \\
        &\lesssim \int_0^{l(3\Delta)} \int_{3\Delta}|\partial_s A|^2sdxds
        \\
        &\lesssim |\Delta|,
    \end{align*}
    
    and observation \eqref{condition:L^2Carlesontypecond} finishes the proof of \eqref{lemma:w_s^1SqFctBound1}.

    \medskip
    For \eqref{lemma:w_s^1SqFctBound2} we just observe that \(\Vert\partial_x w_s^{(1)}\Vert_{L^2(\mathbb{R}^n)}\lesssim \Vert \partial_x\partial_s\phi^s\Vert_{L^2(\mathbb{R}^n)}\), which implies
    \begin{align*}
        \int_{T(3\Delta)}\frac{|s\partial_x\partial_s\phi^s|^2}{s}dxds&,\int_{T(3\Delta)}\frac{|s\partial_x w_s^{(1)}|^2}{s}dxds
        \\
        &\qquad\qquad\lesssim \int_0^{l(3\Delta)} \int_{3\Delta}|\partial_x\partial_s\phi^s(x)|^2sdxds.
    \end{align*}
    Following the argument for \eqref{lemma:w_s^1SqFctBound1} from here finishes the proof for \eqref{lemma:w_s^1SqFctBound2}.

    \medskip
    Lastly, we deal with \eqref{lemma:w_s^1SqFctBound3}. By \reflemma{lemma:UniformBoundOnM[nablaphi^s]} and \reflemma{lemma:thetaPointwiseBoundOnGoodSetF} we have
    \[\int_{T(3\Delta)}|\partial_x\partial_s\phi^s|dxds\lesssim \int_{T(3\Delta)}|\partial_s\AP|dxds\lesssim |\Delta|,\]
    and
    \begin{align*}
        \int_{T(3\Delta)}|\partial_x w_s^{(1)}|dxds&=\int_0^{3l(\Delta)}\sum_{\substack{Q\subset 5\Delta,\\l(Q)\approx s, Q\textrm{ finite overlap}, \bigcup Q\supset 3\Delta}}\Big(\fint_Q |\partial_x e^{-\eta^2s^2L^s\partial_s\phi^s}|^2 dx\Big)^{1/2}|Q|
        \\
        &\lesssim \int_0^{3l(\Delta)}\sum_{\substack{Q\subset 5\Delta,\\l(Q)\approx s, Q\textrm{ finite overlap}, \bigcup Q\supset 3\Delta}}\sup_{B(x,s,s/2)}|\partial_s\AP||Q|
        \\
        &\lesssim \int_{T(5\Delta)}\sup_{B(x,s,s/2)}|\partial_s\AP|\lesssim |\Delta|.
    \end{align*}
\end{proof}

\subsection{The partial derivative part \(w_s^{(2)}\)}

\begin{lemma}\label{lemma:w_s^2/nablaw_s^2spatialL^2Bound}
    It holds that
\begin{align}
\int_{T(3\Delta)}\frac{|w_s^{(2)}|^2}{s}dxds\lesssim |\Delta|.\label{lemma:w_s^2SqFctBound1}
\end{align}
\end{lemma}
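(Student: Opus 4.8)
The plan is to exploit the explicit Duhamel representation
\[w_s^{(2)}(x,s)=\int_0^{s} 2\eta^2\tau\, e^{-\eta^2(s^2-\tau^2)L^s}\,\partial_x\bigl(\partial_s A(x,s)\,\partial_x w(x,\eta^2\tau^2,s)\bigr)\,d\tau,\]
and to estimate the inner $L^2(\mathbb{R}^n)$ norm in $x$ for each fixed $s$ before integrating in $s$. First I would slice the $\tau$-integral dyadically relative to $s$, writing $w_s^{(2)}=\sum_{j\ge 0}\int_{I_j}(\cdots)\,d\tau$ with $I_j$ the interval where $\tau\approx 2^{-j}s$. On each piece the operator $e^{-\eta^2(s^2-\tau^2)L^s}$ is composed with the divergence $\partial_x$, so I would use the $L^2$--$L^2$ boundedness of $\sqrt{t}\,\nabla e^{-tL^s}$ from \refprop{prop:off-diagonal}, i.e. $\Vert e^{-tL^s}\partial_x g\Vert_{L^2}=\Vert L^{s,1/2}e^{-tL^s}L^{s,-1/2}\partial_x g\Vert_{L^2}\lesssim t^{-1/2}\Vert g\Vert_{L^2}$ (using \refprop{cor:NablaL^-1/2Bounded} for the $L^{s,-1/2}\partial_x$ factor). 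Here $t=\eta^2(s^2-\tau^2)\approx \eta^2 s^2$ once $\tau\le s/2$, which gives a gain of $s^{-1}$ but forces separate treatment of the endpoint $\tau\to s$ where $s^2-\tau^2\to 0$; there I would instead integrate that divergence by parts onto the test object, or simply observe that $\int_0^s 2\eta^2\tau\,d\tau=\eta^2 s^2$ so the singular weight $t^{-1/2}=(\eta^2(s^2-\tau^2))^{-1/2}$ is integrable against $\tau\,d\tau$ near $\tau=s$ after the substitution $u=s^2-\tau^2$.

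Carrying this out, for each fixed $s$ I expect a bound of the form
\[\Vert w_s^{(2)}(\cdot,s)\Vert_{L^2(\mathbb{R}^n)}\lesssim \int_0^s \frac{\eta^2\tau}{\sqrt{\eta^2(s^2-\tau^2)}}\,\bigl\Vert \partial_s A(\cdot,s)\,\partial_x w(\cdot,\eta^2\tau^2,s)\bigr\Vert_{L^2(\mathbb{R}^n)}\,d\tau.\]
Now I would use the pointwise control $|\partial_s A|\lesssim C/s$ on $T(3\Delta)$ (hypothesis $|\partial_t A|\,t\le C$), which converts the $\partial_s A$ factor into $s^{-1}$ and simultaneously the Carleson term will re-enter through the other factor. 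Actually the cleaner route is to keep $\partial_s A$ as a factor and bound $\Vert\partial_x w(\cdot,\eta^2\tau^2,s)\Vert_{L^\infty}$ or $L^2$ by $M[\partial_x\phi^s]\lesssim\kappa_0$ via \reflemma{lemma:thetaPointwiseBoundOnGoodSetF} and the analytic semigroup bound $\Vert\nabla e^{-tL^s}\phi^s\Vert_{L^2}\lesssim\Vert\nabla\phi^s\Vert_{L^2}$ from \reflemma{lemma:nablaSemigroupBoundedByNablaf}, uniformly in $\tau\le s$. After the $\tau$-integration (the weight $\eta^2\tau(s^2-\tau^2)^{-1/2}$ integrates to $\lesssim \eta\, s$) this yields
\[\Vert w_s^{(2)}(\cdot,s)\Vert_{L^2(3\Delta)}\lesssim \eta s\,\Bigl\Vert \sup_{B(x,s,s/2)}|\partial_s A(\cdot,s)|\Bigr\Vert_{L^2(3\Delta)}\quad\text{(morally)},\]
so that
\[\int_{T(3\Delta)}\frac{|w_s^{(2)}|^2}{s}\,dxds\lesssim \eta^2\int_0^{3l(\Delta)}\!\!\int_{3\Delta} s\,\sup_{B(x,s,s/2)}|\partial_s A(y,s)|^2\,dx\,ds\lesssim \eta^2\,|\Delta|,\]
where the last step is exactly the $L^2$ Carleson condition \eqref{condition:L^2Carlesontypecond}.

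The main obstacle I anticipate is handling the near-diagonal part $\tau\approx s$ of the Duhamel integral, where the heat kernel $e^{-\eta^2(s^2-\tau^2)L^s}$ carries no smoothing at all while it must still absorb the $\partial_x$ derivative falling on $\partial_s A\cdot\partial_x w$; one cannot blindly apply the $t^{-1/2}$ gradient bound there. The fix is to split at $\tau=s/2$: on $\{\tau<s/2\}$ the argument above applies verbatim; on $\{s/2<\tau<s\}$ one substitutes $u=\eta^2(s^2-\tau^2)$, notes $2\eta^2\tau\,d\tau=-du$, and the contribution becomes $\int_0^{\eta^2 (3s^2/4)} e^{-uL^s}\partial_x(\cdots)\,\big|_{\text{evaluated at the corresponding }\tau}\,du$, a Duhamel-type integral over $u$ with integrable singularity $u^{-1/2}$ after using the gradient off-diagonal bound and uniform boundedness of $\Vert\partial_x w(\cdot,\eta^2\tau^2,s)\Vert_{L^2}\lesssim\kappa_0$; this again produces a factor $\lesssim s$ and closes the estimate. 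A secondary technical point is the localization to $3\Delta$: since $\phi^s$ is supported in $3\Delta$ and the heat semigroup has the off-diagonal decay of \refprop{prop:off-diagonal}, the tails outside a fixed dilate are harmless and can be summed geometrically, contributing only a constant.
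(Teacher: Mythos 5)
Your proposal follows essentially the same route as the paper's proof: Minkowski's inequality to pull the Duhamel $\tau$-integral outside the $L^2_x$ norm, the Kato square-root identities together with the $t^{-1/2}$ gradient bound for the semigroup to absorb the divergence and produce the integrable weight $\tau/\sqrt{s^2-\tau^2}$, a Whitney-type bound of $\partial_x e^{-\eta^2\tau^2L^s}\phi^s$ by $M[\partial_x\phi^s]\lesssim\kappa_0$ near $3\Delta$ plus off-diagonal decay and Poincar\'e for the tails, and finally the derived $L^2$ Carleson condition \eqref{condition:L^2Carlesontypecond}. The splitting at $\tau=s/2$ you propose is unnecessary (the paper integrates $\int_0^s\tau(s^2-\tau^2)^{-1/2}d\tau=s$ directly, as you also observe), but this is a cosmetic difference.
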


\begin{proof}
    We use Minkowski inequality and \refprop{prop:KatoConjecture} to obtain

    \begin{align*}
        &\int_{T(3\Delta)}\frac{|w_s^{(2)}|^2}{s}dxds= \int_0^{3l(\Delta)}\int_{3\Delta}\frac{1}{s}\Big|\int_0^{s} \eta^2\tau e^{-\eta^2(s^2-\tau^2)L^s}\partial_x(\partial_s \AP(x,s)\partial_x w(x,\eta^2\tau^2,s))d\tau\Big|^2 dx ds
        \\
        &\qquad\lesssim \eta^2\int_0^{3l(\Delta)}\frac{1}{s}\Big(\int_0^{s}\tau \Vert (L^s)^{\frac{1}{2}}e^{-\eta^2(s^2-\tau^2)L^s}(L^s)^{-\frac{1}{2}}\partial_x(\partial_s \AP(x,s)\partial_x w(x,\eta^2\tau^2,s))\Vert_{L^2(\mathbb{R}^n)} d\tau\Big)^{2} ds
        \\
        &\qquad\lesssim \eta^2\int_0^{3l(\Delta)}\frac{1}{s}\Big(\int_0^{s}\tau \Vert \partial_x e^{-\eta^2(s^2-\tau^2)L^s}(L^s)^{-\frac{1}{2}}\partial_x(\partial_s \AP(x,s)\partial_x w(x,\eta^2\tau^2,s))\Vert_{L^2(\mathbb{R}^n)} d\tau\Big)^{2} ds
        \\
        &\qquad\lesssim \eta^2\int_0^{3l(\Delta)}\frac{1}{s}\Big(\int_0^{s}\frac{\tau}{\sqrt{s^2-\tau^2}}\Vert(L^s)^{-\frac{1}{2}}\partial_x(\partial_s \AP(x,s)\partial_x w(x,\eta^2\tau^2,s))\Vert_{L^2(\mathbb{R}^n)} d\tau\Big)^2 ds
        \\
        &\qquad\lesssim \eta^2\int_0^{3l(\Delta)}\frac{1}{s}\Big(\int_0^{s}\frac{\tau}{\sqrt{s^2-\tau^2}}\Big(\int_{\mathbb{R}^n} |\partial_s A(x,s)\partial_x w(x,\eta^2\tau^2,s))|^2 dx\Big)^{1/2} d\tau\Big)^2 ds
    \end{align*}
    
    Let us now use \refprop{prop:off-diagonal} for \(w(x,\eta^2 t^2,s)=e^{-\eta^2t^2L^s}\phi^s\). We have
    \begin{align*}
        &\Big(\int_{\mathbb{R}^n} |\partial_s A(x,s)\partial_x e^{-\eta^2\tau^2L^s}\phi^s(x)|^2 dx\Big)^{1/2}
        \\
        &=\sum_{k=1}^\infty \Big(\int_{2^k 3\Delta \setminus 2^{k-1}3\Delta}|\partial_s A(x,s)\partial_x e^{-\eta^2\tau^2L^s}\phi^s(x)|^2 dx\Big)^{1/2} + \Big(\int_{3\Delta} |\partial_s A(x,s)\partial_x e^{-\eta^2\tau^2L^s}\phi^s(x)|^2 dx\Big)^{1/2}
        \\
        &=\sum_{k=1}^\infty \frac{1}{s}e^{-c\frac{2^{k} l(\Delta)^2}{\tau^2}}\Vert\phi^s\Vert_{L^2} + \Big(\sum_{\substack{W\in\mathcal{D}(5\Delta) \\ l(W)\approx \tau}} \sup_{x\in W}|\partial_s A(x,s)|^2\int_W|\partial_x e^{-\eta^2\tau^2L^s}\phi^s(x)|^2 dx\Big)^{1/2},
    \end{align*}
    where we made use of the pointwise bound of \(|\partial_s A|\lesssim \frac{1}{s}\). By Poincar\'{e} inequality, we obtain \(\Vert\phi^s\Vert_{L^2}\lesssim l(\Delta)\Vert\partial_x \phi^s\Vert_{L^2}\lesssim l(\Delta)|\Delta|^{1/2}\), whence
    
    \begin{align*}
        \sum_{k=1}^\infty \frac{l(\Delta)}{s}e^{-c\frac{2^{k} l(\Delta)^2}{\tau}}|\Delta|^{1/2}\leq \sum_{k=1}^\infty \frac{l(\Delta)}{\tau}e^{-c\frac{2^{k} l(\Delta)^2}{\tau^2}}|\Delta|^{1/2}\lesssim \frac{\tau}{l(\Delta)}|\Delta|^{1/2}\lesssim |\Delta|^{1/2}.
    \end{align*}

    For the second term we apply \reflemma{lemma:thetaPointwiseBoundOnGoodSetF} and obtain
    \begin{align*}
        &\Big(\sum_{\substack{W\in\mathcal{D}(5\Delta) \\ l(W)\approx \tau}} \sup_{x\in W}|\partial_s A(x,s)|^2\int_W|\partial_x e^{-\eta^2\tau^2L^s}\phi^s(x)|^2 dx\Big)^{1/2}
        \\
        &\qquad\lesssim\Big(\sum_{\substack{W\in\mathcal{D}(5\Delta) \\ l(W)\approx \tau}} \sup_{x\in W}|\partial_s A(x,s)|^2\int_W|M[\partial_x\phi^s]|^2 dx\Big)^{1/2}
        \\
        &\qquad\lesssim\Big(\int_{5\Delta}\sup_{x\in B(x,s,s/2)}|\partial_s A(x,s)|^2 dx\Big)^{1/2}.
    \end{align*}

    Hence in total we get
    \begin{align*}
        &\int_{T(3\Delta)}\frac{|w_s^{(2)}|^2}{s}dxds
        \\
        &\lesssim \eta^2\int_0^{3l(\Delta)}\frac{1}{s}\Big(\int_0^{s}\frac{\tau}{\sqrt{s^2-\tau^2}}\Big(|\Delta|^{1/2} + \Big( \int_{5\Delta}\sup_{x\in B(x,s,s/2)}|\partial_s A(x,s)|^2 dx\Big)^{1/2}\Big) d\tau\Big)^2 ds
        \\
        &\lesssim \eta^2\int_0^{3l(\Delta)}\frac{1}{s}\Big(|\Delta| +  \int_{5\Delta}\sup_{x\in B(x,s,s/2)}|\partial_s A(x,s)|^2 dx\Big)\Big(\int_0^{s}\frac{\tau}{\sqrt{s^2-\tau^2}} d\tau\Big)^2 ds
        \\
        &\lesssim \eta^2\int_0^{3l(\Delta)}s\Big(|\Delta| +  \int_{5\Delta}\sup_{x\in B(x,s,s/2)}|\partial_s A(x,s)|^2 dx\Big) ds
        \\
        &\lesssim \eta^2l(\Delta)^2|\Delta| + \int_{T(5\Delta)}\sup_{x\in B(x,s,s/2)}|\partial_s A(x,s)|^2s dx ds
        \lesssim |\Delta|.
    \end{align*}

\end{proof}

\begin{lemma}\label{lemma:SqFctBoundsForw_s^2}
Let \(Q\subset \partial\Omega\) be a boundary cube of size \(s\), i.e. \(l(Q)\approx s\), \((t_j)_{j\in\Z}=((c2)^{j})_{j\in\Z}\) be a partition of \((0,\infty)\) for a constant \(\frac{1}{2}<c<1\), and the index \(i\) such that \(s\in (t_i,t_{i+1})\). We have the following local bound involving \(w_s^{(2)}\)
\begin{align}
    \int_Q |\partial_x w_s^{(2)}(x,s)|^2dx&\lesssim \frac{1}{s^2}\fint_{t_{l-1}}^{t_{l+2}}\int_{2Q}|v(x,k,s)|^2dkdx + \Vert\partial_s A\Vert_{L^\infty(2Q\times[t_{l-3},t_{l+4}])}^2|Q|\kappa_0^2.\label{lemma:localEstimateNablaw_s^2}
\end{align}
As a consequence we have the square function bound
\begin{align}
   \int_{T(\Delta)}|\partial_x w_s^{(2)}|^2sds\lesssim |\Delta|\label{lemma:w_s^2SqFctBound2}
\end{align}
and
\begin{align}
   \int_{T(\Delta)}|\partial_x w_s^{(2)}|ds\lesssim |\Delta|.\label{lemma:w_s^2SqFctBound3}
\end{align}

\end{lemma}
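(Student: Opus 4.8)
The plan is to establish the local estimate \eqref{lemma:localEstimateNablaw_s^2} first and then deduce the square function bounds \eqref{lemma:w_s^2SqFctBound2} and \eqref{lemma:w_s^2SqFctBound3} by decomposing $T(\Delta)$ into Whitney cubes at scale $s$ and summing, feeding the two terms on the right of \eqref{lemma:localEstimateNablaw_s^2} into the already available \reflemma{lemma:w_s^2/nablaw_s^2spatialL^2Bound} and into the Carleson conditions \eqref{condition:L^2Carlesontypecond}, \eqref{condition:L^1Carlesontypecond} respectively.

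For the local estimate, recall from the computation of $\partial_s\theta_\eta$ that $w_s^{(2)}(\cdot,s)=v(\cdot,\eta s,s)$, where $v(x,t,s)=v_2(x,t^2,s)$ is the auxiliary function of \reflemma{Lemma:CacciopolliTypeInequalities} and solves the inhomogeneous parabolic equation $\partial_t v=2t\,\partial_x(\partial_s A(\cdot,s)\,\partial_x e^{-t^2L^s}\phi^s)-2tL^s v$ with zero initial data; hence $\partial_x w_s^{(2)}(\cdot,s)=\partial_x v(\cdot,\eta s,s)$. I would apply the first Caccioppoli-type inequality of \reflemma{Lemma:CacciopolliTypeInequalities} on Whitney cubes adapted to $2Q$ at the height of the slice (with $\eta$ fixed, absorbed into the implicit constants) and sum: the gradient is traded for the first term $\frac{1}{s^2}\fint_{t_{l-1}}^{t_{l+2}}\int_{2Q}|v(x,k,s)|^2dk\,dx$, while the inhomogeneous term, which is $\int|\partial_s A(\cdot,s)\,\partial_x e^{-(\eta s)^2L^s}\phi^s|^2$ over $2Q$ and a bounded time window, is handled by pulling out $\|\partial_s A(\cdot,s)\|_{L^\infty(2Q)}^2\le\|\partial_s A\|_{L^\infty(2Q\times[t_{l-3},t_{l+4}])}^2$ and estimating the remaining $\fint_{2Q}|\partial_x e^{-(\eta s)^2L^s}\phi^s|^2$ by $M[\partial_x\phi^s]^2$ via the local energy argument behind \reflemma{lemma:thetaPointwiseBoundOnGoodSetF} (which uses only the kernel bounds of \refcor{cor:PointwiseBoundsofw_tAndrhoByM(nablaphi^s)}), followed by $M[\partial_x\phi^s]^2\lesssim\kappa_0^2$ from \reflemma{lemma:UniformBoundOnM[nablaphi^s]}. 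This yields the term $\|\partial_s A\|_{L^\infty}^2|Q|\kappa_0^2$ and hence \eqref{lemma:localEstimateNablaw_s^2}.

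To prove \eqref{lemma:w_s^2SqFctBound2} I would write $\int_{T(\Delta)}|\partial_x w_s^{(2)}|^2s\,dx\,ds=\int_0^{l(\Delta)}\big(\sum_{Q\in\mathcal{D}^\eta_k(5\Delta),\,l(Q)\approx s}\int_Q|\partial_x w_s^{(2)}(x,s)|^2dx\big)s\,ds$ and substitute \eqref{lemma:localEstimateNablaw_s^2}: one power of $s$ cancels one of $1/s^2$, and after reindexing the finitely-overlapping windows $(t_{l-1},t_{l+2})$ the first sum reduces, up to a constant, to $\int_{T(3\Delta)}\frac{|w_s^{(2)}|^2}{s}dx\,ds\lesssim|\Delta|$ by \reflemma{lemma:w_s^2/nablaw_s^2spatialL^2Bound}, while the second sum integrates to $\kappa_0^2\int_{T(5\Delta)}\sup_{B(x,s,s/2)}|\partial_s A(y,s)|^2s\,dx\,ds\lesssim|\Delta|$ by the $L^2$ Carleson condition \eqref{condition:L^2Carlesontypecond}. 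For \eqref{lemma:w_s^2SqFctBound3} I would instead apply Cauchy--Schwarz on each Whitney cube, $\int_Q|\partial_x w_s^{(2)}|dx\le|Q|^{1/2}\big(\int_Q|\partial_x w_s^{(2)}|^2dx\big)^{1/2}$, insert \eqref{lemma:localEstimateNablaw_s^2}, and sum over $Q$ and $s$: the inhomogeneous contribution becomes $\kappa_0\int_{T(5\Delta)}\sup_{B(x,s,s/2)}|\partial_s A(y,s)|\,dx\,ds\lesssim|\Delta|$, which is exactly the place where the full $L^1$ Carleson condition \eqref{condition:L^1Carlesontypecond} is needed rather than only its $L^2$ consequence, while the $|v|$ contribution is controlled by Cauchy--Schwarz in $(Q,s)$ against $\sum_{l(Q)\approx s}|Q|\approx|\Delta|$ together with \reflemma{lemma:w_s^2/nablaw_s^2spatialL^2Bound} once more.

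The main obstacle is keeping the Caccioppoli step genuinely local in $\partial_s A$: only $\|\partial_s A\|_{L^\infty}$ over the small Whitney cube $2Q$ may appear in \eqref{lemma:localEstimateNablaw_s^2}, since a global bound, or the crude pointwise bound $|\partial_s A|\lesssim 1/s$, would be too lossy to be reabsorbed by the Carleson conditions and would amount to reinstating the stronger hypothesis \eqref{L1-LInftyCond}. The rest is bookkeeping: tracking the fixed scale $\eta$ through all heights, controlling the finite overlap of the partition $(t_j)$, passing correctly between the time-averaged output of Caccioppoli and the pointwise-in-$s$ quantity $\int_Q|\partial_x w_s^{(2)}(x,s)|^2dx$, and verifying that the averaged $|v|^2$-terms genuinely reassemble into the square function already bounded in \reflemma{lemma:w_s^2/nablaw_s^2spatialL^2Bound}.
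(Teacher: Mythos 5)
Your treatment of the local estimate \eqref{lemma:localEstimateNablaw_s^2} and of \eqref{lemma:w_s^2SqFctBound2} follows essentially the paper's route (the paper imports the Caccioppoli output from \cite{ulmer_mixed_2024} and then uses \reflemma{lemma:thetaPointwiseBoundOnGoodSetF} to reduce the inhomogeneous terms to \(\Vert\partial_sA\Vert_{L^\infty}^2|Q|\kappa_0^2\)). One caveat already here: the first term on the right of \eqref{lemma:localEstimateNablaw_s^2} involves the off-diagonal values \(v(x,k,s)\) with \(k\neq s\), and these are \emph{not} values of \(w_s^{(2)}\), since \(L^s\) and \(\partial_sA(\cdot,s)\) stay frozen at \(s\) while \(k\) varies; so these terms do not literally ``reassemble into the square function already bounded in \reflemma{lemma:w_s^2/nablaw_s^2spatialL^2Bound}''. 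The paper instead re-derives, via Minkowski, the Kato estimate and \refprop{prop:off-diagonal}, the slice bound \(\int_{3\Delta}|v(x,k,s)|^2dx\lesssim s^2\int_{5\Delta}\sup_{B(x,s,s/2)}|\partial_s\AP|^2dx+s^4/l(\Delta)\), uniformly over \(k\approx s\), and concludes with \eqref{condition:L^2Carlesontypecond}. Since the same computation is available to you, I would count this as an imprecision rather than a gap.

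The genuine gap is in \eqref{lemma:w_s^2SqFctBound3}. After Cauchy--Schwarz on each Whitney cube, the \(|v|\) contribution is \(\int_0^{l(\Delta)}\sum_i\frac{|Q_i|^{1/2}}{s}\big(\fint\int_{2Q_i}|v(x,k,s)|^2dxdk\big)^{1/2}ds\), and you propose to control it by Cauchy--Schwarz against \(\sum_i|Q_i|\approx|\Delta|\) plus the \(L^2\) square-function bound. This loses a logarithm and does not close: any such pairing either produces a factor \(\big(\int_0^{l(\Delta)}\frac{ds}{s}\big)^{1/2}\) or reduces matters to \(\int_0^{l(\Delta)}\big(\int_{5\Delta}\sup|\partial_s\AP|^2dx\big)^{1/2}ds\), which is not controlled by \(\int\sup|\partial_s\AP|^2\,s\,dxds\lesssim|\Delta|\) (take \(\int_{5\Delta}\sup|\partial_sA|^2dx\sim s^{-2}(\log(1/s))^{-3/2}\)). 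What is needed --- and what the paper supplies --- is a bound on \(\sum_i\big(\int_{Q_i}|v(\cdot,k,s)|^2dx\big)^{1/2}\) that is \emph{linear} in the localized quantities \(\sup_{Q_j}|\partial_sA(\cdot,s)|\): the paper dualizes each \(\big(\int_{Q_i}|v|^2\big)^{1/2}\) against \(h_i\in L^2(Q_i)\) and applies the \(L^2\)--\(L^2\) off-diagonal estimates of \refprop{prop:off-diagonal} between Whitney cubes \(Q_i\) and \(Q_j\), gaining the factor \(e^{-c|i-j|^2}\) that makes the double sum over \(i,j\) converge and yields an expression of the form \(\sum_j\sup_{Q_j}|\partial_s\AP|\,\kappa_0\) plus a harmless remainder (after also treating the far field \(\mathbb{R}\setminus4\Delta\) dyadically). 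Only this linearized bound can be fed into the full \(L^1\) Carleson condition \eqref{condition:L^1Carlesontypecond}; without it the proof of \eqref{lemma:w_s^2SqFctBound3} does not go through.
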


\begin{proof}

To start with note that from \cite{ulmer_mixed_2024} we already know that
\begin{align}
    \int_Q |\partial_x w_s^{(2)}(x,s)|^2dx&\lesssim \frac{1}{s^2}\fint_{t_{l-1}}^{t_{l+2}}\int_{2Q}|v(x,k,s)|^2dkdx\nonumber
    \\
    &\qquad+ \Vert\partial_s A(\cdot,s)\Vert_{L^\infty(2Q\times(t_{l-3},t_{l+4}))}^2\fint_{t_{l-3}}^{t_{l+4}}\int_{2Q} |\partial_x e^{-t^2L^s}\phi_s|^2dxdt \nonumber
    \\
    &\qquad+ \Vert\partial_s A(\cdot,s)\Vert_{L^\infty(2Q\times(t_{l-3},t_{l+4}))}^2 s^2 \fint_{t_{l-3}}^{t_{l+4}}\int_{2Q}|\partial_x\partial_ke^{-k^2L^s}\phi_s|^2dxdk.
\end{align}
By \reflemma{lemma:thetaPointwiseBoundOnGoodSetF} we obtain that 
\[\fint_{t_{i-3}}^{t_{i+4}}\int_{2Q} |\partial_x e^{-k^2L^s}\phi_s|^2 + s^2|\partial_x\partial_ke^{-k^2L^s}\phi_s|^2dxdk\lesssim |Q|\inf_{x\in Q} M[\Phi](x)\lesssim \kappa_0|Q|,\]
whence \eqref{lemma:localEstimateNablaw_s^2} follows. 

\medskip
To show \eqref{lemma:w_s^2SqFctBound2} we note that
\begin{align*}
    \int_{3\Delta}|v(x,k,s)|^2dx&\leq\Big(\int_0^k2\tau \Vert e^{-(k^2-\tau^2)L^s}\partial_x(\partial_s\AP\partial_x e^{-\tau^2L^s}\phi^s)\Vert_{L^2} d\tau\Big)^2
    \\
    &\leq\Big(\int_0^k \frac{2\tau}{\sqrt{k^2-\tau^2}} \Vert\partial_s\AP\partial_x e^{-\tau^2L^s}\phi^s\Vert_{L^2} d\tau\Big)^2.
\end{align*}
Observing that \reflemma{lemma:thetaPointwiseBoundOnGoodSetF}, \(L^2-L^2\) off-diagonal estimates in \refprop{prop:off-diagonal}, and Poincar\'{e} yield
\begin{align*}
    \Vert\partial_s\AP\partial_x e^{-\tau^2L^s}\phi^s\Vert_{L^2}&\lesssim\Big(\sum_{\substack{Q\subset 5\Delta\\Q\textrm{ finite overlap}, l(Q)\approx s\approx k}}\sup_Q|\partial_s\AP|^2|Q| \kappa_0^2\Big)^{1/2}
    \\
    &\hspace{30mm}+ \frac{1}{s}\Vert\partial_x e^{-\tau^2L^s}\phi^s\Vert_{L^2(\mathbb{R}^n\setminus 5\Delta)}
    \\
    &\lesssim\Big(\int_{5\Delta} \sup_{B(x,s,s/2)}|\partial_s\AP|^2dx\Big)^{1/2} + \frac{1}{s}\frac{1}{\tau}e^{-c\frac{l(\Delta)^2}{\tau^2}}\Vert\phi^s\Vert_{L^2(\mathbb{R}^n\setminus 5\Delta)}
    \\
    &\lesssim\Big(\int_{5\Delta} \sup_{B(x,s,s/2)}|\partial_s\AP|^2dx\Big)^{1/2} + \frac{1}{l(\Delta)^2}\Vert\phi^s\Vert_{L^2(\mathbb{R}^n\setminus 5\Delta)}
    \\
    &\lesssim\Big(\int_{5\Delta} \sup_{B(x,s,s/2)}|\partial_s\AP|^2dx\Big)^{1/2} + \frac{1}{\sqrt{l(\Delta)}},
\end{align*}
gives 
\begin{align*}
    \int_{3\Delta}|v(x,k,s)|^2dx&\leq s^2\int_{5\Delta} \sup_{B(x,s,s/2)}|\partial_s\AP|^2dx + \frac{s^4}{l(\Delta)} .
\end{align*}
Hence in total, we obtain
\begin{align*}
    \int_{T(3\Delta)} |\partial_x w_s^{(2)}(x,s)|^2 sdxds&\lesssim \int_{T(3\Delta)}2\sup_{B(x,s,s/2)}|\partial_s\AP|^2sdxds + \int_0^{l(\Delta)}\frac{s^3}{l(\Delta)}ds
    \\
    &\lesssim |\Delta|,
\end{align*}
which completes the proof of \eqref{lemma:w_s^2SqFctBound2}.

\medskip

Lastly for \eqref{lemma:w_s^2SqFctBound3}, we first fix \(s>0\) and let \(Q_j=Q_j^s\subset 5\Delta\) such that \(l(Q_j)\approx s\), the \(Q_j\) and \(Q_{j+1}\) have nontrivial intersection but all the other pairs \(Q_j\) and \(Q_i\) do not intersect, and the union of the \(Q_j\) covers \(3\Delta\). Then we observe for a dualising functions \(h_i\in L^2(Q_i)\) and \(Q_i\subset 3\Delta\)

\begin{align*}
    \Big(\int_{Q_i}|v(x,k,s)|^2dx\Big)^{1/2}&\leq \int_0^k2\tau \Vert e^{-(k^2-\tau^2)L^s}\partial_x(\partial_s\AP\partial_x e^{-\tau^2L^s}\phi^s)\Vert_{L^2(Q)} d\tau
    \\
    &\leq\int_0^k\tau \int_{\mathbb{R}^n}\partial_s\AP\partial_x e^{-\tau^2L^s}\phi^s  \partial_x e^{-(k^2-\tau^2)L^s} h_i dx d\tau
    \\
    &\leq\int_0^k\tau\Big( \int_{4\Delta}\partial_s\AP\partial_x e^{-\tau^2L^s}\phi^s  \partial_x e^{-(k^2-\tau^2)L^s} h_i dx
    \\
    &\qquad + \frac{1}{s}\sum_{j\geq 2} \int_{2^{j+1}\Delta\setminus 2^j\Delta}\partial_s\AP\partial_x e^{-\tau^2L^s}\phi^s  \partial_x e^{-(k^2-\tau^2)L^s} h_i dx\Big) d\tau.
\end{align*}
For the first term we have by \reflemma{lemma:thetaPointwiseBoundOnGoodSetF} and Cauchy Schwarz inequality and \(L^2-L^2\) off diagonal estimates (\refprop{prop:off-diagonal})
\begin{align*}
    &\int_{4\Delta}\partial_s\AP\partial_x e^{-\tau^2L^s}\phi^s  \partial_x e^{-(k^2-\tau^2)L^s} h_i dx
    \\
    &\leq\sum_j\int_{Q_j}\partial_s\AP\partial_x e^{-\tau^2L^s}\phi^s  \partial_x e^{-(k^2-\tau^2)L^s} h_i dx
    \\
    &\lesssim \sum_j \sup_{Q_j} |\partial_s\AP|\kappa_0\sqrt{|Q_j|}\frac{1}{\sqrt{k^2-\tau^2}}e^{-c\frac{|i-j|^2s^2}{k^2-\tau^2}}\Vert h_i\Vert_{L^2}
    \\
    &\lesssim \sum_j \sup_{Q_j} |\partial_s\AP|\kappa_0\frac{1}{\sqrt{s}}e^{-c|i-j|^2}\Vert h_i\Vert_{L^2}.
\end{align*}
We can also notice that summing over \(i\) yields
\begin{align*}
    &\sum_i \int_{4\Delta}\partial_s\AP\partial_x e^{-\tau^2L^s}\phi^s  \partial_x e^{-(k^2-\tau^2)L^s} h_i dx
    \\
    &\lesssim \sum_j \sup_{Q_j} |\partial_s\AP|\kappa_0\frac{1}{\sqrt{s}}\sum_i e^{-c|i-j|^2}\Vert h_i\Vert_{L^2}
    \\
    &\lesssim \sum_j \sup_{Q_j} |\partial_s\AP|\kappa_0\frac{1}{\sqrt{s}}.
\end{align*}

For the second term we have by \(L^2-L^2\) off diagonal estimates
\begin{align*}
    &\int_{2^{j+1}\Delta\setminus 2^j\Delta}\partial_s\AP\partial_x e^{-\tau^2L^s}\phi^s  \partial_x e^{-(k^2-\tau^2)L^s} h_i dx
    \\
    &\lesssim \frac{1}{s}\Vert \partial_x e^{-\tau^2L^s}\phi^s\Vert_{L^2(2^{j+1}\Delta\setminus 2^j\Delta)}\Vert\partial_x e^{-(k^2-\tau^2)L^s} h_i\Vert_{L^2(2^{j+1}\Delta\setminus 2^j\Delta)}
    \\
    &\lesssim \frac{1}{s}2^{(j-1)/2}l(\Delta)^{1/2}\kappa_0\frac{1}{\sqrt{k^2-\tau^2}}e^{-c\frac{2^{2j}l(\Delta)^2}{k^2-\tau^2}}\Vert h_i\Vert_{L^2(Q)}.
\end{align*}
Summing over \(j\) yields
\begin{align*}
    &\sum_j\frac{1}{s}2^{(j-1)/2}l(\Delta)^{1/2}\kappa_0\frac{1}{\sqrt{k^2-\tau^2}}e^{-c\frac{2^{2j}l(\Delta)^2}{k^2-\tau^2}}\Vert h_i\Vert_{L^2(Q)}
    \\
    &\lesssim \frac{1}{s}l(\Delta)^{1/2}\kappa_0\frac{1}{\sqrt{k^2-\tau^2}}\frac{(k^2-\tau^2)^{3/4}}{l(\Delta)^{3/2}}
    \\
    &\lesssim \frac{1}{\sqrt{s}l(\Delta)}. 
\end{align*}
In total this concludes the proof of \eqref{lemma:w_s^2SqFctBound3} by
\begin{align*}
    \int_{T(\Delta)}|\partial_x w_s^{(2)}| dxds&\lesssim \int_0^{l(\Delta)}\sum_{Q_i=Q_i^{s}}\Big(\int_{Q_i}|\partial_x w_s^{(2)}|^2dx\Big)^{\frac{1}{2}}|Q_i|^\frac{1}{2} ds
    \\
    &\lesssim\int_0^{l(\Delta)}\sum_{Q_i=Q_i^{s}}\big(\frac{1}{s}\fint_{t_l}^{t_{l+1}}\int_{Q_i} |v|^2dxdk + \sup_{B(x,s,s/2)}|\partial_s\AP||Q_i|^{1/2}\big)|Q_i|^\frac{1}{2} ds
    \\
    &\lesssim\int_0^{l(\Delta)}\frac{1}{s}\fint_{t_l}^{t_{l+1}}\int_0^k\tau \sum_{Q_j=Q_j^s} \big(\sup_{Q_j} |\partial_s\AP|\kappa_0
    +\frac{1}{l(\Delta)}\big)d\tau ds \\
    &\qquad + \int_0^{l(\Delta)}\sum_{Q_i=Q_i^{s}} \sup_{B(x,s,s/2)}|\partial_s\AP||Q_i| ds
    \\
    &\lesssim\int_0^{l(\Delta)}\sum_{Q_j=Q_j^{s}} \sup_{Q_j} |\partial_s\AP| s ds + \int_0^{l(\Delta)}\frac{l(\Delta)}{s}\frac{s}{l(\Delta)}ds
    \\
    &\qquad + \int_{T(3\Delta)}\sup_{B(x,s,s/2)}|\partial_s\AP| dxds
    \\
    &\lesssim\int_{T(3\Delta)}\sup_{B(x,s,s/2)}|\partial_s\AP| dx ds + l(\Delta).
\end{align*}

\end{proof}

\subsection{The partial derivative part \(w_t\)}

\begin{lemma}\label{lemma:SqFctBoundsForw_t}
The following square function bounds involving \(w_t\) hold:
\begin{enumerate}[(i)]
    \item \[\int_{T(\Delta)}\frac{|w_t|^2}{s}dxds=\int_{T(\Delta)}\frac{|sL^se^{-\eta^2s^2L^s}\phi^s|^2}{s}dxds\lesssim |\Delta|;\]\label{lemma:w_tSqFctBound1}
    \item \[\int_{T(\Delta)}\frac{|s\partial_x w_t|^2}{s}dxds\lesssim |\Delta|;\label{lemma:w_tSqFctBound2}\]
    \item \[\int_{T(\Delta)}\frac{|s^2L^sw_t|^2}{s}dxds\lesssim |\Delta|.\label{lemma:w_tSqFctBound3}\]
\end{enumerate}
\end{lemma}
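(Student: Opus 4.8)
\textbf{Reduction to one quadratic estimate.} The plan is to reduce all three bounds to the single statement
\[
\mathcal{Q}_\psi:=\int_0^{l(\Delta)}\big\|\psi\big(\eta^2s^2L^s\big)(L^s)^{1/2}\phi^s\big\|_{L^2(\mathbb{R})}^2\,\frac{ds}{s}\;\lesssim_\eta\;|\Delta|
\]
for every \emph{admissible} \(\psi\) (holomorphic and bounded on a sector containing the numerical ranges of the \(L^s\), with \(\psi(0)=0\) and \(|\psi(z)|\lesssim\min\{|z|^\sigma,|z|^{-\sigma}\}\) for some \(\sigma>0\)); only \(\psi_1(z)=z^{1/2}e^{-z}\), \(\psi_2(z)=ze^{-z}\), \(\psi_3(z)=z^{3/2}e^{-z}\) are needed. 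Granting \(\mathcal Q_\psi\lesssim|\Delta|\), the reduction is pure functional-calculus algebra together with the Kato estimates \refprop{prop:KatoConjecture}, \refprop{cor:NablaL^-1/2Bounded}: since \(L^se^{-\eta^2 s^2L^s}=(\eta^2 s^2)^{-1/2}\psi_1(\eta^2 s^2 L^s)(L^s)^{1/2}\) and \(w_t(\cdot,s)=c_\eta\, sL^se^{-\eta^2 s^2L^s}\phi^s\) by \refcor{cor:PointwiseBoundsofw_tAndrhoByM(nablaphi^s)}, the integrand of \eqref{lemma:w_tSqFctBound1} equals, up to \(\eta\)--dependent constants, \(\tfrac1s\big|\psi_1(\eta^2 s^2 L^s)(L^s)^{1/2}\phi^s\big|^2\), so \eqref{lemma:w_tSqFctBound1} is \(\lesssim_\eta\mathcal Q_{\psi_1}\). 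For \eqref{lemma:w_tSqFctBound2} write \(\partial_x=[\partial_x(L^s)^{-1/2}](L^s)^{1/2}\), bound \(\|\partial_x L^se^{-\eta^2 s^2L^s}\phi^s\|_{L^2}\lesssim\|(L^s)^{3/2}e^{-\eta^2 s^2L^s}\phi^s\|_{L^2}\) by \refprop{cor:NablaL^-1/2Bounded}, and use \((L^s)^{3/2}e^{-\eta^2 s^2L^s}=(\eta^2 s^2)^{-1}\psi_2(\eta^2 s^2 L^s)(L^s)^{1/2}\) to turn \eqref{lemma:w_tSqFctBound2} into \(\lesssim_\eta\mathcal Q_{\psi_2}\); and \(L^sw_t=c_\eta\,s(L^s)^2e^{-\eta^2 s^2L^s}\phi^s\) with \((L^s)^2e^{-\eta^2 s^2L^s}=(\eta^2 s^2)^{-3/2}\psi_3(\eta^2 s^2 L^s)(L^s)^{1/2}\) turns \eqref{lemma:w_tSqFctBound3} into \(\lesssim_\eta\mathcal Q_{\psi_3}\). (Alternatively, \eqref{lemma:w_tSqFctBound3} relates to \eqref{lemma:w_tSqFctBound1} via the identity \(s^2 L^sw_t=c_1w_t+c_2 s\,\partial_t v(\cdot,\eta s,s)\) coming from the PDE \(\partial_t v=\tfrac vt-2tL^s v\) of \reflemma{lemma:CaccTypeForw_t} for \(v(x,t,s)=\partial_t e^{-t^2L^s}\phi^s\); and \eqref{lemma:w_tSqFctBound2}--\eqref{lemma:w_tSqFctBound3} can be extracted from \eqref{lemma:w_tSqFctBound1} via the Caccioppoli inequality \reflemma{lemma:CaccTypeForw_t} after a Whitney covering of the slab \(\{x\in\Delta,\ t\approx\eta s\}\), exactly as for the \(w_s^{(2)}\) bounds — each route again lands on \(\mathcal Q_\psi\).)

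\textbf{The frozen estimate.} If \(L^s\equiv L\) and \(\phi^s\equiv\phi\) were \(s\)--independent, then substituting \(r=\eta^2 s^2\) gives \(\mathcal Q_\psi=\tfrac12\int_0^{\eta^2l(\Delta)^2}\|\psi(rL)h\|^2\tfrac{dr}{r}\le\tfrac12\int_0^\infty\|\psi(rL)h\|^2\tfrac{dr}{r}\) with \(h=L^{1/2}\phi\). The square function estimate \(\int_0^\infty\|\psi(rL)h\|_{L^2}^2\tfrac{dr}{r}\lesssim\|h\|_{L^2}^2\) for the \(t\)--independent heat semigroup then applies — for \(\psi_1\) it is just the energy identity \(\tfrac{d}{dr}\|e^{-rL}h\|^2\le-2\lambda_0\|\nabla e^{-rL}h\|^2\) combined with \refprop{prop:KatoConjecture}, and for \(\psi_2,\psi_3\) it is the corresponding standard Littlewood--Paley estimate coming from the heat kernel bounds of \cite{hofmann_lp_2022} (equivalently, the bounded holomorphic functional calculus of \(L\) on \(L^2\)). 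Using \refprop{prop:KatoConjecture} once more and \reflemma{lemma:UniformBoundOnM[nablaphi^s]} (\(\|\partial_x\phi^s\|_{L^2}\lesssim\kappa_0|\Delta|^{1/2}\) uniformly in \(s\)), this yields \(\mathcal Q_\psi\lesssim\|L^{1/2}\phi\|_{L^2}^2\lesssim\|\nabla\phi\|_{L^2}^2\lesssim\kappa_0^2|\Delta|\). This is the model computation behind all three assertions, and it shows why the convergence near \(s=0\) is genuine cancellation in \(s\): no pointwise-in-\(s\) bound on \(\|w_t(\cdot,s)\|_{L^2}\) can replace it.

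\textbf{Removing the \(s\)--dependence — the main obstacle.} Because \(L^s\) genuinely varies with \(s\) one cannot integrate against a single operator, and this is where essentially all of the work lies. The plan is to freeze at the top scale: with \(L_0:=L^{l(\Delta)}\), \(\phi_0:=\phi^{l(\Delta)}\), split \(\psi(\eta^2 s^2 L^s)(L^s)^{1/2}\phi^s=\psi(\eta^2 s^2 L_0)(L_0)^{1/2}\phi_0+\mathcal E(s)\); the frozen term is \(\lesssim\kappa_0^2|\Delta|\) by the previous step. The error \(\mathcal E(s)=\int_s^{l(\Delta)}\partial_\sigma\big[\psi(\eta^2 s^2 L^\sigma)(L^\sigma)^{1/2}\phi^\sigma\big]\,d\sigma\) is expanded by Duhamel's formula in the freezing parameter; differentiating in \(\sigma\) produces \(\partial_\sigma L^\sigma=\Div(\partial_\sigma A(\cdot,\sigma)\nabla\cdot)\) and \(\partial_\sigma\phi^\sigma\), the latter controlled by \(|\partial_\sigma\partial_x\phi^\sigma|\lesssim|\partial_\sigma A(\cdot,\sigma)|+\fint_{3\Delta}|\partial_\sigma A|\) from \reflemma{lemma:UniformBoundOnM[nablaphi^s]}, so each resulting term is governed by \(\sup_{B(x,\sigma,\sigma/2)}|\partial_\sigma A|\) against a heat-semigroup factor of spatial scale \(\sim\eta s\). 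Squaring, integrating \(\tfrac{ds}{s}\), applying Minkowski and Cauchy--Schwarz, using the \(L^2\)--\(L^2\) off--diagonal estimates of \refprop{prop:off-diagonal} to localize and to sum in the spatial variable, and exchanging the \(s\)-- and \(\sigma\)--integrations, the \(s\)--integral is absorbed and one is left — up to lower-order pieces involving \(\|\phi^\sigma\|_{L^2}\lesssim l(\Delta)\kappa_0|\Delta|^{1/2}\) (Poincar\'e) — with a constant multiple of \(\int_{T(C\Delta)}\sup_{B(x,\sigma,\sigma/2)}|\partial_\sigma A|^2\,\sigma\,dx\,d\sigma\lesssim|\Delta|\) by the Carleson condition \eqref{condition:L^2Carlesontypecond} (a consequence of \eqref{condition:L^1Carlesontypecond} and \(|\partial_t A|\,t\le C\)). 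The delicate point is that only the \emph{local} hypothesis \eqref{condition:L^1Carlesontypecond} is available — which in particular forbids \(|\partial_t A|\sim1/t\) outside a Carleson-sparse set — so the \(s\)--integral must be split scale by scale and the "bad" scales disposed of by the crude pointwise bounds \(|w_t(y,s)|\lesssim\eta\,M(\partial_x\phi^s)(x)\) and \(M[\partial_x\phi^s]\lesssim\kappa_0\) of \refcor{cor:PointwiseBoundsofw_tAndrhoByM(nablaphi^s)} and \reflemma{lemma:UniformBoundOnM[nablaphi^s]}, keeping every error term genuinely summable. Finally, the \(\psi_3\)--instance needed for \eqref{lemma:w_tSqFctBound3} is handled verbatim, using that \(t\partial_{tt}e^{-t^2L}\) obeys the same kernel bounds as \(\partial_t e^{-t^2L}\) (as in the proof of \reflemma{lemma:nablaSemigroupBoundedByNablaf}).
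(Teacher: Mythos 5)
Your reductions of (ii) and (iii) to a single quadratic estimate, and the frozen-coefficient model computation, are sound and essentially parallel to what the paper does (the paper splits the semigroup as \(e^{-\eta^2s^2L^s}=e^{-\frac{\eta^2s^2}{2}L^s}e^{-\frac{\eta^2s^2}{2}L^s}\) and uses \refprop{prop:L2NormBoundsOfHeatSemigroup} to reduce (ii), (iii) to (i); your functional-calculus phrasing is equivalent). The problem is the step you yourself flag as "the main obstacle": removing the \(s\)-dependence of \(L^s\) and \(\phi^s\). Freezing at the top scale and writing \(\mathcal E(s)=\int_s^{l(\Delta)}\partial_\sigma[\cdots]\,d\sigma\) forces you, after Minkowski/Cauchy--Schwarz, to control a quantity of the shape
\[
\int_0^{l(\Delta)}\frac{1}{s}\Big(\int_s^{l(\Delta)}g(\sigma)\,d\sigma\Big)^2ds,
\qquad
g(\sigma)^2\approx\int_{5\Delta}\sup_{B(x,\sigma,\sigma/2)}|\partial_\sigma A|^2\,dx,
\]
by the hypothesis \(\int_0^{l(\Delta)}g(\sigma)^2\,\sigma\,d\sigma\lesssim|\Delta|\). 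This is exactly the endpoint \(r=0\) of Hardy's inequality, where it fails: e.g.\ \(g(\sigma)=\sigma^{-1}(\log(1/\sigma))^{-1}\) satisfies the Carleson-type bound but \(\int_s^{l}g\approx\log\log(1/s)\), so the left-hand side diverges. Your proposed rescue --- discarding "bad scales" via the crude bounds \(|w_t|\lesssim\eta\kappa_0\) --- cannot close this, because \(\int_{\mathrm{bad}}\tfrac{ds}{s}\) need not be finite under \eqref{condition:L^1Carlesontypecond}: the Carleson condition averages over \(x\) as well as \(s\) and does not make the set of bad heights sparse in \(\tfrac{ds}{s}\). (This freezing strategy is essentially what works under the stronger mixed \(L^1\!-\!L^\infty\) condition \eqref{L1-LInftyCond} of \cite{ulmer_mixed_2024}, which is precisely what this paper is weakening.)

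The paper's proof of \eqref{lemma:w_tSqFctBound1} avoids quadratic error terms altogether. Writing \(\AP=B^2\) and integrating by parts in \(x\), it identifies \(s L^s e^{-\eta^2s^2L^s}\phi^s\) with \(\partial_s\big[e^{-\eta^2s^2L^s}\phi^s\big]\) up to the correction terms \(w_s^{(1)},w_s^{(2)}\), so that
\[
\int_{T(\Delta)}\frac{|w_t|^2}{s}\,dx\,ds
= \underbrace{\int_0^{l(\Delta)}\partial_s\Vert B\partial_x e^{-\eta^2s^2L^s}\phi^s\Vert_{L^2}^2\,ds}_{\text{exact derivative}}
+\;\text{terms \emph{linear} in }\partial_s\AP,\ \partial_x w_s^{(1)},\ \partial_x w_s^{(2)}.
\]
The exact derivative telescopes to \(\lesssim\Vert\partial_x\phi\Vert_{L^2}^2\lesssim|\Delta|\), and each remaining term is a pairing of \(\sup_{B(x,s,s/2)}|\partial_sA|\) (a Carleson measure by \eqref{condition:L^1Carlesontypecond}) against a uniformly bounded nontangential maximal function of semigroup expressions, estimated via \refcor{cor:PointwiseBoundsofw_tAndrhoByM(nablaphi^s)}, \reflemma{lemma:UniformBoundOnM[nablaphi^s]} and the off-diagonal estimates of \refprop{prop:off-diagonal} for the far-field contribution. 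That linearity in \(\partial_sA\) is the whole point of the lemma under the weak hypothesis, and it is the idea missing from your proposal.
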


\begin{proof}
We start with proving \eqref{lemma:w_tSqFctBound1}. First, note that we can write \(\AP=B^2\) for \(B>0\). And hence we obtain
\begin{align*}
    \int_{T(\Delta)}\frac{|sL^se^{-\eta^2s^2L^s}\phi^s|^2}{s}dxds&\leq\int_{\mathbb{R}^n\times(0,l(\Delta))}\frac{|sL^se^{-\eta^2s^2L^s}\phi^s|^2}{s}dxds
    \\
    &=\int_{\mathbb{R}^n\times (0,l(\Delta))} B\partial_x L^{s}e^{-\eta^2tL^s}\phi^s \cdot B\partial_x e^{-\eta^2s^2L^s}\phi^s dxdt
    \\
    &=\int_{\mathbb{R}^n\times (0,l(\Delta))} B \partial_s L^se^{-\eta^2s^2L^s}\phi^s \cdot B\partial_x e^{-\eta^2s^2L^s}\phi^s dxds
    \\
    &\qquad - \int_{\mathbb{R}^n\times (0,l(\Delta))} B \partial_x w_s^{(1)} \cdot B\partial_x e^{-\eta^2s^2L^s}\phi^s dxds
    \\
    &\qquad - \int_{\mathbb{R}^n\times (0,l(\Delta))} B \partial_x w_s^{(2)} \cdot B\partial_x e^{-\eta^2s^2L^s}\phi^s dxds 
    \\
    &=\int_{0} ^{l(\Delta)}\partial_s\Vert B\partial_x e^{-\eta^2s^2L^s}\phi^s\Vert_{L^2(\mathbb{R}^n)}^2 dxds
    \\
    &\qquad + \int_{\mathbb{R}^n\times (0,l(\Delta))} \partial_s\AP \partial_x e^{-\eta^2s^2L^s}\phi^s \cdot \partial_x e^{-\eta^2s^2L^s}\phi^s dxds
    \\
    &\qquad - \int_{\mathbb{R}^n\times (0,l(\Delta))} B \partial_x w_s^{(1)} \cdot B\partial_x e^{-\eta^2s^2L^s}\phi^s dxds
    \\
    &\qquad - \int_{\mathbb{R}^n\times (0,l(\Delta))} B \partial_x w_s^{(2)} \cdot B\partial_x e^{-\eta^2s^2L^s}\phi^s dxds 
    \\
    &=:I+II+III+IV.
\end{align*}

Looking at the integrals separately, first we have
\[I=\Vert B\partial_x e^{-\eta^2(3l(\Delta))^2L^s}\phi^{3l(\Delta)}\Vert_{L^2(\mathbb{R}^n)}^2 - \Vert B\partial_x \phi^0\Vert_{L^2(\mathbb{R}^n)}^2 \lesssim |\Delta|,\]
and
\[II\lesssim \Vert \sup |\partial_s \AP|\Vert_{\mathcal{C}}\int_{\mathbb{R}^n} \tilde{N}_2(|\partial_x e^{-\eta^2s^2L^s}\phi^s|)^2 dx. \]
From \reflemma{lemma:UniformBoundOnM[nablaphi^s]} we have
\[\int_{5\Delta}\tilde{N}_2(|\partial_x e^{-\eta^2s^2L^s}\phi^s|)^2 dx\lesssim |\Delta|,\]
and using \(L^2-L^2\) off diagonal estimates and Poincar\'{e}'s inequality, we also have
\begin{align*}
    \int_{\mathbb{R}^n\setminus 4\Delta}\tilde{N}_2(|\partial_x e^{-\eta^2s^2L^s}\phi^s|)^2 dx
    &\lesssim \sum_{j\geq 3}\int_{2^j \Delta\setminus 2^{j-1}\Delta}\tilde{N}_2(|\partial_x e^{-\eta^2s^2L^s}\phi^s|)^2 dx
    \\
    & \lesssim \sum_{j\geq 3}\int_{2^j \Delta\setminus 2^{j-1}\Delta}\frac{1}{2^{2j}}\frac{1}{l(\Delta)^2}\sup_s\Vert \phi^s\Vert_{L^2}^2 dx
    \\
    & \lesssim \sum_{j\geq 3}\int_{2^j \Delta\setminus 2^{j-1}\Delta}\frac{1}{2^{2j}} dx
    \\
    &\lesssim |\Delta|.
\end{align*}
Next, we have by \reflemma{lemma:thetaPointwiseBoundOnGoodSetF} 
\begin{align*}
    III&= \int_0^{l(\Delta)}\int_{\mathbb{R}^n}w_s^{(1)}(x,s)L^se^{-\eta^2s^2L^s}\phi^s dx ds
    \\
    &= \int_0^{l(\Delta)}\int_{\mathbb{R}^n}A\partial_x\partial_s\phi^s(x,s)\partial_xe^{-2\eta^2s^2L^s}\phi^s dx ds
    \\
    &\leq\int_0^{l(\Delta)}\sum_{\substack{Q\subset 5\Delta\\l(Q)\approx s, Q \textrm{ finite overlap}}} \sup_{y\in Q} \big(|\partial_s\AP(y,s)| + \fint_{3\Delta}|\partial_s A|dx\big)\big(\int_Q |\partial_x e^{-2\eta^2s^2L^s}\phi^s|^2\big)^{1/2}ds
    \\
    &\leq\int_{T(5\Delta)}\sup_{(y,t)\in B(x,s,s/2)}|\partial_s A(y,t)| dxds\lesssim |\Delta|.
\end{align*}

At last, we have
\begin{align*}
    |IV|&=\int_0^{l(\Delta)}\int_0^s\int_{\mathbb{R}^n}|2\tau\partial_s\AP\partial_x e^{-\eta^2\tau^2L^s}\phi^s\cdot \partial_x e^{-\eta^2(s^2-\tau^2)L^s}L^se^{-\eta^2 s^2L^s}\phi^s| dx dt ds
    \\
    &\leq \Vert \sup \partial_s A\Vert_{\mathcal{C}}\int_{\mathbb{R}^n}\tilde{N}\Big(\int_0^s \tau|\partial_x e^{-\eta^2\tau^2L^s}\phi^s| |\partial_x e^{-\eta^2(s^2-\tau^2)L^s}L^se^{-\eta^2 s^2L^s}\phi^s|d\tau\Big)(x) dx
\end{align*}

We note that the appearing nontangential maximal function is pointwise bounded. To see this we take \(x\in 5\Delta, t>0\) and have
\begin{align*}
    &\fint_{B(x,t,t/2)} \int_0^s \tau|\partial_x e^{-\eta^2\tau^2L^s}\phi^s| |\partial_x e^{-\eta^2(s^2-\tau^2)L^s}L^se^{-\eta^2 s^2L^s}\phi^s|d\tau dxds
    \\
    &\lesssim \fint_{t/2}^{3t/2} \int_0^s \tau\Big(\fint_{B(x,t/2)} |\partial_x e^{-\eta^2\tau^2L^s}\phi^s|^2dx\Big)^{1/2} \Big(\fint_{B(x,t/2)}|\partial_x e^{-\eta^2(s^2-\tau^2)L^s}L^se^{-\eta^2 s^2L^s}\phi^s|^2 dx\Big)^{1/2} d\tau ds
\end{align*}  

By \(L^2-L^2\)-off diagonal estimates (\refprop{prop:off-diagonal}), \reflemma{lemma:thetaPointwiseBoundOnGoodSetF}, and \refcor{cor:PointwiseBoundsofw_tAndrhoByM(nablaphi^s)} we obtain 
\[\Big(\fint_{B(x,t/2)} |\partial_x e^{-\eta^2\tau^2L^s}\phi^s|^2dx\Big)^{1/2}\lesssim \frac{1}{t}\frac{t}{\tau}\tau \kappa_0\lesssim \kappa_0,\]
and
\begin{align*}
    \Big(\fint_{B(x,t/2)}|\partial_x e^{-\eta^2(s^2-\tau^2)L^s}L^se^{-\eta^2 s^2L^s}\phi^s|^2 dx\Big)^{1/2}&\lesssim \frac{1}{\sqrt{s^2-\tau^2}}M[|L^se^{-\eta^2 s^2L^s}\phi^s|^2]^{1/2}
    \\
    &\lesssim \frac{1}{\sqrt{s^2-\tau^2}}\frac{1}{s}M[|w_t(\cdot,s)|^2]^{1/2}
    \\
    &\lesssim \frac{1}{\sqrt{s^2-\tau^2}}\frac{1}{s}M[M[|\partial_x\phi^s|]^2]^{1/2}.
\end{align*}
Hence in total with \reflemma{lemma:UniformBoundOnM[nablaphi^s]}
\begin{align*}\fint_{B(x,t,t/2)} &\int_0^s \tau|\partial_x e^{-\eta^2\tau^2L^s}\phi^s| |\partial_x e^{-\eta^2(s^2-\tau^2)L^s}L^se^{-\eta^2 s^2L^s}\phi^s|d\tau dxds
\\
&\qquad\lesssim  \fint_{t/2}^{3t/2}\int_0^s\tau \frac{1}{\sqrt{s^2-\tau^2}}\frac{1}{s}d\tau ds\leq C. \end{align*}

and

\begin{align*}
    &\int_{5\Delta}\tilde{N}\Big(\int_0^s \tau|\partial_x e^{-\eta^2\tau^2L^s}\phi^s| |\partial_x e^{-\eta^2(s^2-\tau^2)L^s}L^se^{-\eta^2 s^2L^s}\phi^s|d\tau\Big)(x) dx
    \\
    &\lesssim |\Delta|
\end{align*}

For \(x\in 2^j \Delta\setminus 2^{j-1}\Delta\mathbb{R}^n\setminus 4\Delta, 3l(\Delta)>t>0\) and \(j\geq 3\) however, we obtain with \(L^2-L^2\) off diagonal estimates
\begin{align*}
    \Big(\fint_{B(x,t/2)} |\partial_x e^{-\eta^2\tau^2L^s}\phi^s|^2dx\Big)^{1/2}&\lesssim \frac{1}{\sqrt{t}}\frac{1}{\tau}e^{-c\frac{2^{2j}l(\Delta)^2}{\tau^2}}\Vert \phi^s\Vert_{L^2(3\Delta)}\lesssim  \frac{1}{\sqrt{t}}\frac{1}{2^{j}l(\Delta)}\Vert \phi^s\Vert_{L^2(3\Delta)}
    \\
    &\lesssim  \frac{1}{2^j\sqrt{t}}\Vert \partial_x\phi^s\Vert_{L^2(3\Delta)}\lesssim  \frac{1}{2^j\sqrt{t}}\sqrt{|\Delta|},
\end{align*}
and
\begin{align*}
    \Big(\fint_{B(x,t/2)}|\partial_x e^{-\eta^2(s^2-\tau^2)L^s}L^se^{-\eta^2 s^2L^s}\phi^s|^2 dx\Big)^{1/2}&\lesssim \frac{1}{\sqrt{t}}\frac{1}{\sqrt{s^2-\tau^2}}e^{-c\frac{2^{2j}l(\Delta)^2}{s^2-\tau^2}}\Vert L^se^{-\eta^2 s^2L^s}\phi^s\Vert_{L^2}
    \\
    &\lesssim \frac{1}{\sqrt{t}}\frac{1}{2^jl(\Delta)}\frac{1}{s}\Vert M[\partial_x\phi^s]\Vert_{L^2(3\Delta)}
    \\
    &\lesssim \frac{1}{2^{j}\sqrt{t}}\frac{1}{s}\frac{1}{\sqrt{|\Delta|}}\kappa_0
\end{align*}

Hence we obtain for \(x\in 2^j \Delta\setminus 2^{j-1}\Delta\)
\begin{align*}\fint_{B(x,t,t/2)} &\int_0^s \tau|\partial_x e^{-\eta^2\tau^2L^s}\phi^s| |\partial_x e^{-\eta^2(s^2-\tau^2)L^s}L^se^{-\eta^2 s^2L^s}\phi^s|d\tau dxds
\\
&\qquad\lesssim  \fint_{t/2}^{3t/2}\int_0^s\tau \frac{1}{2^j\sqrt{t}}\sqrt{|\Delta|}\frac{1}{2^{j}\sqrt{t}}\frac{1}{s}\frac{1}{\sqrt{|\Delta|}}d\tau ds\leq \frac{1}{2^{2j}}, \end{align*}
and

\begin{align*}
    &\int_{\mathbb{R}^n\setminus 5\Delta}\tilde{N}\Big(\int_0^s \tau|\partial_x e^{-\eta^2\tau^2L^s}\phi^s| |\partial_x e^{-\eta^2(s^2-\tau^2)L^s}L^se^{-\eta^2 s^2L^s}\phi^s|d\tau\Big)(x) dx
    \\
    &\lesssim \sum_{j\geq 3}\int_{2^j\Delta\setminus 2^{j-1}\Delta}\tilde{N}\Big(\int_0^s \tau|\partial_x e^{-\eta^2\tau^2L^s}\phi^s| |\partial_x e^{-\eta^2(s^2-\tau^2)L^s}L^se^{-\eta^2 s^2L^s}\phi^s|d\tau\Big)(x) dx
    \\
    &\lesssim  \sum_{j\geq 3}\int_{2^j\Delta\setminus 2^{j-1}\Delta}\frac{1}{2^{2j}}dx\lesssim |\Delta|,
\end{align*}
whence the integral \(|IV|\lesssim |\Delta|\).

\hfill\\
The proofs of \eqref{lemma:w_tSqFctBound2} and \eqref{lemma:w_tSqFctBound3} rely on \eqref{lemma:w_tSqFctBound1} and \refprop{prop:L2NormBoundsOfHeatSemigroup}. For \eqref{lemma:w_tSqFctBound2} we see that
\begin{align*}
    &\int_0^{l(\Delta)}\int_{\Delta}\frac{|s^2\partial_x L^s e^{-\eta^2s^2L^s}\phi^s|^2}{s}dxds
    \\
    &\leq\int_0^{l(\Delta)}\int_{\mathbb{R}^n}\frac{|s^2\partial_x L^s e^{-\eta^2s^2L^s}\phi^s|^2}{s} dxds
    \\
    &=\int_0^{l(\Delta)}\int_{\mathbb{R}^n}\frac{|s\partial_x e^{-\frac{\eta^2s^2}{2}L^s}(sL^s e^{-\frac{\eta^2s^2}{2}L^s}\phi^s)|^2}{s} dxds
    \\
    &\lesssim \int_0^{l(\Delta)}\int_{\mathbb{R}^n}\frac{|s L^s e^{-\frac{\eta^2s^2}{2}L^s}\phi^s|^2}{s} dxds,
\end{align*}
where the last integral is bounded by \eqref{lemma:w_tSqFctBound1} after a change of variable argument. Similar we observe for \eqref{lemma:w_tSqFctBound3}
\begin{align*}
    &\int_{T(\Delta)}\frac{|s^2L^s L^s e^{-\eta^2s^2L^s}\phi^s|^2}{s}dxds
    \\
    &=\int_0^{l(\Delta)}\int_{\mathbb{R}^n}\frac{|s^2 L^s L^s e^{-\eta^2s^2L^s}\phi^s|^2}{s} dxds
    \\
    &=\int_0^{l(\Delta)}\int_{\mathbb{R}^n}\frac{|s L^s e^{-\frac{\eta^2s^2}{2}L^s}(sL^s e^{-\frac{\eta^2s^2}{2}L^s}\phi^s)|^2}{s} dxds
    \\
    &\lesssim \int_0^{l(\Delta)}\int_{\mathbb{R}^n}\frac{|s L^s e^{-\frac{\eta^2s^2}{2}L^s}\phi^s|^2}{s} dxds.
\end{align*}

\end{proof}

In contrast to \(\partial_s\phi^s-w_s^{(1)}\) and \(w_s^{(2)}\) we have a certain local Harnack-type property for \(w_t\). First we have 

\begin{lemma}\label{lemma:localHarnackTypeInequalityForw_t}
    Let \(Q\subset\mathbb{R}^n\) be a cube with side length \(l(Q)\approx s\approx R_0\) and \(\hat{Q}:=(1+\varepsilon)Q\) be an enlarged cube for some fixed \(\frac{1}{2}>\varepsilon>0\). Then
    \begin{align*}
    \sup_{(x,s)\in Q\times (R_0,2R_0]}w_t(x,s)
    &\lesssim \int_{(1-\varepsilon)R_0}^{2(1+\varepsilon)R_0}\fint_{\hat{Q}\times((1-\varepsilon)R_0,2(1+\varepsilon)R_0)}\frac{|w_t(x,s)|^2}{s}dxds 
    \\
    &\qquad + \int_{(1-\varepsilon)R_0}^{2(1+\varepsilon)R_0}\fint_{\hat{Q}\times((1-\varepsilon)R_0,2(1+\varepsilon)R_0)}|\partial_x v_1(x,t^2,s)|^2sdxdtds 
    \\
    &\qquad + \int_{(1-\varepsilon)R_0}^{2(1+\varepsilon)R_0}\fint_{\hat{Q}\times((1-\varepsilon)R_0,2(1+\varepsilon)R_0)}|\partial_x v_2(x,t^2,s)|^2sdxdtds.
\end{align*}

    As a consequence, we have
    \begin{align*}
    \sum_{k\leq k_0}\sum_{Q\in \mathcal{D}_k(\Delta)}|Q|\sup_{Q\times (2^{-k},2^{-k+1}]}|w_t|^2&\lesssim |\Delta|.
    \end{align*}
\end{lemma}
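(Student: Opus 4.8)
The plan is to establish the pointwise (Harnack-type) bound by a parabolic local boundedness argument for $w_t$, and then to obtain the Carleson-type summability by rescaling and summing it against the square function bounds already proved.

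First I would set up the parabolic structure. For each fixed $s$ the function $v(x,t,s)=\partial_te^{-t^2L^s}\phi^s$ of \reflemma{lemma:CaccTypeForw_t} solves, after the substitution $\tau=t^2$, a uniformly parabolic equation $\partial_\tau v+L^sv=\frac{v}{2\tau}$ whose zeroth-order coefficient is comparable to $R_0^{-2}$ on the strip $\tau\approx R_0^2$ and hence is admissible, and $w_t(x,s)=2\eta^2 s\,L^se^{-\eta^2s^2L^s}\phi^s$ is, up to a fixed ($\eta$-dependent) constant, the restriction $v(x,\eta s,s)$ of $v$ to the slice $t=\eta s$. Moser's local boundedness estimate for this equation, applied over the parabolic cylinder $\hat Q\times\big((1-\varepsilon)R_0,2(1+\varepsilon)R_0\big)$, yields, uniformly in $s\in(R_0,2R_0]$,
\[\sup_{x\in Q}|w_t(x,s)|^2\ \lesssim_{\varepsilon,\eta}\ R_0^2\,g(s),\qquad g(s):=\fint_{\hat Q\times((1-\varepsilon)R_0,2(1+\varepsilon)R_0)}|v(x,t,s)|^2\,dx\,dt .\]
To upgrade this to a supremum over $s$ I would use $\sup_Jg\le\fint_Jg+\int_J|g'(s)|\,ds$ on $J=\big((1-\varepsilon)R_0,2(1+\varepsilon)R_0\big)$: the average term $\fint_Jg$ reproduces (after undoing $\tau=t^2$) the first term on the right-hand side, while $g'(s)$ is a multiple of $\fint v\,\partial_sv$, where, using the identity $\partial_sw=v_1+v_2$ from the construction of $v_1,v_2$ and $\partial_\tau v_1=-L^sv_1$,
\[\partial_sv(x,t,s)=-2t\,L^sv_1(x,t^2,s)+2t\,\partial_\tau v_2(x,t^2,s).\]
Integrating by parts in $x$ in the first term and in $x$ and $\tau$ in the second term (using the PDE for $v$ to dispose of $\partial_\tau v_2$), against a cut-off vanishing on the boundary of the enlarged cylinder, converts $L^sv_1$ and $L^sv_2$ into the first-order quantities $\partial_xv_1,\partial_xv_2$ that appear in the statement; the Caccioppoli inequality \reflemma{lemma:CaccTypeForw_t} bounds $\|\partial_xv\|_{L^2}$ by $R_0^{-1}\|v\|_{L^2}$ over a slightly larger cylinder, and Young's inequality then produces the three asserted terms (after hiding a small multiple of $\sup_Jg$). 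The leftover genuinely lower-order pieces — those still carrying $v_2$ itself, or $\partial_xe^{-\tau L^s}\phi^s$ — are handled with the pointwise bound $|\partial_xe^{-\tau L^s}\phi^s|\lesssim\kappa_0$ from \reflemma{lemma:thetaPointwiseBoundOnGoodSetF} together with the $L^2$--$L^2$ off-diagonal estimates of \refprop{prop:off-diagonal}.

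For the consequence I would apply the first inequality with $R_0=2^{-k}$ to each $Q\in\mathcal{D}_k(\Delta)$, multiply by $|Q|$, and sum over $k\le k_0$ and $Q\in\mathcal{D}_k(\Delta)$. The enlarged space--time boxes have bounded overlap — in $x$ at each scale by the finite-overlap property of $\mathcal{D}_k$, and over only $O(1)$ consecutive scales in the time variables — so the three families of right-hand sides sum, respectively, to $\lesssim\int_{T(C\Delta)}\frac{|w_t|^2}{s}\,dx\,ds$, to $\lesssim\int_{T(C\Delta)}\big(|\partial_x\partial_s\phi^s|^2+|\partial_xw_s^{(1)}|^2\big)s\,dx\,ds$, and to $\lesssim\int_{T(C\Delta)}|\partial_xw_s^{(2)}|^2s\,dx\,ds$; here the extra $t$-integration is collapsed onto the available square functions via $\|\partial_xe^{-t^2L^s}\partial_s\phi^s\|_{L^2}\lesssim\|\partial_x\partial_s\phi^s\|_{L^2}$ for $t\approx s$ (\reflemma{lemma:nablaSemigroupBoundedByNablaf}) and the analogous localization for $v_2$. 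By \eqref{lemma:w_tSqFctBound1} of \reflemma{lemma:SqFctBoundsForw_t}, \eqref{lemma:w_s^1SqFctBound2} of \reflemma{lemma:SqFctBoundsForpartial_sPhiAndw_s^1}, and \eqref{lemma:w_s^2SqFctBound2} of \reflemma{lemma:SqFctBoundsForw_s^2}, each of these is $\lesssim|\Delta|$, which is the claim. I expect the main obstacle to be the first step: organizing $\partial_sv$ and the boundary and lower-order terms it generates so that, after the integrations by parts, only the already-controlled square functions $\partial_xv_1,\partial_xv_2$ (and a harmless local $L^2$ norm of $w_t$) remain — this is precisely where the off-diagonal estimates are needed and where the local nature of the Carleson condition makes the bookkeeping delicate.
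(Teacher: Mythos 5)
Your proposal is correct and follows essentially the same route as the paper: the pointwise bound is obtained by local (Moser/Caccioppoli) boundedness for the parabolic equation satisfied by $v$, upgraded to a supremum in $s$ via the fundamental theorem of calculus and the identity expressing $\partial_s v$ through $v_1,v_2$ with integrations by parts (the paper outsources exactly this argument to Lemma 7.9 of \cite{ulmer_mixed_2024}), and the consequence is then summed over Whitney boxes against \reflemma{lemma:SqFctBoundsForw_t}, \reflemma{lemma:SqFctBoundsForpartial_sPhiAndw_s^1} and the proof of \eqref{lemma:w_s^2SqFctBound2}. Only minor normalization slips (e.g.\ the factor $R_0^2$ in front of $g(s)$, and $\sup w_t$ versus $\sup|w_t|^2$) distinguish your write-up, and these do not affect the argument.
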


The consequence in above lemma resembles a version of \eqref{lemma:w_tSqFctBound1} in \reflemma{lemma:SqFctBoundsForw_t} where the supremum is taken on each Whitney cube. Hence it is not surprising that it holds, but the proof is technical.

\begin{proof}
    First, we would like to refer to the proof of Lemma 7.9 in \cite{ulmer_mixed_2024}, where an intermediary step proves
    \begin{align*}
    \sup_{(x,s)\in Q\times (R_0,2R_0]}w_t(x,s)
    &\lesssim \int_{(1-\varepsilon)R_0}^{2(1+\varepsilon)R_0}\fint_{\hat{Q}\times((1-\varepsilon)R_0,2(1+\varepsilon)R_0)}\frac{|w_t(x,s)|^2}{s}dxds 
    \\
    &\qquad + \int_{(1-\varepsilon)R_0}^{2(1+\varepsilon)R_0}\fint_{\hat{Q}\times((1-\varepsilon)R_0,2(1+\varepsilon)R_0)}|\partial_x v_1(x,t^2,s)|^2sdxdtds 
    \\
    &\qquad + \int_{(1-\varepsilon)R_0}^{2(1+\varepsilon)R_0}\fint_{\hat{Q}\times((1-\varepsilon)R_0,2(1+\varepsilon)R_0)}|\partial_x v_2(x,t^2,s)|^2sdxdtds.
\end{align*}
By \reflemma{lemma:UniformBoundOnM[nablaphi^s]} and \reflemma{lemma:thetaPointwiseBoundOnGoodSetF} we obtain
\begin{align*}
    &\int_{(1-\varepsilon)R_0}^{2(1+\varepsilon)R_0}\fint_{\hat{Q}\times((1-\varepsilon)R_0,2(1+\varepsilon)R_0)}|\partial_x v_1(x,t^2,s)|^2sdxdtds
    \\
    &\qquad\lesssim \int_{(1-\varepsilon)R_0}^{2(1+\varepsilon)R_0}\fint_{\hat{Q}\times((1-\varepsilon)R_0,2(1+\varepsilon)R_0)}|M[\partial_s\partial_x \phi^s]|^2sdxdtds
    \\
    &\qquad\lesssim \fint_{\hat{Q}\times ((1-\varepsilon)R_0,2(1+\varepsilon)R_0)}\sup_{\hat{Q}\times((1-\varepsilon)R_0,2(1+\varepsilon)R_0)}|\partial_s\AP|^2s dxds
    \\
    &\qquad\qquad + \fint_{\hat{Q}\times ((1-\varepsilon)R_0,2(1+\varepsilon)R_0)}\fint_\Delta |\partial_s \AP|^2sdx,
\end{align*}
whence
\begin{align*}
    &\sum_{k\leq k_0}\sum_{Q\in \mathcal{D}_k(\Delta)}\int_{\hat{Q}\times ((1-\varepsilon)R_0,2(1+\varepsilon)R_0)}\fint_{(1-\varepsilon)2^{-k}}^{(1+\varepsilon)2^{-k+1}}|\partial_x v_1(x,t^2,s)|^2sdtdxds
    \\
    &\qquad \lesssim \int_{5\Delta}\sup_{B(x,s,s/2)}|\partial_s\AP|^2sdxds\lesssim |\Delta|.
\end{align*}
Furthermore, we can refer to the proof of \eqref{lemma:w_s^2SqFctBound2} in \reflemma{lemma:SqFctBoundsForw_s^2} to see that
\begin{align*}
    \sum_{k\leq k_0}\sum_{Q\in \mathcal{D}_k(\Delta)}\int_{\hat{Q}\times ((1-\varepsilon)R_0,2(1+\varepsilon)R_0)}\fint_{(1-\varepsilon)2^{-k}}^{(1+\varepsilon)2^{-k+1}}|\partial_x v_2(x,t^2,s)|^2sdtdxds\lesssim |\Delta|.
\end{align*}

Hence 
\begin{align*}
    \sum_{k\leq k_0}\sum_{Q\in \mathcal{D}_k(\Delta)}|Q|\sup_{Q\times (2^{-k},2^{-k+1}]}|w_t|^2\lesssim \int_{3\Delta}\frac{|w_t(x,s)|^2}{s}dxds + |\Delta|
    \lesssim |\Delta|.
    \end{align*}
\end{proof}

\section{\(L^2\) estimates for square functions}

\begin{lemma}\label{lemma:L^2estimatesForSquareFunctions}
Assuming the \(L^1\) Carleson condition \eqref{condition:L^1Carlesontypecond} we have the following estimates:
\begin{enumerate}[(i)]
    \item \[\int_{T(\Delta)}|\nabla \partial_s \rho(x,s)|^2 sdxds\lesssim |\Delta|;\]
    \item \[\int_{T(\Delta)}|L^s \rho(x,s)|^2s dxds\lesssim |\Delta|;\]
    \item \[\int_{T(\Delta)}|\partial_s A(x,s)\nabla\rho(x,s)|^2s dxds\lesssim |\Delta|;\]
    \item \[\int_{T(\Delta)}\frac{|\partial_s \theta(x,s)|^2}{s}dxds\lesssim |\Delta|;\]
    \item \[\int_{T(\Delta)}\frac{|\theta(x,s)|^2}{s^3}dxds\lesssim |\Delta|.\]
\end{enumerate}
\end{lemma}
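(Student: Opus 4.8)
The plan is to reduce every one of the five estimates to the square function bounds already obtained for the four pieces $\partial_s\phi^s$, $w_s^{(1)}$, $w_s^{(2)}$ and $w_t$ in \reflemma{lemma:SqFctBoundsForpartial_sPhiAndw_s^1}, \reflemma{lemma:w_s^2/nablaw_s^2spatialL^2Bound}, \reflemma{lemma:SqFctBoundsForw_s^2} and \reflemma{lemma:SqFctBoundsForw_t}. The two identities that make this possible are immediate from the definitions of $\rho_\eta$, $\theta_\eta$, $w_t$, $w_s^{(1)}$ and $w_s^{(2)}$: since $\theta_\eta=\phi^s-\rho_\eta$ and $\partial_s\theta_\eta=\partial_s\phi^s-w_t-w_s^{(1)}-w_s^{(2)}$, one has
\[\partial_s\rho_\eta=w_t+w_s^{(1)}+w_s^{(2)},\qquad L^s\rho_\eta=\frac{1}{2\eta^2 s}\,w_t.\]

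For (i) I would expand $\partial_x\partial_s\rho_\eta=\partial_x w_t+\partial_x w_s^{(1)}+\partial_x w_s^{(2)}$, write $|\partial_x w|^2 s=|s\,\partial_x w|^2/s$, and apply \eqref{lemma:w_tSqFctBound2}, \eqref{lemma:w_s^1SqFctBound2} and \eqref{lemma:w_s^2SqFctBound2} to the three terms. Estimate (ii) is immediate from $L^s\rho_\eta=(2\eta^2 s)^{-1}w_t$ and \eqref{lemma:w_tSqFctBound1}. For (iii) I would decompose $3\Delta$, at each fixed height $s$, into boundary balls $Q$ of radius $\approx s$ with bounded overlap; \reflemma{lemma:thetaPointwiseBoundOnGoodSetF} gives $\fint_Q|\partial_x\rho_\eta(\cdot,s)|^2\lesssim\kappa_0^2$, hence
\[\int_{3\Delta}|\partial_s A(x,s)\,\partial_x\rho_\eta(x,s)|^2\,dx\lesssim\kappa_0^2\sum_Q\sup_Q|\partial_s A(\cdot,s)|^2\,|Q|\lesssim\kappa_0^2\int_{3\Delta}\sup_{B(x,s,s/2)}|\partial_s A(y,s)|^2\,dx,\]
and after multiplying by $s$ and integrating in $s$ the $L^2$ Carleson condition \eqref{condition:L^2Carlesontypecond} finishes it. Estimate (iv) follows at once from $|\partial_s\theta_\eta|^2\lesssim|\partial_s\phi^s-w_s^{(1)}|^2+|w_t|^2+|w_s^{(2)}|^2$ and the bounds \eqref{lemma:w_s^1SqFctBound1}, \eqref{lemma:w_tSqFctBound1} and \eqref{lemma:w_s^2SqFctBound1}.

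The one part calling for a separate idea --- and the step I expect to be the delicate one --- is (v). The pointwise bound $|\theta_\eta(x,s)|\lesssim\kappa_0\eta s$ of \reflemma{lemma:thetaPointwiseBoundOnGoodSetF} is not enough by itself, since $\int_0^{l(\Delta)} s^{-1}\,ds$ diverges; instead (v) has to be deduced from (iv) by a one-dimensional Hardy-type argument exploiting $\theta_\eta(x,0)=0$. That same pointwise bound gives $\theta_\eta(x,s)\to 0$ as $s\to 0$, and (iv) together with Cauchy--Schwarz shows $\partial_s\theta_\eta\in L^1(T(\Delta))$, so $\theta_\eta(x,s)=\int_0^s\partial_\sigma\theta_\eta(x,\sigma)\,d\sigma$ for a.e.\ $x$. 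Cauchy--Schwarz then gives $|\theta_\eta(x,s)|^2\le s\int_0^s|\partial_\sigma\theta_\eta(x,\sigma)|^2\,d\sigma$, and Fubini with $\int_\sigma^{l(\Delta)} s^{-2}\,ds\le\sigma^{-1}$ yields
\[\int_{T(\Delta)}\frac{|\theta_\eta(x,s)|^2}{s^3}\,dxds\le\int_{T(\Delta)}\frac{|\partial_\sigma\theta_\eta(x,\sigma)|^2}{\sigma}\,dxd\sigma\lesssim|\Delta|\]
by (iv). The only care needed is the local absolute continuity of $s\mapsto\theta_\eta(x,s)$ on $(0,l(\Delta))$ and the passage to the limit at $s=0$, both routine given the explicit formula for $\partial_s\theta_\eta$. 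To summarize: the main obstacle is confined to organizing this Hardy step so the boundary term at $s=0$ vanishes, while (i)--(iv) are bookkeeping on top of the earlier square function lemmas.
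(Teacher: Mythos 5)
Your proof is correct and follows essentially the same strategy as the paper: parts (i), (ii), and (iv) reduce to the component square-function bounds via the identities $\partial_s\rho_\eta=w_t+w_s^{(1)}+w_s^{(2)}$ and $L^s\rho_\eta=(2\eta^2 s)^{-1}w_t$, part (iii) uses the local average bound $\fint_Q|\partial_x\rho_\eta|^2\lesssim\kappa_0^2$ from \reflemma{lemma:thetaPointwiseBoundOnGoodSetF} together with the $L^2$ Carleson condition, and part (v) is the Hardy-inequality reduction to (iv). For (iii) you are actually a bit more careful than the paper's displayed proof, which writes $\Vert\partial_s A(\cdot,s)\Vert_\infty$ before invoking \eqref{condition:L^2Carlesontypecond}; your Whitney-cube decomposition at each height $s$, replacing the global $L^\infty$ norm by the pointwise local supremum $\sup_{B(x,s,s/2)}|\partial_s A|$, is exactly what is needed to use the local Carleson condition honestly, and is the right way to read the paper's shorthand. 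For (v) the paper simply defers to Lemma 9 of \cite{hofmann_dirichlet_2022}; the Hardy computation you spell out (with $\theta_\eta(x,0)=0$, Cauchy--Schwarz, and Fubini with $\int_\sigma^{l(\Delta)}s^{-2}ds\le\sigma^{-1}$) is precisely that argument made explicit, so the substance is the same.
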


\begin{proof}

\begin{enumerate}[(i)]
    \item We split
    \begin{align*}
    \int_{T(\Delta)}|\nabla \partial_s \rho(x,s)|^2 sdxds&\leq \int_{T(\Delta)}|\nabla w_t(x,s)|^2 sdxds+\int_{T(\Delta)}|\nabla w_s^{(1)}(x,s)|^2 sdxds
    \\
    &\qquad+\int_{T(\Delta)}|\nabla w_s^{(2)}(x,s)|^2 sdxds,
    \end{align*}
    and apply \reflemma{lemma:SqFctBoundsForw_t}, \reflemma{lemma:SqFctBoundsForpartial_sPhiAndw_s^1} and \reflemma{lemma:SqFctBoundsForw_s^2} to get the required bound.
    
    \item We can observe that
    \[\int_{T(\Delta)}|L^s \rho(x,s)|^2s dxds=\int_{T(\Delta)}\frac{|w_t(x,s)|^2}{s} dxds\lesssim |\Delta|\]
    by \reflemma{lemma:SqFctBoundsForw_t}.

    \item We can calculate 
    \begin{align*}
    \int_{T(\Delta)}|\partial_s A(x,s)\nabla\rho(x,s)|^2s dxds&\leq \int_0^{l(\Delta)}\Vert\partial_s A(\cdot,s)\Vert_\infty\Big(\int_\Delta|\nabla\phi^s|^2+|\nabla e^{-\eta^2s^2L^s}\phi^s|^2dx\Big) s ds
    \\
    &\lesssim \int_0^{l(\Delta)}\Vert\partial_s A(\cdot,s)\Vert_\infty|\Delta|s ds \lesssim |\Delta|
    \end{align*}
    by \eqref{condition:L^2Carlesontypecond}.

    \item We split
    \begin{align*}
        \int_{T(\Delta)}\frac{|\partial_s \theta(x,s)|^2}{s}dxds&\leq \int_{T(\Delta)}\frac{|(\partial_s\phi^s-w_s^{(1)})(x,s)|^2}{s}dxds+\int_{T(\Delta)}\frac{|w_s^{(2)}(x,s)|^2}{s}dxds
        \\
        &\qquad+\int_{T(\Delta)}\frac{|w_t(x,s)|^2}{s}dxds
    \end{align*}
    and the bound follows from \reflemma{lemma:SqFctBoundsForpartial_sPhiAndw_s^1}, \reflemma{lemma:SqFctBoundsForw_s^2}, and \reflemma{lemma:SqFctBoundsForw_t}.

    \item The proof works analogously to the proof of Lemma 9 in \cite{hofmann_dirichlet_2022}.
\end{enumerate}
\end{proof}

\bibliographystyle{alpha}
\bibliography{references} 
\end{document}